\providecommand\@dotsep{5}
\def\listtodoname{List of Todos}
\def\listoftodos{\@starttoc{tdo}\listtodoname}
\numberwithin{equation}{section}
\def\Xint#1{\mathchoice 
  {\XXint\displaystyle\textstyle{#1}}% 
  {\XXint\textstyle\scriptstyle{#1}}% 
  {\XXint\scriptstyle\scriptscriptstyle{#1}}% 
  {\XXint\scriptscriptstyle\scriptscriptstyle{#1}}% 
  \!\int} 
\def\XXint#1#2#3{{\setbox0=\hbox{$#1{#2#3}{\int}$} 
  \vcenter{\hbox{$#2#3$}}\kern-.5\wd0}} 
\def\-int{\Xint -}
\newcommand{\R}{\mathbb{R}}
\newcommand{\N}{\mathbb{N}}
\newcommand{\ri}{\rightarrow}
\newcommand{\dis}{\displaystyle}
\DeclareMathOperator{\dive}{div}
\DeclareMathOperator{\B}{\mathcal{B}}
\DeclareMathOperator{\C}{\mathcal{C}}
\DeclareMathOperator{\A}{\mathcal{A}}
\DeclareMathOperator{\h}{\mathcal{H}}
\DeclareMathOperator{\w}{\rm{w}}
\DeclareMathOperator{\V}{\mathcal{\varphi}}
\DeclareMathOperator{\e}{\varepsilon}
\DeclareMathOperator{\X}{\mathbb{X}}
\DeclareMathOperator{\Y}{\mathbb{Y}}
\DeclareMathOperator{\W}{\mathbb{W}}
\DeclareMathOperator{\Z}{\mathbb{Z}}
\newtheorem{lem}{Lemma}[section]
\newtheorem{thm}{Theorem}[section]
\newtheorem{defn}{Definition}[section]
\newtheorem{cor}{Corollary}[section]
\newtheorem{remark}{Remark}[section]
\newtheorem{assumption}[thm]{Assumption}
\begin{document}
\title[Partial regularity result for non-autonomous elliptic systems with general growth]{Partial regularity result for non-autonomous elliptic systems with general growth}

\author[Teresa Isernia]{Teresa Isernia}
\address{Teresa Isernia \hfill\break\indent
Dipartimento di Ingegneria Industriale e Scienze Matematiche \hfill\break\indent
Universit\`a Politecnica delle Marche\hfill\break\indent
via brecce bianche 12, 60131 Ancona, Italy}
\email{t.isernia@univpm.it}
\author{Chiara Leone}
\address{}
\author{Anna Verde}
\address{Chiara Leone and Anna Verde \hfill\break\indent
Dipartimento di Matematica e Applicazioni "R. Caccioppoli" \hfill\break\indent
         Universit\`a degli Studi di Napoli Federico II \hfill\break\indent
         via Cinthia, 80126 Napoli, Italy}
\email{chileone@unina.it, anverde@unina.it}

\keywords{Elliptic systems; general growth; partial regularity.}
\subjclass[2010]{35J47, 35B65, 46E30}

%\keywords{$\V$-caloric functions; parabolic systems; Moser iteration; general growth.}
%\subjclass[2010]{35B65, 49N60, 35K40, 46E30}

\begin{abstract}
In this paper we prove a H\"older partial regularity result for weak solutions $u:\Omega\to \mathbb{R}^N$, $N\geq 2$, to non-autonomous elliptic systems with general growth of the type:
\begin{equation*}
-\dive a(x, u, Du)= b(x, u, Du) \quad \mbox{ in } \Omega. 
\end{equation*}
The crucial point is that the operator $a$ satisfies very weak regularity properties and a general growth, while the inhomogeneity $b$ has a controllable growth. 
%Under these assumptions we prove H\"older continuity of $u$ for any H\"older exponent $\alpha\in (0,1)$ outside a set of measure zero.
\end{abstract}

\maketitle

\section{Introduction}

\noindent
In this paper we are concerned with the partial H\"older continuity of weak solutions to elliptic systems with general growth. More precisely we consider
\begin{equation}\label{P}
-\dive a(x, u, Du)= b(x, u, Du) \quad \mbox{ in } \Omega, 
\end{equation}
where $\Omega \subset \R^{n}$ is a bounded open set of dimension $n\geq 2$, the structure function $a:\Omega \times \R^{N}\times \R^{nN}\ri \R^{nN}$ satisfies ellipticity and growth conditions in terms of Orlicz functions, while the inhomogeneity $b:\Omega \times \R^{N}\times \R^{nN}\ri \R^{N}$ fulfills a controllable growth condition (see Subsection \ref{hpoperatore}). 

\noindent The role of the ``coefficients'' $(x,u)$ of the function $a$ and the Orlicz growth will be the focus of this paper. We will suppose that the function $a$ satisfies a VMO-condition with respect to $x$ and a continuity with respect to $u$ (for the precise assumptions on $a$ we refer to Subsection \ref{hpoperatore}). In this regard, let us emphasize that partial H\"older continuity is the best one can expect.

\noindent Partial regularity of weak solutions to elliptic systems with $p$-growth has been extensively studied. In fact, in light of the pioneering works of Giusti-Miranda \cite{GM} and Morrey \cite{Morrey}, the question regarding conditions on the function $a$ in order to  obtain partial regularity has been for long investigated.
%which are the natural counterparts of ones for  equations. 
In the case of power growth, allowing H\"older continuity of $a$ with respect to the first two variables, it is well known that the gradients of weak solutions are H\"older continuous outside a set of measure zero (see \cite{DG} and the related references). Assuming only a continuity in $(x,u)$, in view of the established results in the scalar case, the natural expectation is that $u$ is partially H\"older continuous: this has been proven  in \cite{FM}. 
Later, these results were generalized to the case where the coefficients satisfy only a VMO-condition in $x$ (hence possibly discontinuous) combined with a continuity assumption with respect to the second variable (see \cite{BDHS, BJDE}). Actually many authors contributed with interesting results prior to these, including \cite{KM1,KM2, KM3, RT1,DES,Z}. 

%We refer to for a more comprehensive discussion of the background literature.
%(see \cite{DES, BDHS, RT1, B}).

\noindent The main motivation of this paper has been to transfer these results in the setting of the Orlicz growth, thus including the full case $1<p<\infty$. In fact, in the quoted papers, the used techniques distinguish the case $1<p\le 2$ and $p>2$.
 
\noindent Problems with general growth conditions, more precisely Orlicz growth, were first studied in \cite{Lieberman, M1, M2, M3} and later in \cite{BV, DE, DLSV, DSV, DSV1, ILV} (see also the references therein for other regularity results in this direction). 

\noindent More recently, starting from the study of autonomous quasi-convex functionals with Orlicz growth in \cite{DLSV},  the case of non-autonomous quasi-convex functionals has been considered in \cite{CO, GSS} (for H\"older and VMO-coefficients, respectively).
In \cite{ok}  the case of homogeneous non-autonomous  systems has been studied, but considering only the case of super-quadratic Orlicz functions in a non-degenerate situation. 
Actually our approach works for arbitrary Orlicz functions, for both the non-degenerate and the degenerate case and it especially works for the full range $1<p<\infty$.

%On the other hand, concerning vectorial functionals, the case of non-autonomous quasi-convex functionals with $p-$growth have been investigated by Foss and Mingione in \cite{FM}, see also \cite{RT}. 
%Coming back to the case of elliptic systems, 

\noindent Moreover we  allow the presence of a forcing term $b$, for which we consider a controllable growth condition (see $(H_7)$ in the subsection 2.2) (see also \cite{Str}).

\noindent 
%We point out that partial H\"older continuity is the best one can expect under such weak assumptions concerning the regularity of the structural function $a$ in the $(x, u)$-variables, see \cite{DES}.
Normally, the proof of the regularity results for elliptic systems were based on a blow-up technique, but another efficient and by now well established approach for proving partial regularity is based on the so called $\mathcal A$-harmonic approximation method. 
This technique, that originates from De Giorgi \cite{degiorgi} (see also the work of Simon \cite{Simon}),
 has been successfully applied  to obtain partial regularity results for general elliptic systems in a series of papers \cite{DLSV, DG, DGK, DK, DM, DM1}. 
  
 \noindent More precisely, let us consider  a bilinear form $\mathcal A$ on $\mathbb{R}^{nN}$, which is strongly elliptic in the sense of Legendre-Hadamard, with ellipticity constant $\nu > 0$ and upper bound $\Lambda$; that is:   
$$
\nu|\xi|^2|\tilde\xi|^2\le{\mathcal A}(\xi\otimes{ \tilde\xi},\xi\otimes{ \tilde\xi})\leq \Lambda|\xi|^2|{\tilde\xi}|^2 \ \ \ \ \forall\xi\in \mathbb{R}^{n}, {\tilde\xi}\in \mathbb{R}^{N}.
$$
Let us recall that $h$ is called $\mathcal A$-harmonic on a ball $\B_r$ if
$$
\int_{\B_r}{\mathcal A}(Dh,D\eta)dx=0, \ \ \ \forall \eta\in \C_0^\infty(\B_r,\R^N).
$$
The method of $\mathcal A$-harmonic approximation, in the quadratic case, consists in obtaining a good approximation of functions $u \in W^{1,2}(\B_r,\R^N)$ on the ball $\B_r$, which are almost $\mathcal A$-harmonic (in the sense of Theorem \ref{approxdlsv}), by ${\mathcal A}$-harmonic functions $h \in W^{1,2}(\B_r,\R^N)$, in both the $L^2$-topology and in the weak topology of $W^{1,2}$. Thanks to an extension of this technique to the Orlicz setting  it is possible to arrive  to the partial H\"older continuity of the gradients of minimizers to autonomous variational integrals (see \cite{DLSV}). This is obtained proving a
decay estimate for an excess functional which measures in an appropriate sense the oscillations of the gradient of the solution to the problem.

\vskip 0,1cm

\noindent In order to prove our result, we will follow this  $\mathcal A$-harmonic approach in the Orlicz setting.

\noindent
 First of all we will prove a Caccioppoli-type inequality for $u-\ell$ where $u$ is our weak solution and $\ell$ is  a general 
 affine function. In \cite{DLSV}, combining Caccioppoli and Sobolev-Poincar\'e inequalities, it is possible to obtain a higher integrability result that is  fundamental if we want to apply the  $\mathcal A$-harmonic approximation lemma (in the form of  Theorem 14 in \cite{DLSV}). Here the problem is much more complicated due to our Caccioppoli inequality, which takes into account an additional term deriving from our structural assumptions on the function $a$ in the $(x,u)$-variables.  To overcome this difficulty we fully investigated the $\mathcal A$-harmonic approximation lemma of \cite{DLSV}, discovering another useful implication, in terms of the closeness of the functions (see Corollary \ref{coro} below). 
%To overcome this difficulty we are able to prove a slightly modified version of the $\mathcal A$-harmonic approximation lemma of \cite{DLSV}. 
This allowed us to compare  $u-\ell_r$, where $\ell_r$ is the unique affine quasi minimizer of $\dis{\ell\to\-int_{\B_r}|u-\ell |\,dx}$,  with its $\mathcal A$-harmonic approximation $h$, arriving to a suitable decay estimate for an excess functional, that measures the oscillations of $u$ and it is therefore quite appropriate to produce a partial H\"older continuity of $u$. In this manner we avoided  to prove the higher integrability of $Du-D\ell_r$ (with a suitable estimate) which would have necessitated considerably more care, and thus we simplified our proof .

\noindent In this sense our proof differs from the one contained in \cite{GSS} (dealing with minimizers of non-autonomous quasiconvex integrals with VMO-coefficients in the Orlicz setting). We both rely upon comparisons to solutions of linearized systems to establish the decay of some excess functional, in \cite{GSS} it is an ``hybrid'' excess functional that describes both the oscillations of the gradient $Du$ and the oscillations of the function map $u$ and this requires to deal with more technical problems arising when considering the particular form of this excess, as the above mentioned higher integrability of $Du-D\ell_r$.

%This method facilitates a rapid and elegant implementation of the linearization techniques required for partial regularity, and in our case allows us to easily by-pass certain technical problems arising when dealing with the particular form
%without using a higher integrability result of $Du-D\ell$ which seems and 
%without using an estimate of higher integrability of Du ? Dl which seems much more involved since the adding term in our Caccioppoli inequality. As a consequence we are able to arrive to a suitable decay estimate for the first-order excess functional, that measures the oscillations of u and it is seems therefore more natural to produce a partial Hšlder continuity of u with respect to the excess functional in terms of the gradients as in [18].

%General functionals with VMO-coefficients were considered by Ragusa and Tachikawa [...], who generalized the low-dimensional results mentioned above from problems with continuous coefficients to the case of VMO-coefficients. In particular, these results hold true only under a strong smallness condition for the dimension. In contrast, our results, which we state in the following sections, are valid in any dimension. However, we get only partial Ho?lder continuity on an open subset of ? with full measure, while the results mentioned before yield also a quantitative estimate for the Hausdorff dimension of the singular set, but only in low dimensions.

\section{Assumptions and main result} \label{sect2} 
% and $u:\Omega \ri \R^{N}$ with $N\geq 1$. The novelty of the present paper is the fact that we are able to deal with coefficients $a:\Omega \times \R^{N}\times \R^{nN}\ri \R^{nN}$ that satisfy a VMO-condition with respect to $x$ and are continuous with respect to $u$. 
\noindent
In this section we first provide the notion of $N$-functions, then we give the set of hypotheses and finally we state our main regularity result. 

\subsection{$N$-functions}

We begin recalling the notion of $N$-functions (see \cite{raoren}).

\noindent
We shall say that two real functions $\varphi_1$ and $\varphi_2$ are {\it equivalent}, and we will write $\varphi_1\sim\varphi_2$, if there exist positive constants $c_{1}$ and $c_{2}$ such that $c_{1}\varphi_1(t)\leq \varphi_2(t)\leq c_{2}\varphi_1(t)$ for any $t\ge 0$. 
\begin{defn}\label{defV}
A real function $\varphi: [0, \infty)\rightarrow [0, \infty)$ is said to be an $N$-function if  $\varphi(0)=0$ and 
there exists a right continuous nondecreasing derivative $\V'$ satisfying $\varphi'(0)=0$,
$\varphi'(t)>0$ for $t>0$ and $\displaystyle{\lim_{t\rightarrow \infty} \varphi'(t)=\infty}$. Especially $\varphi$ is convex. 
\end{defn}

\noindent
%The assumption widely used in order to study regularity for systems with Orlicz growth is the following. 
We say that $\varphi$ satisfies the $\Delta_{2}$-condition (we shall write $\V\in \Delta_{2}$) if there exists a constant $c>0$ such that  
$$
\varphi(2t)\leq c\,\varphi(t) \quad \mbox{ for all } t\geq 0.
$$
We denote the smallest possible constant by $\Delta_{2}(\varphi)$.

\noindent
Since $\varphi(t)\le\varphi(2t)$, the $\Delta_{2}$-condition implies that $\varphi(2t)\sim \varphi(t)$. From now on, we will always denote by $\V$ an $N$-function that satisfies the $\Delta_{2}$-condition. 

\noindent
The conjugate function $\V^{*}:[0, +\infty) \ri [0, +\infty)$ of $\V$ is the function defined by 
\begin{align*}
\V^{*}(t) := \sup_{s\geq 0}\, [st -\V(s)], \quad t\geq 0. 
\end{align*}
Then, also $\V^{*}$ is a $N$-function. If $\V, \V^{*}$ satisfy the $\Delta_{2}$-condition we will write that $\Delta_{2}(\V, \V^{*})<\infty$. 
Assume that $\Delta_{2}(\V, \V^{*})<\infty$. Then for all $\delta>0$ there exists $c_{\delta}$ depending only on $\Delta_{2}(\varphi, \varphi^{*})$ such that
for all $s,t\geq 0$ it holds the Young's inequality
\begin{align*}
&t\, s\leq \delta \, \varphi(t)+c_{\delta} \, \varphi^{*}(s). 
\end{align*}

\begin{comment}
\begin{defn}
We say that an $N$-function $\varphi$ is of type $(p_{0}, p_{1})$ with $1\leq p_{0}\leq p_{1}<\infty$ if
\begin{equation}\label{max}
\varphi(st)\leq C \max\{s^{p_{0}}, s^{p_{1}}\}\varphi(t) \quad \forall s,t\geq 0.
\end{equation} 
\end{defn}

\noindent
The following Lemma can be found in \cite{DLSV} (see Lemma 5).

\begin{lem}\label{p0p1}
Let $\V$ be an $N$-function with $\V\in\Delta_2$ together with its conjugate. Then $\V$ is of type $(p_{0}, p_{1})$ with $1< p_{0}<p_{1}<\infty$, where $p_0$ and $p_1$ and the constant $C$ depend only on $\Delta_2(\V,\V^*)$.
\end{lem}
\end{comment}
%\begin{remark}\label{remp0p1}

\noindent Throughout the paper we will assume that $\V$ satisfies the following 
.

\begin{assumption}\label{assumption} 
 Let $\V\in \C^{1}([0, \infty))\cap \C^{2}(0, \infty)$ be an $N$-function satisfying
\begin{align}\label{assnew}
0<p_{0}-1 \leq \inf_{t>0} \frac{t\V''(t)}{\V'(t)}\leq \sup_{t>0} \frac{t\V''(t)}{\V'(t)}\leq p_{1}-1, 
\end{align} 
with $1<p_{0}\leq p_{1}$. Without loss of generality we can assume that $1<p_{0}<2<p_{1}$. 
\end{assumption}

\noindent
Under this assumption on $\V$ it follows from Proposition 2.1 in \cite{CO} that $\Delta_2(\V,\V*)<\infty$ and 
\begin{align}\label{CO1}
p_{0}\leq \inf_{t>0} \frac{t\V'(t)}{\V(t)}\leq \sup_{t>0} \frac{t\V'(t)}{\V(t)}\leq p_{1}. 
\end{align}

\noindent
We point out that the following inequalities holds for every $t\geq 0$:  
\begin{align}\begin{split}\label{t1}
&s^{p_{1}} \V(t) \leq \V(st) \leq s^{p_{0}} \V(t) \quad \mbox{ if } 0< s \leq 1, \\
&s^{p_{0}} \V(t) \leq \V(st) \leq s^{p_{1}} \V(t) \quad \mbox{ if } s \geq 1,
\end{split}\end{align}
as well
\begin{align}\begin{split}\label{t1conj}
&s^{\frac{p_0}{p_0-1}} \V^*(t) \leq \V^*(st) \leq s^{\frac{p_{1}}{p_1-1}} \V^*(t) \quad \mbox{ if } 0< s \leq 1, \\
&s^{\frac{p_{1}}{p_1-1}} \V^*(t) \leq \V^*(st) \leq s^{\frac{p_{0}}{p_0-1}} \V^*(t) \quad \mbox{ if } s \geq 1,
\end{split}\end{align}
and also 
\begin{align}\begin{split}\label{t2}
&s^{p_{1}-1} \V'(t) \leq \V'(st) \leq s^{p_{0}-1} \V'(t) \quad \mbox{ if } 0< s \leq 1, \\
&s^{p_{0}-1} \V'(t) \leq \V'(st) \leq s^{p_{1}-1} \V'(t) \quad \mbox{ if } s \geq 1. 
\end{split}\end{align}
%\end{remark}
In particular, for $t>0$ we have 
\begin{align}\label{t3}
\V(t)\sim t\V'(t), \quad \V'(t)\sim t\V''(t), \quad \V^{*}(\V'(t))\sim \V^{*}\left(\frac{\V(t)}{t}\right) \sim \V(t). 
\end{align}
For the inverse function $\V^{-1}$ and for $\V' \circ \V^{-1}$ we have the following estimates:
\begin{align}\begin{split}\label{t4}
&s^{\frac{1}{p_{0}}} \V^{-1}(t)\leq \V^{-1}(st) \leq s^{\frac{1}{p_{1}}} \V^{-1}(t), \quad \mbox{ for } 0<s\leq 1\\
&s^{\frac{1}{p_{1}}} \V^{-1}(t)\leq \V^{-1}(st) \leq s^{\frac{1}{p_{0}}} \V^{-1}(t), \quad \mbox{ for } s\geq 1
\end{split}\end{align}
and 
\begin{align}\begin{split}\label{t5conj}
&s^{1-\frac{1}{p_{1}}} (\V^*)^{-1}(t)\leq (\V^*)^{-1}(st) \leq s^{1-\frac{1}{p_{0}}} (\V^*)^{-1}(t), \quad \mbox{ for } 0<s\leq 1\\
&s^{1-\frac{1}{p_{0}}} (\V^*)^{-1}(t)\leq (\V^*)^{-1}(st) \leq s^{1-\frac{1}{p_{1}}} (\V^*)^{-1}(t), \quad \mbox{ for } s\geq 1
\end{split}\end{align}
%\begin{align}\begin{split}\label{t5}
%&\left(\frac{p_{0}}{p_{1}}\right) s^{1-\frac{1}{p_{1}}}\V'(\V^{-1}(t)) \leq \V'(\V^{-1}(st))\leq \left(\frac{p_{1}}{p_{0}}\right) s^{1-\frac{1}{p_{0}}}\V'(\V^{-1}(t)) \quad \mbox{ for } 0<s\leq 1\\
%&\left(\frac{p_{1}}{p_{0}}\right) s^{1-\frac{1}{p_{0}}}\V'(\V^{-1}(t)) \leq \V'(\V^{-1}(st))\leq \left(\frac{p_{0}}{p_{1}}\right) s^{1-\frac{1}{p_{1}}}\V'(\V^{-1}(t)) \quad \mbox{ for } s\geq 1.
%\end{split}\end{align}

\noindent
Now, we consider a family of $N$-functions $\{\varphi_{a}\}_{a\geq 0}$ setting, for  $t\geq 0$,
\begin{equation*}\label{phia}
\varphi_{a}(t):=\int_{0}^{t} \varphi'_{a}(s) \, ds \quad \mbox{ with } \quad 
\varphi'_{a}(t):= \varphi'(a+t) \frac{t}{a+t}. 
\end{equation*}
\noindent
The following lemma can be found in \cite{DE} (see Lemma 23 and Lemma 26).

\begin{lem}\label{phia}
Let $\V$ be an $N$-function with $\V\in\Delta_2$ together with its conjugate. Then for all  $a\geq 0$ the function $\varphi_a$ is an $N$-function and $\{\varphi_{a}\}_{a\geq 0}$ and $\{(\varphi_{a})^{*}\}_{a\geq 0}\sim\{\varphi^*_{\varphi'(a)}\}_{a\geq 0}$ satisfy the $\Delta_{2}$ condition
uniformly in $a\geq 0$.
\end{lem}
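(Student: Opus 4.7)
The plan is to verify the three assertions of the lemma in turn: that $\varphi_a$ satisfies Definition~\ref{defV}, that $\{\varphi_a\}_{a\ge 0}$ is $\Delta_2$ uniformly in $a$, and that $(\varphi_a)^{*}\sim \varphi^{*}_{\varphi'(a)}$ with uniform $\Delta_2$ constant.

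First, I would check the $N$-function conditions directly from the definition. One has $\varphi_a(0)=0$, while the right-continuous derivative $\varphi'_a(t)=\varphi'(a+t)\,t/(a+t)$ vanishes at $t=0$, is strictly positive for $t>0$, and tends to $\infty$ as $t\to\infty$ (since $\varphi'(a+t)\to\infty$ and $t/(a+t)\to 1$). Monotonicity is immediate because $\varphi'(a+\cdot)$ and $t\mapsto t/(a+t)$ are both nonnegative and nondecreasing on $[0,\infty)$, whence their product is nondecreasing and $\varphi_a$ is convex.

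For the uniform $\Delta_2$ property, the central step is the equivalence $\varphi_a(t)\sim t\,\varphi'_a(t)$ with constants depending only on $p_0,p_1$. The upper bound is trivial from monotonicity; for the lower bound, $\varphi_a(t)\ge (t/2)\,\varphi'_a(t/2)$, so it suffices to control $\varphi'_a(t)/\varphi'_a(t/2)$. Since $a+t\le 2(a+t/2)$, \eqref{t2} gives $\varphi'(a+t)\le 2^{p_1-1}\varphi'(a+t/2)$, while elementary algebra yields $t/(a+t)\le 2\cdot (t/2)/(a+t/2)$; multiplying these bounds produces a reverse doubling $\varphi'_a(t)\le 2^{p_1}\varphi'_a(t/2)$ uniform in $a$. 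The $\Delta_2$ inequality $\varphi_a(2t)\le C\varphi_a(t)$ then follows by the same chain, bounding $\varphi_a(2t)\le 2t\,\varphi'_a(2t)$ and $\varphi'_a(2t)\le C\varphi'_a(t)$ via \eqref{t2}.

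For the conjugate, I would exploit the duality identity $\psi^{*}(\psi'(t))\sim \psi(t)$ from \eqref{t3}, applied to both $\psi=\varphi_a$ and $\psi=\varphi^{*}_{\varphi'(a)}$. Since $\varphi'_a$ is a strictly increasing continuous bijection of $[0,\infty)$ (Assumption~\ref{assumption} ensures $\varphi''>0$), setting $s=\varphi'_a(t)$ gives
\[
(\varphi_a)^{*}(s)\sim \varphi_a(t)\sim t\,\varphi'_a(t)\sim \varphi(a+t)\,\frac{t^2}{(a+t)^2},
\]
using $\varphi(r)\sim r\varphi'(r)$. The same recipe for the shifted $N$-function gives $\varphi^{*}_{\varphi'(a)}(s)\sim \varphi^{*}(\varphi'(a)+s)\,s^2/(\varphi'(a)+s)^2$. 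A two-regime analysis ($t\le a$ versus $t\ge a$), based on \eqref{t2}, shows $\varphi'(a)+s\sim \varphi'(a+t)$, after which \eqref{t3} converts $\varphi^{*}(\varphi'(a+t))$ into $\varphi(a+t)$; combining with $s/\varphi'(a+t)=t/(a+t)$ from the definition of $s$, the two expressions coincide up to constants depending only on $p_0,p_1$. This yields the global equivalence, and the uniform $\Delta_2$ property of $(\varphi_a)^{*}$ then transfers from that of $\varphi^{*}_{\varphi'(a)}$, which itself follows by applying the same argument to $\varphi^{*}$ (whose $\Delta_2$ constant is controlled by Assumption~\ref{assumption}).

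The main technical obstacle is the conjugate step: one has to track carefully the case split $t\lessgtr a$, since for $t\ll a$ the derivative $\varphi'_a$ is essentially linear with slope $\sim\varphi'(a)/a\sim\varphi''(a)$, while for $t\gg a$ it inherits the superlinear growth of $\varphi'$; the delicate point is that the shift by $\varphi'(a)$ built into $\varphi^{*}_{\varphi'(a)}$ exactly matches this transition, and that the resulting equivalence constants depend only on $\Delta_2(\varphi,\varphi^{*})$, i.e.\ on $p_0$ and $p_1$, and not on $a$.
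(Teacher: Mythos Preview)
The paper does not prove this lemma at all; it simply refers to Lemmas~23 and~26 of Diening--Ettwein \cite{DE}. Your proposal is therefore an independent verification rather than a reconstruction of the paper's argument, and the overall strategy---direct check of Definition~\ref{defV}, doubling of $\varphi'_a$, then matching $(\varphi_a)^*$ with $\varphi^{*}_{\varphi'(a)}$ through the Legendre relation---is the natural one and in spirit what is carried out in \cite{DE}.

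There is, however, a genuine gap in your conjugate step. You invoke $\psi^{*}(\psi'(t))\sim\psi(t)$ from \eqref{t3} with $\psi=\varphi_a$, but that equivalence requires $\Delta_2(\psi,\psi^{*})<\infty$; at this stage you have only established $\varphi_a\in\Delta_2$, not $(\varphi_a)^{*}\in\Delta_2$, so the reasoning is circular. Concretely, your Step~2 yields $c_1\,t\varphi'_a(t)\le\varphi_a(t)\le t\varphi'_a(t)$ with upper constant equal to~$1$, and then the exact identity $(\varphi_a)^{*}(\varphi'_a(t))=t\varphi'_a(t)-\varphi_a(t)$ provides only the trivial lower bound~$0$. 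The missing ingredient is a growth estimate $\varphi'_a(2t)\ge C\varphi'_a(t)$ with $C>1$ uniform in $a$ (equivalently, $\varphi_a\in\nabla_2$ uniformly). Under Assumption~\ref{assumption} this follows by the same decomposition you used for the upper bound: setting $\lambda=(a+2t)/(a+t)\in(1,2]$ and applying \eqref{t2} gives
\[
\frac{\varphi'_a(2t)}{\varphi'_a(t)}=\frac{\varphi'(\lambda(a+t))}{\varphi'(a+t)}\cdot\frac{2}{\lambda}\ \ge\ 2\,\lambda^{p_0-2}\ \ge\ 2^{\,p_0-1}>1.
\]
With this in hand one obtains $\varphi_a(t)\le c_2\,t\varphi'_a(t)$ with $c_2<1$, the Legendre identity then gives $(\varphi_a)^{*}(\varphi'_a(t))\sim\varphi_a(t)$ legitimately, and your two-regime comparison can proceed. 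A minor related remark: throughout you rely on Assumption~\ref{assumption} (for \eqref{t2} and for $\varphi''>0$), which is strictly stronger than the lemma's stated hypothesis $\Delta_2(\varphi,\varphi^{*})<\infty$; this is harmless for the paper's purposes since Assumption~\ref{assumption} is the standing hypothesis, but the result in \cite{DE} is proved under the weaker assumption.
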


\noindent
Let us observe that by the previous lemma $\varphi_{a}(t)\sim t\varphi'_{a}(t)$. Moreover, for $t\geq a$ we have $\varphi_{a}(t)\sim \varphi(t)$
and for $t\leq a$ we have $\varphi_{a}(t)\sim t^{2} \varphi''(a)$. This implies that $\varphi_{a}(st)\leq c s^{2} \varphi_{a}(t)$
for all $s\in [0,1]$, $a\geq 0$ and $t\in [0,a]$. 
In particular, the following relations hold uniformly with respect to $a\geq 0$
\begin{align}\begin{split}\label{t6}
&\V_{a}(t)\sim \V''(a+t) t^{2} \sim \frac{\V(a+t)}{(a+t)^{2}}t^{2} \sim \frac{\V'(a+t)}{a+t}t^{2}, \\
&\V(a+t)\sim \V_{a}(t) + \V(a). 
\end{split}\end{align}

\begin{remark}\label{phiap0p1}
It is easy to check that if $\V$ satisfies Assumption \ref{assumption}, the same is true for $\V_a$, uniformly with respect to $a\geq 0$ (with the same $p_0$ and $p_1$).
\end{remark}

\noindent
Next result is a slight generalization of Lemma $20$ in \cite{DE}. 
\begin{lem}\label{lem2}
Let $\varphi$ be an $N$-function with $\Delta_{2}(\{\varphi,\varphi*\})<\infty$; then, uniformly in $z_{1}, z_{2} \in \R^{nN}$  with $|z_{1}|+|z_{2}|>0$, and in $\mu\geq 0$, it holds
\begin{equation*}
 \frac{\varphi'(\mu+|z_{1}|+|z_{2}|)}{\mu +|z_{1}|+|z_{2}|} \sim \int_{0}^{1} \frac{\varphi'(\mu+|z_{\theta}|)}{\mu+|z_{\theta}|} d\theta, 
\end{equation*} 
where $z_{\theta}= z_{1} + \theta (z_{2}-z_{1})$ with $\theta \in [0,1]$.
\end{lem}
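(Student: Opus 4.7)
The plan is to reduce the statement to the already-known case $\mu=0$ by passing to the shifted $N$-function $\varphi_\mu$ introduced just before Lemma \ref{phia}. Recall that Lemma~20 of \cite{DE} asserts that for any $N$-function $\psi$ with $\Delta_2(\{\psi,\psi^*\})<\infty$ one has
$$\frac{\psi'(|z_1|+|z_2|)}{|z_1|+|z_2|}\sim \int_0^1\frac{\psi'(|z_\theta|)}{|z_\theta|}\,d\theta,$$
with equivalence constants depending only on $\Delta_2(\{\psi,\psi^*\})$. So the proof reduces to applying this statement to $\psi=\varphi_\mu$ and translating back to $\varphi$.

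The translation rests on the exact pointwise identity
$$\frac{\varphi'_\mu(t)}{t}=\frac{\varphi'(\mu+t)}{\mu+t},\qquad t>0,\ \mu\geq 0,$$
which is immediate from the definition $\varphi'_\mu(t)=\varphi'(\mu+t)\,t/(\mu+t)$. Using this identity at $t=|z_1|+|z_2|$ and, for a.e.\ $\theta\in[0,1]$, at $t=|z_\theta|$ (the restriction to a.e.\ $\theta$ is harmless since the affine map $\theta\mapsto z_\theta$ vanishes at most at a single point), the claim
$$\frac{\varphi'(\mu+|z_1|+|z_2|)}{\mu+|z_1|+|z_2|}\sim\int_0^1\frac{\varphi'(\mu+|z_\theta|)}{\mu+|z_\theta|}\,d\theta$$
is transformed into exactly
$$\frac{\varphi'_\mu(|z_1|+|z_2|)}{|z_1|+|z_2|}\sim\int_0^1\frac{\varphi'_\mu(|z_\theta|)}{|z_\theta|}\,d\theta,$$
which is Lemma~20 of \cite{DE} applied to the $N$-function $\varphi_\mu$.

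The only delicate point, and what I would highlight as the main step, is the uniformity of the equivalence constants with respect to $\mu\geq 0$. For this I invoke Lemma \ref{phia}: for every $\mu\geq 0$ the function $\varphi_\mu$ is an $N$-function and $\Delta_2(\{\varphi_\mu,(\varphi_\mu)^*\})<\infty$, with bounds depending only on $\Delta_2(\{\varphi,\varphi^*\})$. Consequently the equivalence constants furnished by Lemma~20 of \cite{DE} applied to $\varphi_\mu$ are independent of $\mu$, yielding the uniformity stated in the lemma. No new estimate beyond this uniform $\Delta_2$-property is needed, which is the sense in which this really is only a ``slight generalization'' of the version in \cite{DE}.
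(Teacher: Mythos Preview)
Your argument is correct: the pointwise identity $\varphi'_\mu(t)/t=\varphi'(\mu+t)/(\mu+t)$ converts the claim into Lemma~20 of \cite{DE} for $\varphi_\mu$, and Lemma~\ref{phia} supplies the uniform $\Delta_2$-bounds needed for the equivalence constants to be independent of $\mu$. The paper does not give its own proof of this lemma, only citing it as a slight generalization of the result in \cite{DE}; your reduction is precisely the natural one.
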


\noindent
We shall often use the function $V:\R^{nN}\rightarrow \R^{nN}$ defined by 
\begin{equation}\label{AVlambda1}
V(z) = \sqrt{\frac{\V'(|z|)}{|z|}}z.
\end{equation}
The monotonicity property of $\V$ ensures that 
\begin{align}\label{Vphi}
|V(z_{1})-V(z_{2})|^{2}\sim \V_{|z_{1}|}(|z_{1}-z_{2}|) \quad \mbox{ for any } z_{1}, z_{2} \in \R^{nN},  
\end{align}
and also $|V(z_{1})|^{2}\sim \V(|z_{1}|)$ uniformly in $z_{1}\in \R^{nN}$; see \cite{DE} for further properties about the $V$-function.

\noindent
Let $\V$ be an $N$-function that satisfies the $\Delta_{2}$-condition. The set of functions $L^{\V}(\Omega, \R^{N})$ is defined by
\begin{align*}
L^{\V}(\Omega, \R^{N})= \left\{ u: \Omega \ri \R^{N} \mbox{ measurable } : \, \int_{\Omega} \V(|u|)\, dx <\infty\right\}. 
\end{align*}
The Luxembourg norm is defined as follows:
\begin{align*}
\|u\|_{L^\varphi(\Omega, \R^{N})}=\inf \left\{\lambda>0 : \int_{\Omega} \varphi \left(\frac{|u(x)|}{\lambda} \right)\,dx\leq 1\right\}.
\end{align*}
With this norm $L^\varphi(\Omega, \R^{N})$ is a Banach space. 

\noindent
%We say that $u\in W^{1, \V}(\Omega)$ if and only if $u,  Du \in L^{\V}(\Omega)$. 
By $W^{1, \V}(\Omega, \R^{N})$ we denote the classical Orlicz-Sobolev space, that is $u\in W^{1, \V}(\Omega, \R^{N})$ whenever $u,  Du \in L^{\V}(\Omega, \R^{N})$.
Furthermore, by $W^{1,\V}_{0}(\Omega, \R^{N})$ we mean the closure of $\C^{\infty}_{c}(\Omega, \R^{N})$ functions with respect to the norm
\begin{align*}
\|u\|_{W^{1, \V}(\Omega, \R^{N})}=\|u\|_{L^{\V}(\Omega, \R^{N})}+\| Du\|_{L^{\V}(\Omega, \R^{N})}. 
\end{align*}

\noindent
The following version of the Sobolev-Poincar\'e inequality can be found in \cite{BV}. 
\begin{thm}\label{SP}
Let $\V$ be an $N$-function with $\Delta_{2}(\V, \V^{*})<\infty$ and let $\B_{r}\subset \R^{n}$. Then for every $1\le s< \frac{n}{n-1}$, there exists $c_p>0$ such that, for all $u\in W^{1, \V}(\B_{r}, \R^{N})$, it holds 
\begin{align*}
\left(\-int_{\B_{r}} \V^{s}\left( \frac{|u-(u)_{r}|}{r}\right)\, dx\right)^{\frac{1}{s}} \leq c_p \-int_{\B_{r}} \V(|Du|)\, dx.
\end{align*}
\end{thm}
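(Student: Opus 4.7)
My strategy is a two-step bootstrap from the classical $L^{1}$--$L^{n/(n-1)}$ Sobolev--Poincar\'e inequality.

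\emph{Step 1 (base case $s=1$).} I first establish
$$\-int_{\B_r}\V\!\left(\frac{|u-(u)_r|}{r}\right)dx \leq c\,\-int_{\B_r}\V(|Du|)\,dx, \qquad (\star)$$
starting from the pointwise Riesz representation $|u(x)-(u)_r|\leq c_n\int_{\B_r}|Du(y)|\,|x-y|^{1-n}\,dy$. Since $\int_{\B_r}|x-y|^{1-n}\,dy\lesssim r$ uniformly in $x\in\B_r$ (and symmetrically in $y$), the right-hand side is proportional to a probability-measure average of $|Du|$. Applying Jensen's inequality in $\V$ (valid thanks to $\V\in\Delta_2$ from Assumption \ref{assumption}) followed by Fubini yields $(\star)$. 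This is precisely the Orlicz--Poincar\'e inequality of \cite{BV}.

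\emph{Step 2 (chain rule and absorption).} Set $v(x):=\V(|u(x)-(u)_r|/r)$. Since $\V\in\C^{1}$ with $\V(0)=0$, the chain rule gives $v\in W^{1,1}(\B_r)$ with
$$|Dv(x)|\leq \V'\!\left(\frac{|u(x)-(u)_r|}{r}\right)\frac{|Du(x)|}{r}\quad\text{a.e.}$$
Apply the sharp $L^{1}$--$L^{n/(n-1)}$ Sobolev--Poincar\'e inequality to $v$ and then invoke Young's inequality $ab\leq \e\V^*(a)+c_\e\V(b)$ with $a=\V'(|u-(u)_r|/r)$, $b=|Du|$, together with the equivalence $\V^*(\V'(t))\sim\V(t)$ from \eqref{t3}, to bound the right-hand side by $\e\,\-int_{\B_r}\V(|u-(u)_r|/r)\,dx+c_\e\-int_{\B_r}\V(|Du|)\,dx$. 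Absorbing the first term through $(\star)$ gives
$$\left(\-int_{\B_r}|v-(v)_r|^{\frac{n}{n-1}}\,dx\right)^{\!\frac{n-1}{n}}\!\!\leq c\,\-int_{\B_r}\V(|Du|)\,dx. \qquad(\star\star)$$

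\emph{Step 3 (triangle and H\"older).} Since $v\geq 0$ and $(v)_r\leq c\,\-int_{\B_r}\V(|Du|)\,dx$ by $(\star)$, the triangle inequality in $L^{n/(n-1)}$ combined with $(\star\star)$ yields $\|v\|_{L^{n/(n-1)}(\B_r,\,dx/|\B_r|)}\leq c\,\-int_{\B_r}\V(|Du|)\,dx$. For $1\leq s<n/(n-1)$, the probability-space embedding $L^{n/(n-1)}\hookrightarrow L^{s}$ on $(\B_r,\,dx/|\B_r|)$ then produces the claim. The main obstacle I anticipate is the Young-type absorption in Step 2, which has to be executed with a constant depending only on $p_0,p_1,\Delta_2(\V,\V^*)$; this works precisely because of the sharp equivalence $\V^*(\V'(t))\sim\V(t)$ from \eqref{t3}, available here thanks to $\V,\V^*\in\Delta_2$, so that any weakening of Assumption \ref{assumption} would jeopardize the absorption.
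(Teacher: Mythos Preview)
The paper does not prove Theorem~\ref{SP}; it merely quotes the result from \cite{BV}. Your argument is correct and is, in fact, essentially the proof given there: one first obtains the modular Poincar\'e inequality $(\star)$ via the Riesz-potential representation and Jensen's inequality (using only convexity of~$\V$ together with $\V\in\Delta_2$ to absorb the dimensional constant), and then self-improves by applying the classical $L^1\!\to\!L^{n/(n-1)}$ Sobolev--Poincar\'e inequality to the scalar function $v=\V(|u-(u)_r|/r)$, controlling $r\!\-int|Dv|$ through Young's inequality and the equivalence $\V^{*}(\V'(t))\sim\V(t)$.

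Two minor remarks on presentation. First, in Step~2 there is no genuine absorption: the term $\e\!\-int_{\B_r}\V(|u-(u)_r|/r)\,dx$ is simply bounded by $(\star)$, so you may take $\e=1$ and drop the absorption language. Second, your closing comment overstates the role of Assumption~\ref{assumption}: the identity $\V^{*}(\V'(t))\sim\V(t)$ already follows from $\Delta_2(\V,\V^{*})<\infty$ alone (indeed $\V^{*}(\V'(t))=t\V'(t)-\V(t)$, and $\V^{*}\in\Delta_2$ forces $t\V'(t)\ge p_0\,\V(t)$ for some $p_0>1$), which is precisely the hypothesis of the theorem. The $C^1$ regularity of~$\V$ used in the chain rule is part of the definition of an $N$-function, and the membership $v\in W^{1,1}(\B_r)$ is justified a~posteriori since both $\V(|u-(u)_r|/r)$ and $\V'(|u-(u)_r|/r)\,|Du|/r$ are integrable by $(\star)$ and Young's inequality.
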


\subsection{Assumptions on $a$ and $b$}\label{hpoperatore}
For the vector field $a:\Omega \times \R^{N}\times \R^{nN}\ri \R^{nN}$ we assume that it is Borel measurable and that the partial map $\xi\to a(\cdot,\cdot,\xi)$ is differentiable. Moreover we consider the following set of assumptions.
\begin{compactenum}[$(H_{1})$]
\item [$(H_{1})$] $|a(x, u, \xi)|\leq \Lambda \V'(|\xi|)$, for any $x\in \Omega$, $u\in \R^{N}$ and $\xi \in \R^{nN}$; 
\item [$(H_{2})$] $(D_{\xi}a(x, u, \xi)\eta, \eta)\geq \nu \V''(|\xi|)|\eta|^{2}$, for any $x\in \Omega$, $u\in \R^{N}$, $\xi, \eta\in \R^{nN}$, $\xi\neq 0$, $0<\nu\leq 1\leq \Lambda<\infty$; 
\item [$(H_{3})$] $|D_{\xi} a(x, u, \xi)|\leq \Lambda \V''(|\xi|)$, for any $x\in \Omega$, $u\in \R^{N}$ and $\xi \in \R^{nN}$, $\xi\neq 0$;  
\item [$(H_{4})$] $\forall \xi, \eta\in \R^{nN}$ such that $0<|\eta|\leq \frac{1}{2}|\xi|$ it holds
\begin{align*}
|D_{\xi}a(x, u, \xi) - D_{\xi}a(x, u, \xi+\eta)|\leq \Lambda \V''(|\xi|) \left(\frac{|\eta|}{|\xi|}\right)^{\beta_{0}}
\end{align*}
for some $\beta_{0} \in (0, 1]$;
\item [$(H_{5})$] $|a(x, u, \xi) -a(x, u_{0}, \xi)|\leq \Lambda \omega(|u-u_{0}|) \V'(|\xi|)$ where $\omega: [0, \infty)\ri [0, 1]$ is a non-decreasing concave modulus of continuity with $\displaystyle{\lim_{s\ri 0} \omega(s)=0=\omega(0)}$; 
\item [$(H_{6})$] the following VMO-condition holds true: 
\begin{align*}
|a(x, u, \xi) - \left( a(\cdot, u, \xi)\right)_{x_{0},r}|\leq \Lambda \w_{x_{0}}(x, r) \V'(|\xi|) \mbox{ for all } x\in \B_{r}(x_{0})
\end{align*} 
whenever $x_{0}\in \Omega$, $r\in (0, \rho_{0}]$, $u\in \R^{N}$ and $\xi\in \R^{nN}$, where $\rho_{0}>0$ and $\w_{x_{0}}: \R^{n}\times [0, \rho_{0}]\ri [0, 2\Lambda]$ is a  bounded functions satisfying 
\begin{align*}
\lim_{\rho \ri 0} {\rm W}(\rho) =0, \mbox{ where } {\rm W}(\rho)= \sup_{x_{0}\in \Omega} \, \sup_{0<r\leq \rho} \, \-int_{\B_{r}(x_{0})} \w_{x_{0}}(x, r)\, dx. 
\end{align*}
\end{compactenum}
We will use the notation
\begin{align*}
\left( a(\cdot, u, \xi)\right)_{x_{0},r}:=\-int_{\B_{r}(x_{0})} a(y, u, \xi)\, dy. 
\end{align*}
In the special case $x_{0}=0$ we omit the dependence on $x_{0}$ and we will simply write $\left( a(\cdot, u, \xi)\right)_{r}$. 
\smallskip

\noindent Moreover we assume that the vector field $a(x,u,\cdot)$ admits a $\V$-Laplacian type behavior at the origin in the sense that the limit relation
\begin{equation}\label{nearzero}
\lim_{t\to 0^+}\frac{a(x,u,t\xi)}{\V'(t)}=|\xi|
\end{equation}
holds uniformly in $\{\xi\in\R^{Nn} : |\xi|=1\}$, and uniformly for all  $x\in\Omega$ and all $u\in\R^N$.
\smallskip

\noindent The inhomogeneity $b:\Omega\times \R^{N}\times \R^{nN}\ri \R^{N}$ is a Borel measurable function verifying the following controllable growth condition:
\begin{compactenum}
\item [$(H_{7})$] $|b(x, u, \xi)|\leq L \V'(|\xi|)$, for any $x\in \Omega$, $u\in \R^{N}$ and $\xi \in \R^{nN}$.
\end{compactenum}

\medskip

\noindent
Here, we consider weak solutions $u\in W^{1, \V}(\Omega, \R^{N})$ of elliptic systems of the type \eqref{P}. By this we mean
\begin{align}\label{ws}
\int_{\Omega} a(x, u, Du) D\eta \, dx = \int_{\Omega} b(x, u, Du) \eta \, dx
\end{align}
for any $\eta \in \C^{\infty}_{0}(\Omega, \R^{N})$. 

\smallskip

\noindent We define $\Sigma_1$ and $\Sigma_2$ the sets 
$$
\Sigma_{1}=\left\{x_{0} \in \Omega \, : \, \liminf_{r\ri 0} \-int_{\B_{r}(x_{0})} |V(Du) -(V(Du))_{x_{0}, r}|^{2}\, dx >0\right\}, 
$$
$$
\Sigma_2=\left\{x_0\in\Omega : \limsup_{r\to 0} |(Du)_{x_{0},r}|=+\infty \right\},
$$
where $V$ is in (\ref{AVlambda1}). The regular points  will be the set of full Lebesgue measure $\mathcal{R}(u)=\Omega\setminus (\Sigma_1\cup\Sigma_2)$. It turns out that $\mathcal{R}(u)\subset\Omega_0$, where $\Omega_0$ is given  in the following theorem which is our main result.

\begin{thm}\label{mainthm}
Let $\V$ satisfy Assumption \ref{assumption} and let $u\in W^{1, \V}(\Omega, \R^{N})$ be a weak solution of \eqref{P} under the hypotheses $(H_{1})$-$(H_{7})$ and \eqref{nearzero}. Then there exists an open subset $\Omega_0\subseteq\Omega$ such that $u\in \C^{0,\alpha}(\Omega_0,\R^N)$ for every $\alpha\in (0,1)$ and $|\Omega\setminus\Omega_0|=0$.
 \end{thm}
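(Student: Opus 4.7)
The plan is to implement the $\A$-harmonic approximation method at the level of the function $u$ (not $Du$), exploiting the Orlicz-shifted $N$-functions $\V_a$ and Corollary \ref{coro} in order to bypass the higher integrability of $Du-D\ell_r$ that a gradient-level approach would require. Fix $x_0\in\Omega$, a ball $\B_r(x_0)\subset\Omega$, and let $\ell_{x_0,r}$ be the (quasi-)minimizer of $\ell\mapsto \-int_{\B_r(x_0)}|u-\ell|\,dx$ among affine maps. Writing $\Gamma_{x_0,r}:=|D\ell_{x_0,r}|$, introduce the excess
\begin{equation*}
\Phi(x_0,r) := \-int_{\B_r(x_0)} \V_{\Gamma_{x_0,r}}\!\left(\frac{|u-\ell_{x_0,r}|}{r}\right)\,dx.
\end{equation*}
The target is a decay of the form $\Phi(x_0,\tau r)\leq C\,\tau^{2\alpha}\,\Phi(x_0,r) + \mathrm{error}(r)$, with the error vanishing as $r\to 0$; iteration and Campanato's integral characterization will then yield $u\in\C^{0,\alpha}$ on the set $\Omega_0$ where an initial smallness of $\Phi(x_0,r_0)$ is met and $|(Du)_{x_0,r_0}|$ is bounded.

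First, I would prove a Caccioppoli inequality of the second kind for $u-\ell$ with $\ell$ an arbitrary affine map. Testing the weak formulation (\ref{ws}) with $\eta=\zeta^2(u-\ell)$ for a standard cut-off $\zeta$, the monotonicity $(H_2)$ together with the bounds $(H_1)$, $(H_3)$, the continuity $(H_5)$, the VMO condition $(H_6)$ and the controllable growth $(H_7)$ combine, via shifted Young's inequality and (\ref{t3}), to produce
\begin{equation*}
\-int_{\B_{r/2}}\V_{\Gamma}(|Du-D\ell|)\,dx \leq C\-int_{\B_r}\V_{\Gamma}\!\left(\frac{|u-\ell|}{r}\right)dx + \text{extra terms controlled by } \mathrm{W}(r),\,\omega,\,b,
\end{equation*}
with $\Gamma=|D\ell|$. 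The presence of the extra, non-autonomous-type terms is precisely what prevents a classical Gehring-type higher integrability for $Du-D\ell_r$, and motivates the strategy below.

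Next, linearize by freezing the coefficients and setting $\A := D_\xi a\bigl(x_0,(u)_{x_0,r},D\ell_{x_0,r}\bigr)$. By $(H_2)$--$(H_3)$, $\A$ is Legendre--Hadamard elliptic with constants comparable to $\nu\V''(\Gamma_{x_0,r})$ and $\Lambda\V''(\Gamma_{x_0,r})$, the degenerate regime $\Gamma_{x_0,r}\to 0$ being handled by (\ref{nearzero}). Using $(H_4)$ to estimate $|D_\xi a(x,u,D\ell_{x_0,r})-D_\xi a(x,u,Du)|$, $(H_5)$ for the $u$-variation, $(H_6)$ for the $x$-variation, $(H_7)$ for the inhomogeneity, together with Sobolev--Poincar\'e (Theorem \ref{SP}) and the Caccioppoli bound, I obtain an almost-$\A$-harmonicity estimate for $w:=u-\ell_{x_0,r}$ of the form
\begin{equation*}
\left|\-int_{\B_r}\A(Dw,D\eta)\,dx\right| \leq \varepsilon(r)\,\V''(\Gamma_{x_0,r})\,\Gamma_{x_0,r}\,\|D\eta\|_\infty,\qquad \eta\in\C^\infty_0(\B_r,\R^N),
\end{equation*}
with $\varepsilon(r)$ small in terms of $\mathrm{W}(r)$, $\omega$, $\Phi(x_0,r)$ and $r$. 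Corollary \ref{coro}, the functional version of the Orlicz $\A$-harmonic approximation lemma, then provides an $\A$-harmonic $h$ on $\B_{r/2}$ with
\begin{equation*}
\-int_{\B_{r/2}}\V_{\Gamma_{x_0,r}}\!\left(\frac{|w-h|}{r}\right)dx \leq \delta\,\Phi(x_0,r)+\text{small error},
\end{equation*}
for any preassigned $\delta>0$, provided the smallness assumption of the lemma is met.

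Finally, the $\C^\infty$-regularity of $\A$-harmonic functions delivers, for $\tau\in(0,1/4)$, the quadratic decay $\-int_{\B_{\tau r}}\V_{\Gamma}(|h-\tilde\ell_{\tau r}|/(\tau r))\,dx \leq C\,\tau^2\-int_{\B_{r/2}}\V_{\Gamma}(|h-\tilde\ell_{r/2}|/r)\,dx$ for the best affine fit $\tilde\ell$ of $h$; triangle inequality and minimality of $\ell_{x_0,\tau r}$ transfer this decay to $\Phi$. Choosing first $\tau$ small so that $C\tau^2<\tau^{2\alpha}$ and then $\delta,r_0$ small so that the error terms remain controlled (using $\mathrm{W}(\rho),\omega(s)\to 0$), iteration produces, for every $x_0\notin\Sigma_1\cup\Sigma_2$ at which $\Phi(x_0,r_0)$ is small, the Morrey-type estimate $\Phi(x_0,r)\leq C r^{2\alpha}$ for all small $r$; Campanato's characterization then gives $u\in\C^{0,\alpha}$ in a neighbourhood of $x_0$. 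Openness of $\Omega_0$ follows from the continuous dependence of the excess and of $(Du)_{x_0,r_0}$ on $x_0$, while $|\Omega\setminus\Omega_0|=0$ follows from $|\Sigma_1|=|\Sigma_2|=0$ (Lebesgue differentiation applied to $V(Du)\in L^2_{\mathrm{loc}}$ and $Du\in L^1_{\mathrm{loc}}$). The hardest step is the almost-$\A$-harmonicity estimate, since controlling simultaneously the VMO modulus $\w_{x_0}$, the modulus $\omega$ in $u$, the degeneracy (which forces the use of (\ref{nearzero}) to identify the leading behaviour of $a$ as $\xi\to 0$) and the forcing term $b$ requires delicate Young-type splittings in the shifted Orlicz scale; the unified treatment of the full range $1<p_0<2<p_1$ hinges on working with $\V_a$ throughout.
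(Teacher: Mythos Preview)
Your outline captures the non-degenerate branch of the argument essentially as in the paper: Caccioppoli for $u-\ell$, linearization at $D\ell_{x_0,r}$, almost-$\A$-harmonicity, Corollary~\ref{coro} to get closeness of $u-\ell_{x_0,r}$ and the $\A$-harmonic $h$ at the function level, then decay of $\Phi$ via the regularity of $h$. That part is fine.

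The genuine gap is the degenerate regime. Your only remark on it, ``the degenerate regime $\Gamma_{x_0,r}\to 0$ being handled by (\ref{nearzero})'', does not describe a mechanism, and the $\A$-harmonic scheme as you set it up actually breaks down there. The operator $\A=D_\xi a(x_0,(u)_{x_0,r},D\ell_{x_0,r})$ (or its averaged version) has ellipticity and growth constants proportional to $\V''(\Gamma_{x_0,r})$; after normalization one needs the smallness condition of Theorem~\ref{approxdlsv}, which in effect is $\Phi(x_0,r)/\V(\Gamma_{x_0,r})\ll 1$. This is precisely the \emph{non-degenerate} hypothesis, and it may fail along the iteration (and is meaningless when $\Gamma_{x_0,r}=0$). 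Assumption~(\ref{nearzero}) does not rescue the linearized picture: it says that $a(x,u,\cdot)$ behaves like the $\V$-Laplacian near the origin, not that $D_\xi a$ is uniformly elliptic there.

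In the paper this is handled by a second, separate mechanism: when $\mu_*\,\V(|D\ell_{x_0,r}|)\leq\Phi(x_0,r)$ one shows (Lemma~\ref{lem1DC}, using (\ref{nearzero}) and the higher integrability of Theorem~\ref{higherintegrability}) that $u$ is almost $\V$-harmonic, applies the $\V$-harmonic approximation Theorem~\ref{phiappr}, and transfers the decay of Theorem~\ref{regh} to $\Phi$ (Lemma~\ref{iterazioneDC}). The two regimes are then interleaved (Lemma~\ref{lemconclusivo}) to control $\Psi_\alpha$ along all radii. Your proposal is missing this entire branch; without it, the iteration you describe cannot be carried through whenever the non-degenerate smallness $\Phi/\V(\Gamma)\!<\!\mu_*$ is violated at some scale.
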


\subsection{Notations}
In order to simplify the presentation, we denote by $c$ a generic positive constant, which may change from line to line, but does not depend on crucial quantities. By $\B_{r}(x_{0})$ we indicate the open ball in $\R^{n}$ centered in $x_{0}\in \R^{n}$ and radius $r>0$. In the case $x_{0}=0$ we simply write $\B_{r}$. For $v\in L^{1}(\B_{r}(x_{0}))$, we define
\begin{align*}
(v)_{x_{0}, r}:=\-int_{\B_{r}(x_{0})} v(x) \, dx,  
\end{align*} 
and when $x_{0}=0$ we omit the dependance on $x_{0}$ as follows $(v)_{r}=(v)_{0, r}$.

%%%%%%%%%%%%%%%%%%%%%%%%%%%%%%%%%%%%%%%%%%%%%%%%%%%%%%%%%%%%%%%%%%%%%%%%%%%%%%%%%%%%%%%%%%%%%%%%%%%%%%%%%%%%%%%%%%%%%%%%%%%%%%%%%%%%%%%%%%%%%%%%%%%%%%%%%%%%%%%%%%%%%%%%%%%%%%%%%%%%%%%%%%%%%%%%%%%%%%%%%%%%%%%%%%%%%%%%%%%%%%%%%%%%%%%%%%%%%%%%%%%%%%%%%%%%%%%%%%%%%%%%%%%%%%%%%%%%%%%%%%%%%%%%%%%%%%%%%%%%%%%%%%%%%%%%%%%%%%%%%%%%%%%%%%%%%%%%%%%%%%%%%%%%%%%%%%%%%%%%%%%%%%%%%%%%%%%%%%%%%%%%%%%%%%%%%%%%%%%%%%%%%%%%%%%%%%%%%%%%%%%%%%%%%%%%%%%%%%%%%%%%%%%%%%%%%%%%%%%%%%%%%%%%%%%%%%%%%%%%%%%%%%%%%%%%%%%%%%%%%%%%%%%%%%%%%%%%%%%%%%%%%%%%%%%%%%%%%%%%%%%%%%%%%%%%%%%%%%%%%%%%%%%%%%%%%%%%%%%%%%%%%%%%%%%%%%%%%%%%%%%%%%%%%%%%%%%%%%%%%%%%%%%%%%%%%%%%%%%%%%%%%%%%%%%%%%%%%%%%%%%%%%%%%%%%%%%%%%%%%%%%%%%%%%%%%%%%%%%%%%%%%%%%%%%%%%%%%%%%%%%%%%%%%%%%%%%%%%%%%%%%%%%%%%%%%%%%%%%%%%%%%%%%%%%%%%%%%%%%%%%%%%%%%%%%%%%%%%%%%%%%%%%%%%%%%%%%%%%%%%%%%%%%%%%%%%%%%%%%%%%%%%%%%%%%%%%%%%%%%%%%%%%%%%%%%%%%%%%%%%%%%%%%%%%%%%%%%%%%%%%%%%%%%%%%%%%%%%%%%%%%%%%%%%%%%%%%%%%%%%%%%%%%%%%%%%%%%%%%%%%%%%%%%%%%%%%%%%%%%%%%%%%%%%%%%%%%%%%%%%%%%%%%%%%%%%%%%%%%%%%%%%%%%%%%%%%%%%%%%%%%%%%%%%%%%%%%%%%%%%%%%%%%%%%%%%%%%%%%%%%%%%%%%%%%%%%%%%%%%%%%%%%%%%%%%%%%%%%%%%%%%%%%

\section{Preliminary}

\noindent
We collect here some useful result that will be needed in the sequel.

\subsection{Affine functions}

\noindent
Let $x_{0}\in \R^{n}$ and $r>0$. Given $u\in L^{2}(\B_{r}(x_{0}), \R^{N})$, we denote by $\ell_{x_{0}, r}:\R^{n}\ri \R^{N}$ the unique affine function minimizing the functional
\begin{align*}
\ell\mapsto \-int_{\B_{r}(x_{0})} |u-\ell|^2\, dx 
\end{align*}
amongst all affine functions $\ell: \R^{n}\ri \R^{N}$. It is well known that 
\begin{align*}
\ell_{x_{0}, r}(x)= (u)_{x_{0}, r} + Q_{x_{0}, r}(x-x_{0}), 
\end{align*}
where 
\begin{align*}
Q_{x_{0}, r}= \frac{n+2}{r^{2}} \-int_{\B_{r}(x_{0})} u(x)\otimes (x-x_{0})\, dx.  
\end{align*}
Let us recall that for any $Q\in\R^{Nn}$ and $\xi\in\R^N$ there holds
\begin{equation}\label{minimizza}
|Q_{x_{0}, r}-Q|\le\frac{n+2}{r} \-int_{\B_{r}(x_{0})} |u-\xi-Q(x-x_0)|\, dx.
\end{equation}
\noindent The following lemma ensures that $\ell_{x_0,r}$ is an almost minimizer of the functional $\displaystyle{\ell\mapsto \-int_{\B_{r}(x_{0})} |u-\ell|\, dx}$ amongst the affine functions $\ell: \R^{n}\ri \R^{N}$ (see Lemma 2.7 in \cite{BJDE}).

\begin{lem}
Let $u\in L^{1}(\B_{r}(x_{0}), \R^{N})$. Then, we have
$$
 \-int_{\B_{r}(x_{0})} |u-\ell_{x_0,r}|\, dx\le c  \-int_{\B_{r}(x_{0})} |u-\ell|\, dx,
 $$
 for every  affine function $\ell: \R^{n}\ri \R^{N}$.
\end{lem}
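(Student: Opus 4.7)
The plan is to exploit the explicit formula for $\ell_{x_0,r}$ and the estimate \eqref{minimizza} to compare $\ell_{x_0,r}$ with an arbitrary affine function $\ell$, then apply the triangle inequality. Concretely, given an arbitrary affine $\ell(x)=\xi+Q(x-x_0)$ with $\xi\in\R^N$ and $Q\in\R^{Nn}$, I would write
\[
\-int_{\B_r(x_0)} |u-\ell_{x_0,r}|\,dx \le \-int_{\B_r(x_0)} |u-\ell|\,dx + \-int_{\B_r(x_0)} |\ell-\ell_{x_0,r}|\,dx,
\]
so the real task is to estimate the second integral on the right by $c\-int_{\B_r(x_0)} |u-\ell|\,dx$.

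Since $\ell_{x_0,r}(x)=(u)_{x_0,r}+Q_{x_0,r}(x-x_0)$, the difference $\ell-\ell_{x_0,r}$ is itself affine, with
\[
|\ell(x)-\ell_{x_0,r}(x)| \le |\xi-(u)_{x_0,r}| + |Q-Q_{x_0,r}|\,|x-x_0|,
\]
and since $\-int_{\B_r(x_0)} |x-x_0|\,dx\le r$, we obtain
\[
\-int_{\B_r(x_0)} |\ell-\ell_{x_0,r}|\,dx \le |\xi-(u)_{x_0,r}| + r\,|Q-Q_{x_0,r}|.
\]

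Both terms on the right can now be controlled by $\-int_{\B_r(x_0)} |u-\ell|\,dx$. For the first, I would use that $\-int_{\B_r(x_0)} Q(x-x_0)\,dx=0$ by symmetry of the ball, so that
\[
|\xi-(u)_{x_0,r}| = \Bigl|\-int_{\B_r(x_0)} (\ell-u)\,dx\Bigr| \le \-int_{\B_r(x_0)} |u-\ell|\,dx.
\]
For the second, I would invoke \eqref{minimizza} directly:
\[
r\,|Q_{x_0,r}-Q|\le (n+2)\-int_{\B_r(x_0)} |u-\xi-Q(x-x_0)|\,dx=(n+2)\-int_{\B_r(x_0)} |u-\ell|\,dx.
\]
Combining these with the triangle inequality above gives the claim with $c=n+3$.

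There is no real obstacle here: the argument is purely linear-algebraic, using only that $\-int_{\B_r(x_0)} (x-x_0)\,dx=0$ and the already-stated bound \eqref{minimizza} on $Q_{x_0,r}$. The only thing to be careful about is keeping the roles of $\xi$ and $Q$ symmetric and not losing a factor of $r$ when estimating $|\ell-\ell_{x_0,r}|$ on the ball.
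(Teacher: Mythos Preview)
Your proof is correct and is exactly the standard argument; the paper does not supply its own proof for this lemma (it simply cites \cite[Lemma~2.7]{BJDE}), but the very same estimate on $|\ell-\ell_{x_0,r}|$ that you derive is reproduced verbatim in the paper's proof of the next lemma (Lemma~\ref{minaffine}). One trivial arithmetic remark: combining the triangle inequality with your two bounds actually yields $c=1+1+(n+2)=n+4$ rather than $n+3$, but of course the precise value of the constant is irrelevant.
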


\noindent Actually we have that  $\ell_{x_0,r}$ is an almost minimizer also of the functional $\displaystyle{\ell\mapsto \-int_{\B_{r}(x_{0})} \V_a\left(\frac{|u-\ell|}{r}\right)\, dx}$ amongst the affine functions $\ell: \R^{n}\ri \R^{N}$. 
\begin{lem}\label{minaffine}
Let $a\geq 0$ and $\B_{r}(x_{0})\subset \R^{n}$ with $r>0$. Then, for any $u\in W^{1, \V}(\B_{r}(x_{0}), \R^{N})$ we have
\begin{align*}
\-int_{\B_{r}(x_{0})} \V_{a}\left(\frac{|u-\ell_{x_{0},r}|}{r}\right)\, dx \leq c \-int_{\B_{r}(x_{0})} \V_{a}\left(\frac{|u-\ell|}{r}\right)\, dx 
\end{align*}
where $c= c(p_{0}, p_{1})>0$. 
\end{lem}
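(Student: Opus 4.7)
The plan is to bootstrap Lemma~2.1 (the $L^1$ version just proved) to the Orlicz setting by exploiting three ingredients: the $\Delta_2$ property of $\V_a$ (uniform in $a$ by Lemma~\ref{phia}), the scaling estimates \eqref{t1} for $\V_a$ (valid with the same $p_0,p_1$ by Remark~\ref{phiap0p1}), and the classical equivalence of norms on the finite-dimensional space of affine maps from $\R^n$ to $\R^N$, restricted to a ball.

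First I would split $u-\ell_{x_0,r}=(u-\ell)+(\ell-\ell_{x_0,r})$. Using the triangle inequality, monotonicity and the uniform $\Delta_2$ condition for $\V_a$, one gets pointwise
\begin{equation*}
\V_a\!\left(\frac{|u-\ell_{x_0,r}|}{r}\right)\le c\,\V_a\!\left(\frac{|u-\ell|}{r}\right)+c\,\V_a\!\left(\frac{|\ell-\ell_{x_0,r}|}{r}\right),
\end{equation*}
with $c=c(p_0,p_1)$. Thus it suffices to absorb the second summand, and this is where the affine structure of $m:=\ell-\ell_{x_0,r}$ enters.

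Next, since the $L^1$-average and the supremum are equivalent (semi)norms on the finite-dimensional space of affine maps, a scaling argument on $\B_1$ gives a dimensional constant $c(n)$ with
\begin{equation*}
\sup_{\B_r(x_0)}|m|\le c(n)\,\-int_{\B_r(x_0)}|m|\,dx.
\end{equation*}
Combining the triangle inequality with the previously established $L^1$ almost-minimality yields
\begin{equation*}
\-int_{\B_r(x_0)}|m|\,dx\le \-int_{\B_r(x_0)}|u-\ell|\,dx+\-int_{\B_r(x_0)}|u-\ell_{x_0,r}|\,dx\le c\,\-int_{\B_r(x_0)}|u-\ell|\,dx.
\end{equation*}

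Finally, I would chain these estimates with monotonicity, the scaling \eqref{t1} for $\V_a$ (which converts a multiplicative constant $c\ge 1$ into a factor $c^{p_1}$), and Jensen's inequality for the convex function $\V_a$ to write
\begin{align*}
\-int_{\B_r(x_0)}\V_a\!\left(\frac{|m|}{r}\right)dx
&\le \V_a\!\left(\frac{\sup_{\B_r(x_0)}|m|}{r}\right)\le c\,\V_a\!\left(\-int_{\B_r(x_0)}\frac{|u-\ell|}{r}\,dx\right)\\
&\le c\,\-int_{\B_r(x_0)}\V_a\!\left(\frac{|u-\ell|}{r}\right)dx,
\end{align*}
with $c=c(n,p_0,p_1)$. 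Inserting this into the first display closes the argument. The main point to watch (and the only place a mistake would hurt) is to make sure every constant depends only on $p_0,p_1$ (and $n$), not on $a$: this is guaranteed by Lemma~\ref{phia} and Remark~\ref{phiap0p1}, which provide $\Delta_2$ and the power-type bounds \eqref{t1} for $\V_a$ uniformly in $a\ge 0$. There is no serious analytic obstacle beyond keeping track of these uniform constants.
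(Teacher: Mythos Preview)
Your proof is correct and follows essentially the same structure as the paper's: split via the $\Delta_2$ condition, bound $\sup_{\B_r(x_0)}|\ell-\ell_{x_0,r}|$ by a constant times $\-int_{\B_r(x_0)}|u-\ell|\,dx$, then conclude by monotonicity of $\V_a$ and Jensen's inequality. The only minor variation is that the paper obtains the sup bound directly from the explicit formula for $\ell_{x_0,r}$ (via $\ell_{x_0,r}(x_0)=(u)_{x_0,r}$ and \eqref{minimizza}), whereas you route through the $L^1$ almost-minimality lemma together with norm equivalence on affine maps; the two are equivalent.
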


\begin{proof}
Assume $x_{0}=0$. Recalling that $\V_a(s+t)\sim \V_a(s)+ \V_a(t)$ for any $s, t\geq 0$, we have 
\begin{align*}
\-int_{\B_{r}} \V_{a}\left(\frac{|u-\ell_{r}|}{r}\right)\, dx \le c \-int_{\B_{r}} \V_{a}\left(\frac{|u-\ell|}{r}\right)\, dx + c\-int_{\B_{r}} \V_{a}\left(\frac{|\ell-\ell_{r}|}{r}\right)\, dx. 
\end{align*}
We note that for any $x\in \B_{r}$, we get
\begin{align*}
|\ell_{r}(x)-\ell(x)|\leq |(u)_{r}-\ell(0)|+ |D\ell_{r}- D\ell|r \leq c \-int_{\B_{r}} |u-\ell|\, dx.  
\end{align*}
Hence, using this inequality, the fact that $\V_{a}$ is increasing together with Jensen's inequality, we can infer that
\begin{align*}
\-int_{\B_{r}} \V_{a} \left( \frac{|\ell- \ell_{r}|}{r}\right) \, dx &\le c \-int_{\B_{r}} \V_{a} \left(\frac{1}{r} \-int_{\B_{r}} |u-\ell|\, dx\right)\, dx \\
&\lesssim \-int_{\B_{r}} \V_{a} \left( \frac{|u-\ell|}{r}\right)\, dx. 
\end{align*}
\end{proof}

\noindent The following lemma is proved in \cite[Corollary 26]{DKr}.
\begin{lem}\label{shift}
Let $\V$ be a $N$-function satisfying Assumption \ref{assumption}. Then for any $\delta>0$ there exists $C_\delta > 0$, which only depends on $\delta$ and $\Delta_2(\V)$ such that for all $a,b\in\R^d$ and $t\ge 0$
$$
\V_{|a|}(t)\le C_\delta \V_{|b|}(t) + \delta \V_{|a|}(|a-b|).
$$
\end{lem}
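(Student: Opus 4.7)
The plan is to split into two regimes based on the size of $t$ relative to $|a-b|$ and in each reduce the estimate to the standard scaling (\ref{t1}). I would use the representation $\V_{|a|}(t)\sim \V''(|a|+t)\,t^2$ from (\ref{t6}) and the fact, recorded in Remark \ref{phiap0p1}, that each $\V_c$ inherits from $\V$ the same type exponents $(p_0,p_1)$.

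First, in the regime $t\le \delta^{1/p_0}|a-b|$, applying the lower-type bound in (\ref{t1}) to $\V_{|a|}$ with $s=t/|a-b|\le \delta^{1/p_0}\le 1$ gives
$$
\V_{|a|}(t)=\V_{|a|}(s|a-b|)\le s^{p_0}\,\V_{|a|}(|a-b|)\le \delta\,\V_{|a|}(|a-b|),
$$
which is exactly the second term on the right-hand side of the desired inequality. In the complementary regime $t>\delta^{1/p_0}|a-b|$, the triangle inequality yields $|a-b|<\delta^{-1/p_0}\,t$, so with $K_\delta:=1+\delta^{-1/p_0}$ one gets $K_\delta^{-1}(|a|+t)\le |b|+t\le K_\delta(|a|+t)$. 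Combining the equivalence $\V''(s)\sim \V(s)/s^2$, which is a direct consequence of (\ref{t3}), with (\ref{t1}) then turns the comparability of $|a|+t$ and $|b|+t$ into $\V''(|a|+t)\le c\,K_\delta^{p_1+2}\,\V''(|b|+t)$; multiplying by $t^2$ and re-invoking (\ref{t6}) yields $\V_{|a|}(t)\le C_\delta\,\V_{|b|}(t)$ with $C_\delta$ depending only on $\delta$, $p_0$, $p_1$ and the $\Delta_2$-constants. Summing the two cases gives the claim.

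The only delicate step, and the one I expect to be the main obstacle, is transferring the bound on the ratio $(|a|+t)/(|b|+t)$ into a bound on $\V''(|a|+t)/\V''(|b|+t)$: under Assumption \ref{assumption} with $1<p_0<2<p_1$, the second derivative $\V''$ need not be monotone, so a direct pointwise comparison is unavailable and the detour through $\V''\sim \V/(\cdot)^2$ (together with the type estimate (\ref{t1}) on $\V$) is essential. Note also that the threshold $\delta^{1/p_0}|a-b|$ is tuned precisely so that the scaling exponent in the first regime matches the lower type $p_0$, producing the factor $\delta$ cleanly and without any hidden constants.
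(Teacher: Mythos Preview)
Your argument is correct. The two-regime split at the threshold $t=\delta^{1/p_0}|a-b|$ is exactly the right idea: in the first regime the lower-type estimate (\ref{t1}) applied to $\V_{|a|}$ (legitimate by Remark \ref{phiap0p1}) gives the $\delta\,\V_{|a|}(|a-b|)$ term directly; in the second regime the comparability $|a|+t\sim_\delta |b|+t$ together with $\V''(s)\sim \V(s)/s^2$ and (\ref{t1}) for $\V$ yields $\V''(|a|+t)\le c\,K_\delta^{p_1+2}\V''(|b|+t)$, and (\ref{t6}) converts this into the desired bound on $\V_{|a|}(t)$. Your remark that $\V''$ need not be monotone is well placed and justifies the detour through $\V''\sim \V/(\cdot)^2$.

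Two minor bookkeeping points you should make explicit in a final write-up: (i) the argument as written assumes $\delta\le 1$ (so that $\delta^{1/p_0}\le 1$ and (\ref{t1}) applies with $s\le 1$); the case $\delta>1$ follows trivially from the case $\delta=1$. (ii) When $a=b$ the first regime degenerates to $t=0$; the inequality is then a tautology, so one may assume $|a-b|>0$ from the outset.

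As for comparison with the paper: the paper does not prove this lemma at all but simply quotes it from \cite[Corollary 26]{DKr}. Your proof is therefore a self-contained substitute for that citation, relying only on the structural facts about $\V$ and $\V_a$ already recorded in Section \ref{sect2}.
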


\begin{remark}\label{mediamin}
Another basic inequality is the following:
\begin{align}\label{ok2.7}
\-int_{\B_{r}(x_{0})} \V_{a} \left( \frac{|u-(u)_{x_{0}, r}|}{R}\right) \leq c \,\-int_{\B_{r}(x_{0})} \V_{a}\left(\frac{|u-u_{0}|}{R}\right)\, dx.
\end{align}
for any $u_0\in\R^N$ and for any $a,R>0$.
\end{remark}
\subsection{Useful remarks} 

\begin{remark}\label{rem1}
For any $x\in \Omega$, $u\in \R^{N}$ and $\xi, \xi_{0}\in \R^{nN}\setminus \{0\}$, using $(H_2)$, \eqref{t3} and \eqref{t6} we obtain
\begin{align*}
\left(a(x, u, \xi)- a(x, u, \xi_{0}), \xi-\xi_{0}\right)&= \int_{0}^{1}\frac{d}{dt} \left( a(x, u, \xi_{0}+t(\xi-\xi_{0})), \xi-\xi_{0}\right)\, dt \\
&=\int_{0}^{1} \left(D_{\xi} a(x, u, \xi_{0}+t(\xi-\xi_{0})) (\xi-\xi_{0}), (\xi-\xi_{0})\right)\, dt \\
&\geq \nu \int_{0}^{1} \V''(|\xi_{0}+t(\xi-\xi_{0})|) \, |\xi-\xi_{0}|^{2}\, dt \\
&\ge c \int_{0}^{1} \frac{\V'(|\xi_{0}+t(\xi-\xi_{0})|)}{|\xi_{0}+t(\xi-\xi_{0})|} \, |\xi-\xi_{0}|^{2}\, dt \\
&\ge c\frac{\V'(|\xi_{0}|+|\xi|)}{|\xi_{0}|+|\xi|} \, |\xi-\xi_{0}|^{2}\\
&\geq c \frac{\V'(|\xi_{0}|+|\xi_{0}-\xi|)}{|\xi_{0}|+|\xi_{0}-\xi|} |\xi-\xi_{0}|^{2}\\
&\geq c \V_{|\xi_{0}|}(|\xi-\xi_{0}|).
\end{align*}
%Observing that 
%\begin{align*}
%|\xi_{0}|+|\xi| = \frac{|\xi_{0}|}{2} + \frac{|\xi_{0}-\xi+\xi|}{2}+ |\xi| \geq \frac{|\xi_{0}|}{2} + \frac{|\xi_{0}-\xi|}{2}- \frac{|\xi|}{2}+ |\xi| \geq \frac{1}{2} \left( |\xi_{0}| + |\xi_{0}-\xi|\right)
%\end{align*}
%and 
%\begin{align*}
%|\xi_{0}|+|\xi| \leq |\xi-\xi_{0}|+ 2|\xi_{0}|, 
%\end{align*}
%we can see that \eqref{t6} yields
%\begin{align*}
%\left(a(x, u, \xi)- a(x, u, \xi_{0}), \xi-\xi_{0}\right)&\geq \nu c \frac{\V'(|\xi_{0}|+|\xi_{0}-\xi|)}{|\xi_{0}|+|\xi_{0}-\xi|} |\xi-\xi_{0}|^{2}\\
%&\geq \nu c \V_{|\xi_{0}|}(|\xi-\xi_{0}|).
%\end{align*}
\end{remark}

\begin{remark}\label{rem2}
Following the lines of Remark \ref{rem1} we can prove that for any $x\in \Omega$, $u\in \R^{N}$ and $\xi, \xi_{0}\in \R^{nN}$ it holds
\begin{align*}
|a(x, u, \xi)- a(x, u, \xi_{0})|\leq c \V'_{|\xi_{0}|}(|\xi-\xi_{0}|). 
\end{align*}
\end{remark}

\subsection{The $\varphi$-harmonic approximation}

We will use the following $\V$-harmonic approximation result (see \cite[Theorem 1.1]{DSV1}).

\begin{thm}\label{phiappr}
Let $\V$ satisfy Assumption \ref{assumption}; for every $\e>0$ and $\theta\in (0,1)$ there exists $\delta_0=\delta_0(n,N,p_0,p_1,\e,\theta)$ such that the following holds:
Whenever $u\in W^{1\V}(\B_{2r}(x_0),\R^N)$ is almost $\V$-harmonic in the ball $\B_r(x_0)$ in the sense that
$$
\-int_{\B_r(x_0)} \left(\V'(|Du|)\frac{Du}{|Du|},D\zeta \right)\,dx\le\delta_0\left(\-int_{\B_{2r}(x_0)}\V(|Du|)\,dx+\V(\|D\zeta\|_{\infty})\right),
%\V(\|D\zeta\|_{L^\infty(\B_r(x_0),\R^N)})\right),
$$
for all $\zeta\in \C^\infty_c(\B_r(x_0),\R^N)$, then the unique $\V$-harmonic solution $h$ of 
\begin{align*}
\left\{
\begin{array}{ll}
-\dive \left( \V'(|Dh|) \frac{Dh}{|Dh|}\right) =0 &\mbox{ in } \B_r(x_0), \\
h=u &\mbox{ on } \partial \B_r(x_0) 
\end{array}
\right. 
\end{align*}
satisfies
\begin{align*}
\left(\-int_{\B_r(x_0)} |V(Du)- V(Dh)|^{2\theta} \, dx \right)^{\frac{1}{\theta}} \leq \e \-int_{\B_{2r}(x_0)} \V(|Du|)\, dx.
\end{align*}
\end{thm}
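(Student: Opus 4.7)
The plan is to prove the $\V$-harmonic approximation by combining a shift-monotonicity inequality with a Lipschitz truncation argument, in the spirit of Diening--Stroffolini--Verde. I would first reduce to $x_0=0$, $r=1$ by translation and scaling, and let $h$ be the unique $\V$-harmonic map agreeing with $u$ on $\partial\B_1$; existence is from strict monotonicity of the $\V$-Laplace operator. Setting $w:=u-h\in W^{1,\V}_0(\B_1,\R^N)$, the starting point is the shift-monotonicity
$$
\left(\V'(|\xi|)\tfrac{\xi}{|\xi|}-\V'(|\eta|)\tfrac{\eta}{|\eta|},\xi-\eta\right)\gtrsim |V(\xi)-V(\eta)|^2\sim \V_{|\eta|}(|\xi-\eta|),
$$
which follows from the same computation as in Remark \ref{rem1} applied to the $\V$-Laplace vector field. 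Integrated over $\B_1$, it controls $\int|V(Du)-V(Dh)|^2\,dx$ by a weak-form expression in $Dw$. If $w$ were Lipschitz we could test directly with it: the $\V$-harmonic equation for $h$ kills one piece and the almost-$\V$-harmonicity of $u$ bounds the other piece by $\delta_0(\-int_{\B_2}\V(|Du|)dx+\V(\|Dw\|_\infty))$.

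Since $w\in W^{1,\V}_0$ is not Lipschitz, the plan is to use a Lipschitz truncation $w_\lambda\in W^{1,\infty}_0(\B_1,\R^N)$ with $\|Dw_\lambda\|_\infty\leq c\lambda$, $w_\lambda=w$ on the good set $G_\lambda:=\{M(|Dw|)\leq\lambda\}\cap\B_1$, and bad set $B_\lambda:=\B_1\setminus G_\lambda$ satisfying a standard Chebyshev-type bound together with the shifted-$\V$ estimate $\int_{B_\lambda}\V(|Dw|)\,dx\le c\int_{\{M(|Dw|)>\lambda\}}\V(|Dw|)\,dx$. Assumption \ref{assumption} guarantees $\Delta_2(\V,\V^*)<\infty$, hence boundedness of the maximal operator $M$ on $L^\V$, which is precisely what makes the Orlicz Lipschitz truncation available. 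Testing the almost-$\V$-harmonicity with (a smooth approximation of) $w_\lambda$ and subtracting the $\V$-harmonic equation for $h$ would then give
$$
\int_{\B_1}\!\left(\V'(|Du|)\tfrac{Du}{|Du|}-\V'(|Dh|)\tfrac{Dh}{|Dh|},Dw_\lambda\right)dx\lesssim \delta_0\Bigl(\!\-int_{\B_2}\V(|Du|)dx+\V(\lambda)\Bigr).
$$

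Splitting the integrand according to $G_\lambda,B_\lambda$: on $G_\lambda$ one has $Dw_\lambda=Du-Dh$, so the shift-monotonicity yields
$$
\int_{G_\lambda}|V(Du)-V(Dh)|^2dx\lesssim \delta_0\bigl(1+\V(\lambda)\bigr)+\int_{B_\lambda}\bigl(\V(|Du|)+\V(|Dh|)\bigr)dx,
$$
the bad-set contamination being handled via Young's inequality and the bound $\V'(|\xi|)\leq c\V'_{|\eta|}(|\xi-\eta|)+c\V'(|\eta|)$ (from Lemma \ref{phia} and \eqref{t1}--\eqref{t6}). To pass from an $L^2$ estimate on $G_\lambda$ to an $L^{2\theta}$ estimate on $\B_1$, I would apply Hölder:
$$
\-int_{\B_1}|V(Du){-}V(Dh)|^{2\theta}dx\lesssim \Bigl(\!\-int_{G_\lambda}|V(Du){-}V(Dh)|^2\Bigr)^{\theta}+\Bigl(\tfrac{|B_\lambda|}{|\B_1|}\Bigr)^{1-\theta}\Bigl(\!\-int_{\B_1}|V(Du){-}V(Dh)|^2\Bigr)^{\theta}.
$$
A Caccioppoli-type comparison yields $\int_{\B_1}\V(|Dh|)\,dx\lesssim \int_{\B_2}\V(|Du|)\,dx$, so the second factor is controlled by data. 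Given $\e>0$, I would first pick $\lambda$ large so that $|B_\lambda|^{1-\theta}$ and the bad-set contamination are sufficiently small (uniformly in the class normalized by $\-int_{\B_2}\V(|Du|)\,dx\leq 1$), and then pick $\delta_0$ so small, depending on $\lambda$, that $\delta_0(1+\V(\lambda))\leq\e$; raising to the $1/\theta$ power closes the argument.

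The main obstacle is constructing and rigorously justifying the Lipschitz truncation in the Orlicz setting: one must produce $w_\lambda$ with sharp shifted-$\V$ estimates on $B_\lambda$ and then justify using $w_\lambda$ as a test function in the almost-$\V$-harmonic inequality, which is stated only for $\zeta\in\C^\infty_c$, via a density argument compatible with $\V$ that is permissible precisely because $\Delta_2(\V,\V^*)<\infty$. The restriction $\theta<1$ (rather than $\theta=1$) is forced by this scheme: the bad set $B_\lambda$ never has vanishing measure, so $L^2$ integrability on $B_\lambda$ can be recovered only through interpolation with a power strictly less than one.
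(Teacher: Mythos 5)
First, a remark on scope: the paper does not actually prove Theorem \ref{phiappr} --- it is imported verbatim from \cite[Theorem 1.1]{DSV1} --- so there is no internal proof to compare against. Your strategy (shift--monotonicity for the $\V$-Laplacian, Lipschitz truncation of $w=u-h$, boundedness of the maximal operator on $L^{\V}$ from $\Delta_2(\V,\V^*)<\infty$, and the restriction $\theta<1$ arising from interpolation over the bad set) is indeed the strategy of the cited reference, and it is also exactly the mechanism the paper itself deploys when it proves the analogous $\A$-harmonic statement, Theorem \ref{approxdlsv}. The skeleton is correct.

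Two steps, however, do not close as written. (i) \emph{The selection of $\lambda$.} You propose to ``pick $\lambda$ large'' so that $|B_\lambda|^{1-\theta}$ and the bad-set terms are small, \emph{uniformly over the class normalized by} $\-int_{\B_{2r}}\V(|Du|)\,dx\le 1$, and then let $\delta_0$ depend on $\lambda$. Since $\V$ is inhomogeneous, that normalization cannot be achieved by rescaling $u$; without it, the admissible $\lambda$ depends on $u$, hence so would $\delta_0$, contradicting the statement in which $\delta_0=\delta_0(n,N,p_0,p_1,\e,\theta)$. The correct device --- visible in the paper's own proof of Theorem \ref{approxdlsv} via Theorem 21 of \cite{DLSV} --- is the dyadic pigeonhole: fix $\mu$ by $\V(\mu)=\-int_{\B_r}\V(|Dw|)\,dx$ and select $\lambda\in[\mu,2^{m_0}\mu]$ so that $\V(\lambda)\,|B_\lambda|\le \frac{c}{m_0}\int_{\B_r}\V(|Dw|)\,dx$ and $|B_\lambda|/|\B_r|\le c/m_0$; the smallness then comes from $m_0$, which depends only on $\e$ and $\theta$, and $\delta_0$ is chosen afterwards to beat the factor $2^{m_0 p_1}$ appearing in $\delta_0\,\V(\|Dw_\lambda\|_\infty)\le c\,\delta_0\, 2^{m_0 p_1}\-int_{\B_r}\V(|Dw|)\,dx$. (ii) \emph{The bad-set energy.} The term $\int_{B_\lambda}\bigl(\V(|Du|)+\V(|Dh|)\bigr)\,dx$ is \emph{not} small for large $\lambda$: $|B_\lambda|\to 0$, but $\V(|Du|)$ is merely $L^1$ with no equi-integrability uniform over the class, so this quantity cannot be ``made small by choosing $\lambda$ large''. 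It must instead be generated with a free Young parameter, $\lambda\,\V'(|Du|)\le \delta\,\V(|Du|)+c_\delta\,\V(\lambda)$, carried to the end as $\delta\-int_{\B_{2r}}\V(|Du|)\,dx$ (harmless, since the target bound is $\e\-int_{\B_{2r}}\V(|Du|)\,dx$), while $c_\delta\,\V(\lambda)\,|B_\lambda|$ is absorbed by the pigeonhole choice above. With these two repairs --- and the comparison $\int_{\B_r}\V(|Dh|)\,dx\le c\int_{\B_r}\V(|Du|)\,dx$, obtained by testing the equation for $h$ with $u-h$ --- your argument becomes the one in \cite{DSV1}.
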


\noindent The following results are contained in \cite{DSV} (see Lemma 5.8 and Theorem 6.4). 

\begin{thm}\label{regh}
Let $\Omega\subset\R^n$ be an open set, let $\V$ satisfy Assumption \ref{assumption}, and let $h\in W^{1,\V}(\Omega,\R^N)$ be $\V$-Harmonic on $\Omega$. Then for every ball $\B$ with 
$2\B\Subset\Omega$ there holds
\begin{equation}\label{suph}
\sup_{\B} \V(|Dh|)\le c_*\-int_{2\B}\V(Dh|)\,dx,
\end{equation}
where $c_*$ depends on $n,N,p_0,p_1$. 

\noindent Moreover, there exist  $\alpha_0>0$ and $c^*=c^*(n,N,p_0,p_1)$ such that for every ball $\B\subset\Omega$ and every $\lambda\in (0,1)$ there holds
\begin{equation}\label{decayh}
\-int_{\lambda\B}|V(Dh)-(V(Dh))_{\lambda\B}|^2 dx\le c^*\lambda^{2\alpha_0}\-int_{\B}|V(Dh)-(V(Dh))_{\B}|^2 dx.
\end{equation}
\end{thm}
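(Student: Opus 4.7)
The plan is to establish \eqref{suph} and \eqref{decayh} in turn, each by a classical regularity technique adapted to the $\V$-Laplacian structure. The sup estimate comes from Moser iteration on the differentiated system; the excess decay follows from a freezing/comparison argument with a constant coefficient linear system whose solutions enjoy sharp Campanato decay.

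\textbf{Step 1: Sup estimate.} Write the $\V$-harmonic equation as $\dive(a(Dh))=0$ with $a(\xi):=\V'(|\xi|)\xi/|\xi|$. By difference quotients, each $\partial_s h$ weakly solves a linear system $\dive(A(x)D(\partial_s h))=0$, where $A^{\alpha\beta}_{ij}(x):=D_{\xi^\beta_j}a^\alpha_i(Dh(x))$. From the explicit form of $Da$ together with \eqref{t3} and \eqref{t6}, the tensor $A(x)$ is elliptic with bounds
$$
c_1\V''(|Dh(x)|)|\zeta|^2\le A^{\alpha\beta}_{ij}(x)\zeta^\alpha_i\zeta^\beta_j\le c_2\V''(|Dh(x)|)|\zeta|^2.
$$
Testing with $\eta^j=\zeta^2\V(|Dh|)^{q-1}\partial_sh^j$ for a cutoff $\zeta$, summing in $s$, and combining with Sobolev embedding and the $(p_0,p_1)$-uniform $\Delta_2$ bounds from Assumption \ref{assumption}, one obtains a reverse Hölder chain for $\V(|Dh|)^q$ on concentric subballs of $2\B$. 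Standard Moser iteration on $q\to\infty$ then yields \eqref{suph}.

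\textbf{Step 2: Excess decay.} Fix $\B\subset\Omega$ and set $Q:=(Dh)_\B$. When $|Q|>0$, let $w\in h+W^{1,2}_0(\B,\R^N)$ solve the constant coefficient linear system $\dive(\bar A\,Dw)=0$ with $\bar A:=D_\xi a(Q)$; by Step 1 this tensor is uniformly elliptic with constants controlled by $\V''(|Q|)$. Classical theory for linear elliptic systems gives the sharp Campanato decay
$$
\-int_{\lambda\B}|Dw-(Dw)_{\lambda\B}|^2\,dx\le c\lambda^2\-int_\B|Dw-(Dw)_\B|^2\,dx,\qquad\lambda\in(0,1).
$$
Subtracting the equations for $h$ and $w$, testing with $h-w$, and using the strict monotonicity of $a$ (Remark \ref{rem1}) together with the Hölder dependence of $D_\xi a$ in $\xi$ near $Q$ (derivable from Assumption \ref{assumption}), one obtains
$$
\-int_\B|V(Dh)-V(Dw)|^2\,dx\le c\,\delta(\B)\-int_\B|V(Dh)-V(Q)|^2\,dx,
$$
where $\delta(\B)\to 0$ as the $V$-excess on $\B$ shrinks. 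Combining with the linear decay above and the equivalence \eqref{Vphi} (which yields $\-int_\B|V(Dh)-V(Q)|^2\,dx\sim\-int_\B\V_{|Q|}(|Dh-Q|)\,dx$), an excess-decay iteration on dyadic subballs produces \eqref{decayh} with some $\alpha_0=\alpha_0(n,N,p_0,p_1)\in(0,1)$.

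\textbf{Main obstacle.} The genuine difficulty is the degenerate case $|Q|=0$ or $|Q|$ small relative to the $V$-excess on $\B$, where the frozen tensor $\bar A$ loses uniform ellipticity. This is handled by a dichotomy: if the $V$-excess dominates $|V(Q)|^2$, Lemma \ref{shift} is used to replace $\V_{|Q|}$ by $\V_{|Dh|}$, and the estimate is closed via the reverse Hölder bounds obtained in Step 1; otherwise the non-degenerate comparison applies directly. This two-case analysis ensures that the constants $c^*$ and $\alpha_0$ in \eqref{decayh} depend only on $n,N,p_0,p_1$, as claimed.
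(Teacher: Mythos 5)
The paper does not prove Theorem \ref{regh}: it is quoted verbatim from \cite{DSV} (Lemma 5.8 and Theorem 6.4 there), so the comparison here is between your sketch and the argument of that reference. Your Step 1 is essentially the standard (and correct) route: differentiating the system, noting that $D_\xi a(\xi)$ has all eigenvalues comparable to $\V''(|\xi|)$ by \eqref{t3}, and running a Moser/De Giorgi iteration whose upshot is that $\V(|Dh|)$ (equivalently $|V(Dh)|^2$) is a subsolution of a uniformly elliptic linear equation; modulo the usual difference-quotient care at the degeneracy set $\{Dh=0\}$, this gives \eqref{suph}.

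The gap is in Step 2, in the degenerate branch. The estimate \eqref{decayh} is an unconditional a priori estimate, valid for every ball and every $\lambda$, so your comparison argument — which needs the excess to be small relative to $\V(|Q|)$ in order to make $\delta(\B)$ small — can only be invoked in the non-degenerate alternative. In the complementary case, where $\-int_{\B}|V(Dh)-(V(Dh))_{\B}|^2\,dx$ dominates $|(V(Dh))_{\B}|^2$, the tools you list (Lemma \ref{shift} plus the reverse H\"older/sup bounds of Step 1) only give
$$
\-int_{\lambda\B}|V(Dh)-(V(Dh))_{\lambda\B}|^2\,dx\le \sup_{\lambda\B}|V(Dh)|^2\le c\-int_{\B}\V(|Dh|)\,dx\le \frac{c}{\e}\-int_{\B}|V(Dh)-(V(Dh))_{\B}|^2\,dx,
$$
i.e.\ a bound with a constant, not the factor $\lambda^{2\alpha_0}$; nothing in your argument makes any quantity strictly decrease when passing to a smaller ball in this regime, so the dyadic iteration does not close. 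The missing ingredient is the Uhlenbeck/De Giorgi measure-theoretic alternative applied to the subsolution $|V(Dh)|^2$: if the set $\{|V(Dh)|^2\le \tfrac12\sup_{\B}|V(Dh)|^2\}$ occupies a fixed fraction of $\B$ (which is forced in the degenerate case), a level-set (Caccioppoli on superlevel sets plus De Giorgi iteration) argument yields $\sup_{\theta\B}\V(|Dh|)\le\gamma\sup_{\B}\V(|Dh|)$ with $\gamma<1$, and it is this geometric decay of the supremum — combined with the trivial bound of the excess by the supremum — that supplies the power decay in the degenerate alternative. This oscillation-reduction step is the core of \cite{DSV} and is not derivable from \eqref{suph} alone; without it your proof of \eqref{decayh} is incomplete.
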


\subsection{$\mathcal{A}$-harmonic approximation}

In our context, we shall exploit a suitable version of the $\mathcal{A}$-harmonic approximation result given in \cite{DLSV}. It can be retrieved through a slight modification of Theorem 14 in \cite{DLSV}. To do this we will need the following lemma.

\begin{lem}\label{phiausilio}
Let $\V$ satisfy Assumption \ref{assumption} and let $1<s<\frac{p_{1}}{p_{1}-p_{0}+1}$ and $\psi(t)= \V^{\frac{1}{s}}(t)$ for $t\geq 0$. Then $\psi$ satisfies Assumption \ref{assumption} with $p_{0}$ and $p_{1}$ replaced by $\left(\frac{1-s}{s}\right)p_{1}+ p_{0}$ and $\left(\frac{1-s}{s}\right) p_{0}+ p_{1}$, respectively. 
\end{lem}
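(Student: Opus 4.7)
The plan is entirely based on a direct differentiation of $\psi(t)=\V(t)^{1/s}$ and a reduction of the ratio $t\psi''(t)/\psi'(t)$ to the quantities appearing in Assumption \ref{assumption} and in \eqref{CO1}. First I would compute
\begin{equation*}
\psi'(t)=\tfrac{1}{s}\V(t)^{(1-s)/s}\V'(t),\qquad
\psi''(t)=\tfrac{1}{s}\V(t)^{(1-s)/s}\left(\tfrac{1-s}{s}\tfrac{\V'(t)^2}{\V(t)}+\V''(t)\right),
\end{equation*}
which are well-defined and continuous on $(0,\infty)$ because $\V(t)>0$ for $t>0$, so $\psi\in \C^{1}([0,\infty))\cap \C^{2}(0,\infty)$ is inherited from the regularity of $\V$. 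Dividing yields the key identity
\begin{equation*}
\frac{t\psi''(t)}{\psi'(t)}=\frac{1-s}{s}\cdot\frac{t\V'(t)}{\V(t)}+\frac{t\V''(t)}{\V'(t)}.
\end{equation*}

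The second step is to insert the bounds $p_0\le t\V'(t)/\V(t)\le p_1$ from \eqref{CO1} and $p_0-1\le t\V''(t)/\V'(t)\le p_1-1$ from Assumption \ref{assumption}. The only technical care is that, since $s>1$, the coefficient $(1-s)/s$ is negative, so the first term is bounded between $\frac{1-s}{s}p_1$ and $\frac{1-s}{s}p_0$ (with the orientation reversed). Adding the contributions gives
\begin{equation*}
\frac{1-s}{s}p_1+p_0-1\;\le\;\frac{t\psi''(t)}{\psi'(t)}\;\le\;\frac{1-s}{s}p_0+p_1-1,
\end{equation*}
which are exactly $q_0-1$ and $q_1-1$ with $q_0:=\frac{1-s}{s}p_1+p_0$ and $q_1:=\frac{1-s}{s}p_0+p_1$.

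It remains to check that $q_0>1$ (so that $\psi$ actually satisfies Assumption \ref{assumption} with these new exponents) and that $\psi$ is an $N$-function. The condition $q_0>1$ is equivalent, after an elementary manipulation, to $s<p_1/(p_1-p_0+1)$, which is precisely the hypothesis. A small side remark that I would record is that $p_1/(p_1-p_0+1)\le p_0$, equivalent to $(p_0-1)(p_1-p_0)\ge 0$; hence $s<p_0$ is automatic. This in turn gives $\psi'(0)=0$: combining $\V(t)\sim t\V'(t)$ from \eqref{t3} with \eqref{t2} produces $\psi'(t)\lesssim t^{(p_0-s)/s}$ near the origin, so $\psi'(0^+)=0$; convexity $\psi''\ge (q_0-1)\psi'/t>0$ together with $\psi'(t)>0$ for $t>0$ and an analogous estimate for $t\to\infty$ finishes the $N$-function properties.

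The main (and in fact only) obstacle is purely bookkeeping: one must keep track of the sign of $(1-s)/s$ when applying the two-sided bounds so as not to swap the exponents, and one must verify that the hypothesis on $s$ is sharp enough to guarantee both positivity of $q_0-1$ and the boundary behavior $\psi'(0)=0$. Everything else is formal manipulation of the identity derived in the first step.
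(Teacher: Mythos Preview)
Your proposal is correct and follows essentially the same route as the paper: both compute the identity
\[
\frac{t\psi''(t)}{\psi'(t)}=\frac{1-s}{s}\cdot\frac{t\V'(t)}{\V(t)}+\frac{t\V''(t)}{\V'(t)},
\]
insert the two-sided bounds from Assumption \ref{assumption} and \eqref{CO1} (taking care of the negative coefficient $(1-s)/s$), and separately verify the $N$-function properties $\psi'(0^+)=0$ and $\psi'(t)\to\infty$ via the fact that $s<p_0$. Your explicit observation that $p_1/(p_1-p_0+1)\le p_0$, hence $s<p_0$ is automatic, is a useful clarification that the paper leaves implicit.
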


\begin{proof}
The function $\psi$ is  $\C^{2}(0, \infty)$ and $\psi(0)=0$. On the other hand, its derivative is: 
\begin{align*}
\psi'(t)= \frac{1}{s} \V^{\frac{1}{s}-1}(t) \V'(t)>0 \quad \forall t>0.  
\end{align*} 
We can easily deduce that $\displaystyle{\lim_{t\ri 0^{+}} \psi'(t)=0}$. Indeed, using \eqref{CO1} and observing that $p_{1}>s$, for any $t\in (0,1)$, we have 
\begin{align*}
\psi'(t)= \frac{1}{s} \frac{\V^{\frac{1}{s}}(t)}{t} \frac{t\V'(t)}{\V(t)} \leq \frac{p_{1}}{s} \frac{t^{\frac{p_{1}}{s}} \V(1)^{\frac{1}{s}}}{t} \ri 0  \mbox{ as } t\ri 0^{+}.
\end{align*}
In a similar way we can prove that $\displaystyle{\lim_{t\ri +\infty} \psi'(t)=+\infty}$, since $s<p_0$. 
%Indeed, using \eqref{CO1} and observing that $p_{0}>s$, for any $t>1$
%\begin{align*}
%\psi'(t)= \frac{1}{s} \frac{\V^{\frac{1}{s}}(t)}{t} \frac{t\V'(t)}{\V(t)} \geq \frac{p_{0}}{s} \frac{t^{\frac{p_{0}}{s}} \V(1)^{\frac{1}{s}}}{t} \ri +\infty  \mbox{ as } t\ri +\infty. 
%\end{align*}
By the choice of $s$  we can also show that $\psi$ satisfies 
\begin{align*}
\left(\frac{1-s}{s}\right)p_{1}+ p_{0}-1 \leq \inf_{t>0} \frac{t\, \psi''(t)}{\psi'(t)} \leq \sup_{t>0} \frac{t\, \psi''(t)}{\psi'(t)} \leq \left(\frac{1-s}{s}\right) p_{0}+ p_{1}-1.
\end{align*}
%namely, $\psi$ satisfies \eqref{assnew} with $p_{0}$ and $p_{1}$ replaced by $\left(\frac{1-s}{s}\right)p_{1}+ p_{0}$ and $\left(\frac{1-s}{s}\right) p_{0}+ p_{1}$, respectively. 
Indeed, by using \eqref{assnew} and \eqref{CO1}, for $t>0$ we have
\begin{align*}
\frac{t\, \psi''(t)}{\psi'(t)}&= \frac{t \left[\frac{1-s}{s} \V'(t)^{2} + \V(t)\V''(t)\right]}{\V(t)\V'(t)}\\
&= \left(\frac{1-s}{s}\right) \frac{t\V'(t)}{\V(t)} + \frac{t\V''(t)}{\V'(t)}\left\{
\begin{array}{ll}
\leq \left(\frac{1-s}{s}\right) p_{0}+ p_{1}-1\\
\geq \left(\frac{1-s}{s}\right)p_{1}+ p_{0}-1,
\end{array}
\right.
\end{align*}
and, of course, $\left(\frac{1-s}{s}\right)p_{1}+ p_{0}-1>0$.
\end{proof}

\noindent
As said before, our $\mathcal{A}$-harmonic approximation result is a slight modification of Theorem 14 in \cite{DLSV}. We prove it for the sake of completeness.

\begin{thm}\label{approxdlsv}
Let $\B_r(x_0)\Subset \Omega$ a ball let $\A$ be strongly elliptic in the sense of Legendre-Hadamard. Let $\V$ be an $N$-function satisfying Assumption \eqref{assumption} and let $1<s<\frac{p_{1}}{p_{1}-p_{0}+1}$. Then, for every $\e>0$ there exists $\delta_{0}=\delta_0(n, N, \lambda, \Lambda, \Delta_{2}(\V, \V^{*}), s,\e)>0$ such that the following holds: let $u\in W^{1, \V}(\B_r(x_0),\R^N)$ be almost $\A$-harmonic in the sense that 
\begin{align}\label{assdlsv} 
\left| \-int_{\B_r(x_0)} \A(Du, D\xi) \, dx\right|\leq \delta_{0} \Upsilon \|D\xi\|_{\infty} 
%\|D\xi\|_{L^{\infty}(\B_r(x_0))} 
\end{align}   
for all $\xi \in \C^{\infty}_{c}(\B_r(x_0),\R^N)$. 
Given $h$ solution to
\begin{align*}
\left\{
\begin{array}{ll}
-\dive(\A Dh)=0 &\mbox{ on } \B_r(x_0), \\
h=u &\mbox{ on } \partial \B_r(x_0).
\end{array}
\right. 
\end{align*}
Then $h$ satisfies 
\begin{align*}
\-int_{\B_r(x_0)} \V^{\frac{1}{s}} (|D(u-h)|)\, dx \leq \e \left\{\left[ \-int_{\B_r(x_0)} \V(|Du|)\, dx \right]^{\frac{1}{s}} + \V^{\frac{1}{s}}(\Upsilon) \right\}. 
\end{align*}
\end{thm}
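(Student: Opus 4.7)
The plan is to reduce the statement to Theorem 14 of [DLSV] by changing the underlying $N$-function. First I would introduce $\psi(t) := \V^{1/s}(t)$ and verify via Lemma \ref{phiausilio} that the hypothesis $1 < s < p_1/(p_1-p_0+1)$ makes $\psi$ itself an $N$-function satisfying Assumption \ref{assumption}, with new exponents $\tilde p_0 := \frac{1-s}{s}p_1 + p_0 > 1$ and $\tilde p_1 := \frac{1-s}{s}p_0 + p_1$. In particular $\Delta_2(\psi,\psi^*) < \infty$, with constants depending only on $\Delta_2(\V,\V^*)$ and $s$, so the $N$-function $\psi$ falls inside the admissible class of [DLSV].

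Second, I would apply Theorem 14 of [DLSV] to $u$ with $\psi$ in the role of $\V$. The bilinear form $\mathcal{A}$ and the comparison map $h$ are unaltered, and the almost-$\mathcal{A}$-harmonicity hypothesis \eqref{assdlsv} has exactly the scale-invariant form required in that theorem (its right-hand side involves only $\Upsilon$ and $\|D\xi\|_\infty$, and does not refer to any specific $N$-function). Choosing $\delta_0 = \delta_0(n,N,\nu,\Lambda,\Delta_2(\V,\V^*),s,\varepsilon)$ as supplied by that theorem for the function $\psi$, I obtain
\[
\-int_{\B_r(x_0)} \psi(|D(u-h)|)\,dx \leq \varepsilon \left(\-int_{\B_r(x_0)} \psi(|Du|)\,dx + \psi(\Upsilon)\right).
\]
Third, since $s>1$ the function $t\mapsto t^{1/s}$ is concave, so Jensen's inequality gives
\[
\-int_{\B_r(x_0)} \psi(|Du|)\,dx = \-int_{\B_r(x_0)} \V(|Du|)^{1/s}\,dx \leq \left(\-int_{\B_r(x_0)} \V(|Du|)\,dx\right)^{1/s}.
\]
Substituting this, together with $\psi(\Upsilon) = \V^{1/s}(\Upsilon)$, into the previous display yields exactly the claimed estimate.

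The principal technical point is the first step: one must be sure that the transformed function $\psi$ genuinely sits in the class to which Theorem 14 of [DLSV] applies, which requires both $\tilde p_0 > 1$ and control of $\Delta_2(\psi,\psi^*)$ by quantities depending only on $s$ and $\Delta_2(\V,\V^*)$. The constraint $s < p_1/(p_1-p_0+1)$ is precisely what produces $\tilde p_0 > 1$, and Lemma \ref{phiausilio} does the rest. Once $\psi$ is placed inside the admissible class, the conclusion is essentially a direct invocation of [DLSV] followed by a concavity inequality, so no further hard analysis is needed.
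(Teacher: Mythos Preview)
Your strategy is precisely the paper's: set $\psi := \V^{1/s}$, check via Lemma \ref{phiausilio} that $\psi$ is an $N$-function satisfying Assumption \ref{assumption}, and then run the $\mathcal{A}$-harmonic approximation argument of [DLSV] with $\psi$ in place of $\V$.

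The only looseness is in your step~2. Theorem~14 of [DLSV] does not yield the display you write: its conclusion carries a higher-integrability exponent $s_0>1$ and requires $\psi(|Du|)\in L^{s_0}$ as input --- the right-hand side reads $\varepsilon\bigl[(\-int \psi(|Du|)^{s_0}\,dx)^{1/s_0}+\psi(\Upsilon)\bigr]$ rather than $\varepsilon\bigl[\-int \psi(|Du|)\,dx+\psi(\Upsilon)\bigr]$. This is why the paper, instead of citing Theorem~14 as a black box, re-derives it from its ingredients (Lemma~20 and the Lipschitz truncation of Theorem~21 in [DLSV]), fixing the H\"older exponent in the truncation estimate equal to $s$: since $\psi^s=\V$, the higher integrability $\psi(|Du|)\in L^s$ is then automatic from $u\in W^{1,\V}$, and the conclusion emerges directly as $\varepsilon\bigl[(\-int \V(|Du|)\,dx)^{1/s}+\V^{1/s}(\Upsilon)\bigr]$. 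If you prefer to black-box Theorem~14, the same choice $s_0=s$ works and renders your Jensen step~3 superfluous. Either way the argument closes; you just need to account for the higher-integrability exponent that your step~2 currently suppresses.
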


\begin{proof}
Let $\psi$ be an $N$-function satisfying Assumption \ref{assumption}. Then, 
%Given $\psi$ satisfying Assumption \ref{assumption}, 
from Lemma 20 in \cite{DLSV} it follows that for any $v\in W^{1, \psi}_{0}(\B_r(x_0),\R^N)$ it holds
\begin{align}\label{dlsv1}
\-int_{\B_r(x_0)} \psi(|Dv|) \sim \sup_{\xi\in \C^{\infty}_{c}(\B_r(x_0),\R^N)} \left[ \-int_{\B_r(x_0)} \A(Dv, D\xi) \, dx - \-int_{\B_r(x_0)} \psi^{*}(|D\xi|)\, dx \right]. 
\end{align}
We will choose $\psi= \V^{\frac{1}{s}}$, recalling that such $\psi$ satisfies Assumption \ref{assumption}, thanks to Lemma \ref{phiausilio}. Let $\xi \in \C^{\infty}_{c}(\B_r(x_0),\R^N)$ and choose $\mu\geq 0$ such that 
\begin{align}\label{phi*mu}
\psi^{*}(\mu)= \-int_{\B_r(x_0)} \psi^{*}(|D\xi|)
\end{align}
and take $m_{0}\in \N$. Then, from Theorem 21 in \cite{DLSV} there exists $\lambda \in [\mu, 2^{m_{0}}\mu]$ and $\xi_{\lambda}\in W^{1, \infty}_{0}(\B_r(x_0),R^N)$ such that 
\begin{align}
%&\|D\xi_{\lambda}\|_{L^{\infty}(\B_r(x_0))}\leq c \lambda \label{dlsv2} \\
&\|D\xi_{\lambda}\|_{\infty}\leq c \lambda \label{dlsv2} \\
&\-int_{\B_r(x_0)} \psi^{*}(|D\xi_{\lambda}|) \chi_{\{\xi \neq \xi_{\lambda}\}} \, dx \leq c \psi^{*}(\lambda) \frac{|\{\xi\neq \xi_{\lambda}\}|}{|\B_r(x_0)|} \leq \frac{c}{m_{0}} \-int_{\B_r(x_0)} \psi^{*}(|D\xi|)\, dx \label{dlsv3} \\
&\-int_{\B_r(x_0)} \psi^{*}(|D\xi_{\lambda}|) \, dx \leq c\-int_{\B_r(x_0)} \psi^{*}(|D\xi|)\, dx. \label{dlsv4}
\end{align}
Now, we note that 
\begin{align*}
\-int_{\B_r(x_0)} \A(Du, D\xi)\, dx = \-int_{\B_r(x_0)} \A(Du, D\xi_{\lambda})\, dx + \-int_{\B_r(x_0)} \A(Du, D(\xi-\xi_{\lambda}))\, dx= I+ I\!I.  
\end{align*}
Let us study $I\!I$. Using the growth assumption, Young inequality together with \ref{dlsv4} and $\lambda \in [\mu, 2^{m_{0}}\mu]$ we get
\begin{align*}
I\!I&= \-int_{\B_r(x_0)} \A(Du, D(\xi-\xi_{\lambda})) \chi_{\{\xi\neq \xi_{\lambda}\}}\, dx\leq \Lambda \-int_{\B_r(x_0)} |Du| |D(\xi-\xi_{\lambda})| \chi_{\{\xi\neq \xi_{\lambda}\}}\, dx \\
&\leq c\-int_{\B_r(x_0)} \psi(|Du|\chi_{\{\xi\neq \xi_{\lambda}\}})\, dx +\frac{1}{2} \-int_{\B_r(x_0)}\psi^{*}(|D\xi|)\, dx \\
&\leq c \left( \-int_{\B_r(x_0)} \psi(|Du|)^{s} \, dx\right)^{\frac{1}{s}} \left(\frac{|\{\xi\neq \xi_{\lambda}\}|}{|\B_r(x_0)|} \right)^{1-\frac{1}{s}} + \frac{1}{2} \-int_{\B_r(x_0)}\psi^{*}(|D\xi|)\, dx \\
&\leq c \left( \-int_{\B_r(x_0)} \psi(|Du|)^{s} \, dx\right)^{\frac{1}{s}} \left( \frac{\psi^{*}(\mu)}{\psi^{*}(\lambda)}\frac{1}{m_{0}}\right)^{1-\frac{1}{s}} + \frac{1}{2} \-int_{\B_r(x_0)}\psi^{*}(|D\xi|)\, dx \\
&\leq c \left( \-int_{\B_r(x_0)} \psi(|Du|)^{s} \, dx\right)^{\frac{1}{s}} \left(\frac{1}{m_{0}}\right)^{1-\frac{1}{s}}+ \frac{1}{2} \-int_{\B_r(x_0)}\psi^{*}(|D\xi|)\, dx \\
&\leq \e \left( \-int_{\B_r(x_0)} \psi(|Du|)^{s} \, dx\right)^{\frac{1}{s}}  + \frac{1}{2} \-int_{\B_r(x_0)}\psi^{*}(|D\xi|)\, dx
\end{align*}
provided that $m_{0}$ is sufficiently large. 

\noindent
Let us estimate $I$. From \ref{assdlsv}, \ref{dlsv2} and $\lambda \in [\mu, 2^{m_{0}}\mu]$, we infer
\begin{align*}
\left| \-int_{\B_r(x_0)} \A (Du, D\xi_{\lambda})\, dx \right|\leq \delta_{0} \Upsilon \|D\xi_{\lambda}\|_{\infty} \leq \delta_{0} \Upsilon c \lambda \leq c \delta_{0} \Upsilon 2^{m_{0}} \mu \leq c \delta_{0} 2^{m_{0}} \left[\psi(\Upsilon) + \psi^{*}(\mu)\right]. 
\end{align*}
%\begin{align*}
%\left| \-int_{\B_r(x_0)} \A (Du, D\xi_{\lambda})\, dx \right|\leq \delta_{0} \Upsilon \|D\xi_{\lambda}\|_{L^{\infty}(\B_r(x_0))} \leq \delta_{0} \Upsilon c \lambda \leq c \delta_{0} \Upsilon 2^{m_{0}} \mu \leq c \delta_{0} 2^{m_{0}} \left[\psi(\Upsilon) + \psi^{*}(\mu)\right]. 
%\end{align*}
Now if $\delta_{0}$ is such that $2 c \delta_{0} 2^{m_{0}}\leq \e$, from \ref{phi*mu}, we deduce 
\begin{align*}
|I|\leq \e \psi(\Upsilon) +\frac{1}{2} \-int_{\B_r(x_0)} \psi^{*}(|D\xi|)\, dx.  
\end{align*}
Therefore
\begin{align*}
\-int_{\B_r(x_0)} \A(Du, D\xi)\, dx \leq \e \left[ \left( \-int_{\B_r(x_0)} \psi(|Du|)^{s} \, dx\right)^{\frac{1}{s}} + \psi(\Upsilon) \right] +\-int_{\B_r(x_0)} \psi^{*}(|D\xi|)\, dx.  
\end{align*}
Recalling that $h$ is $\A$-harmonic we have
\begin{align*}
\-int_{\B_r(x_0)} \A(D(u-h), D\xi)\, dx - \-int_{\B_r(x_0)} \psi^{*}(|D\xi|)\, dx \leq \e \left[ \left( \-int_{\B_r(x_0)} \psi(|Du|)^{s} \, dx\right)^{\frac{1}{s}} + \psi(\Upsilon) \right].  
\end{align*}
Exploiting (\ref{dlsv1}) and $u-h=0$ on $\partial \B_r(x_0)$ we obtain 
\begin{align*}
\-int_{\B_r(x_0)} \psi(|D(u-h)|)\, dx \leq \e \left[ \left( \-int_{\B_r(x_0)} \psi(|Du|)^{s} \, dx\right)^{\frac{1}{s}} + \psi(\Upsilon) \right]
\end{align*}
from which follows the thesis thanks to the choice of $\psi$. 
\end{proof}

\noindent
Applying Theorem \ref{SP} we get the following result.

\begin{cor}\label{coro}
In the same hypotheses of Theorem \ref{approxdlsv}, if $1<s<\min \left\{ \frac{n}{n-1}, \frac{p_{1}}{p_{1}-p_{0}+1} \right\}$, then 
\begin{align*}
\left(\-int_{\B_r(x_0)} \V\left( \frac{|u-h|}{r}\right)\, dx \right)^{\frac{1}{s}} \leq \e \left\{ \left(\-int_{\B_r(x_0)} \V(|Du|)\, dx \right)^{\frac{1}{s}} + \V^{\frac{1}{s}}(\Upsilon)\right\}.
\end{align*}
\end{cor}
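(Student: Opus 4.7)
The plan is to derive the corollary by applying the Sobolev--Poincar\'e inequality of Theorem \ref{SP} to the function $w := u-h$ with the ``rescaled'' $N$-function $\psi := \V^{1/s}$, and then plugging in the estimate provided by Theorem \ref{approxdlsv}. The two restrictions on $s$ in the hypothesis serve exactly these two purposes: $s < p_{1}/(p_{1}-p_{0}+1)$ guarantees, via Lemma \ref{phiausilio}, that $\psi$ is itself an $N$-function satisfying Assumption \ref{assumption} (so that Theorem \ref{SP} is available with $\V$ replaced by $\psi$), while $s < n/(n-1)$ is exactly the admissible range for the Sobolev--Poincar\'e exponent when applied to $\psi$.

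Concretely, since $h=u$ on $\partial \B_{r}(x_{0})$, the difference $w$ has zero boundary trace, so the mean $(w)_{r}$ that appears on the left-hand side of Theorem \ref{SP} can be dropped by a standard Sobolev embedding argument for $W^{1,\psi}_{0}$. Applying Theorem \ref{SP} with $\V$ replaced by $\psi$ and with exponent $s$, and using $\psi^{s}=\V$ together with $\psi = \V^{1/s}$, yields
\begin{align*}
\left(\-int_{\B_{r}(x_{0})}\V\!\left(\frac{|u-h|}{r}\right) dx \right)^{1/s}
\;\leq\; c_{p}\-int_{\B_{r}(x_{0})} \V^{1/s}(|D(u-h)|)\, dx.
\end{align*}
Theorem \ref{approxdlsv} now bounds the right-hand side by $\e\{(\-int_{\B_{r}(x_{0})}\V(|Du|)\,dx)^{1/s} + \V^{1/s}(\Upsilon)\}$; invoking it with $\e/c_{p}$ in place of $\e$ absorbs the constant $c_{p}$ and produces exactly the stated inequality.

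The proof is therefore essentially a direct composition of the two theorems, mediated by the change of $N$-function $\psi=\V^{1/s}$ legitimized by Lemma \ref{phiausilio}. The only mildly technical point is the removal of the mean $(w)_{r}$ from the left-hand side of Theorem \ref{SP}; this is standard for functions with zero boundary trace (via extension by zero to a concentric larger ball and application of SP there), and is the reason why the corollary crucially uses that $u - h$ vanishes on $\partial \B_{r}(x_{0})$ rather than being an arbitrary function with the $\A$-almost-harmonicity property.
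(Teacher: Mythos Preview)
Your proposal is correct and follows exactly the approach indicated in the paper, which simply says ``Applying Theorem \ref{SP} we get the following result.'' You have correctly identified that the two constraints on $s$ serve precisely to make $\psi=\V^{1/s}$ an admissible $N$-function (via Lemma \ref{phiausilio}) and to put $s$ in the range of Theorem \ref{SP}; your remark about dropping the mean $(u-h)_r$ using the zero boundary trace of $u-h$ is the only point the paper leaves implicit, and your justification for it is the standard one.
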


\section{A Caccioppoli-type inequality}

\noindent
%In this section we prove a Caccioppoli inequality for weak solutions to \eqref{P} taking into account the dependence of the function $a$ on the coefficients $(x, u)$.   
A standard preliminary tool used to obtain partial regularity is the Caccioppoli inequality. Here we prove a version of this inequality for weak solutions to \eqref{P} taking into account the dependance of the function $a$ on the coefficients $(x, u)$.

\begin{thm}\label{Caccioppoli}
Let $u\in W^{1, \V}(\Omega, \R^{N})$ be a weak solution to \eqref{P} under the same assumptions of Theorem \ref{mainthm}. Then, for every ball $\B_{r}(x_{0})\Subset \Omega$ and for every affine function $\ell: \R^{n}\ri \R^{N}$ defined by 
\begin{align*}
\ell (x)= Q(x-x_{0}) + y_{0}, \quad x\in \R^{n}, 
\end{align*}
with $Q\in \R^{nN}$ and $y_{0}\in \R^{N}$, the following inequality holds
\begin{align*}
&\-int_{\B_{\frac{r}{2}}(x_{0})} \V_{|D\ell|}(|Du-D\ell|) \, dx \\
&\quad \leq c_{cacc}\left\{ \-int_{\B_{r}(x_{0})} \V_{|D\ell|} \left(\frac{|u-\ell|}{r}\right) \, dx+\V(|D\ell|) \left[ {\rm W}(r)+ \omega \left( \-int_{\B_{r}(x_{0})} |u-\ell(x_{0})|\, dx\right) +r\right]\right\}
\end{align*}
\end{thm}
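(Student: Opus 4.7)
We may assume $x_0=0$ after translation. Fix a cutoff $\eta\in \C^\infty_c(\B_r)$ with $\eta\equiv 1$ on $\B_{r/2}$, $0\le\eta\le 1$, $|D\eta|\le c/r$, and an exponent $s\ge p_1$. Since $\ell$ is affine, $\phi:=\eta^s(u-\ell)\in W^{1,\V}_{0}(\B_r,\R^N)$ is admissible in \eqref{ws}. The key normalisation is to introduce the constant matrix $A_0:=(a(\cdot,y_0,D\ell))_{x_0,r}\in \R^{nN}$, which depends neither on $x$ nor on $u$. Since $A_0$ is constant, $\int \langle A_0,D\phi\rangle\,dx=0$; subtracting this identity from \eqref{ws} tested with $\phi$ yields
\begin{align*}
\int \eta^s\langle a(x,u,Du)-A_0,Du-D\ell\rangle\,dx &= -s\int \eta^{s-1}\langle a(x,u,Du)-A_0,(u-\ell)\otimes D\eta\rangle\,dx\\
&\quad +\int \eta^s\langle b(x,u,Du),u-\ell\rangle\,dx.
\end{align*}

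\textbf{Decomposition and coercivity.} I split
$$
a(x,u,Du)-A_0=[a(x,u,Du)-a(x,u,D\ell)]+[a(x,u,D\ell)-a(x,y_0,D\ell)]+[a(x,y_0,D\ell)-A_0].
$$
The first bracket, used on the LHS, delivers the coercive lower bound $c\!\int \eta^s\V_{|D\ell|}(|Du-D\ell|)\,dx$ via Remark~\ref{rem1}; this is the term we keep. The second bracket is pointwise bounded by $\Lambda\,\omega(|u-y_0|)\V'(|D\ell|)$ thanks to $(H_5)$, and the third by $\Lambda\,\w_{x_0}(x,r)\V'(|D\ell|)$ thanks to $(H_6)$; both are moved to the RHS. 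For the cutoff integral, $(H_1)$ together with Remark~\ref{rem2} gives $|a(x,u,Du)-A_0|\lesssim\V'_{|D\ell|}(|Du-D\ell|)+\V'(|D\ell|)$, and $(H_7)$ combined with the same splitting gives the analogous bound for $|b(x,u,Du)|$.

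\textbf{Shifted Young and absorption.} Setting $\sigma:=|u-\ell|/r$ and extracting the factor $r$ from $|u-\ell|=r\sigma$ in the $b$-integral, I invoke the two Young-type inequalities
$$
\V'_{|D\ell|}(|Du-D\ell|)\,\sigma\le \delta\,\V_{|D\ell|}(|Du-D\ell|)+c_\delta\V_{|D\ell|}(\sigma),\qquad \V'(|D\ell|)\,\sigma\le \delta\,\V_{|D\ell|}(\sigma)+c_\delta\V(|D\ell|),
$$
the second one exploiting the shifted duality $\V^*_{|D\ell|}(\V'(|D\ell|))\sim \V(|D\ell|)$. The weight $\eta^{s-1}$ is redistributed so that every emerging $\V_{|D\ell|}(|Du-D\ell|)$-term carries a full $\eta^s$ (this is where the choice $s\ge p_1$ and the uniform lower/upper type $(p_0,p_1)$ of the shifted $\V_{|D\ell|}$ come into play); these terms are then absorbed into the coercive LHS by taking $\delta$ small. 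What remains on the RHS is bounded, up to constants depending only on the data, by
$$
c\!\int_{\B_r}\V_{|D\ell|}(\sigma)\,dx+c\,\V(|D\ell|)\!\int_{\B_r}\bigl[\,\w_{x_0}(x,r)+\omega(|u-y_0|)+r\,\bigr]\,dx.
$$

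\textbf{Averaging and main obstacle.} Dividing by $|\B_r|$ and using $\eta\equiv 1$ on $\B_{r/2}$, the LHS dominates $c\-int_{\B_{r/2}}\V_{|D\ell|}(|Du-D\ell|)\,dx$. On the RHS, the concavity of $\omega$ together with Jensen's inequality produces $\-int_{\B_r}\omega(|u-y_0|)\,dx\le \omega\bigl(\-int_{\B_r}|u-\ell(0)|\,dx\bigr)$, and $\-int_{\B_r}\w_{x_0}(x,r)\,dx\le {\rm W}(r)$ by the very definition of ${\rm W}$; the extracted factor $r$ yields the additive term $r\,\V(|D\ell|)$. Combining these gives the claimed Caccioppoli-type inequality. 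The main technical obstacle is the careful bookkeeping of the cutoff weights in the shifted Young inequalities: one must exploit both the $\Delta_2$-condition and the precise $(p_0,p_1)$-type of $\V_{|D\ell|}$ in order to ensure that every $\V_{|D\ell|}(|Du-D\ell|)$-contribution arising on the RHS carries a large enough power of $\eta$ to be absorbed by the coercive integral on the LHS.
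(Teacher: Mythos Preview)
Your proof is correct and follows essentially the same route as the paper: the same test function $\eta^{s}(u-\ell)$ (the paper takes $s=p_1$), the same normalising constant $A_0=(a(\cdot,\ell(x_0),D\ell))_{x_0,r}$, the identical three-term splitting of $a(x,u,Du)-A_0$, and the same use of Remarks~\ref{rem1}--\ref{rem2}, $(H_5)$--$(H_7)$, shifted Young inequalities, and Jensen for $\omega$. The only difference is presentational compression; in particular the paper writes out a few additional Young estimates (e.g.\ for the pairings of the $\omega$- and VMO-brackets against $Du-D\ell$) that you subsume under ``the two Young-type inequalities'', but these follow by the same mechanism.
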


\begin{proof}
Assume $x_{0}=0$. Take $\zeta$ a cut off function between $\B_{\frac{r}{2}}$ and $\B_{r}$, and define 
\begin{align*}
\eta= \zeta^{p_{1}}(u-\ell). 
\end{align*} 
Using $\eta$ as test function in \eqref{ws} we obtain
\begin{align}\begin{split}\label{eqc1}
&\-int_{\B_{r}} a(x, u, Du)\zeta^{p_{1}}(Du-D\ell)\, dx \\
&\qquad= -p_{1}\-int_{\B_{r}} a(x, u, Du) \zeta^{p_{1}-1}  (u-\ell)\otimes D\zeta \, dx + \-int_{\B_{r}} b(x, u, Du) \zeta^{p_{1}} (u-\ell)\, dx.  
\end{split}\end{align}
First, let us observe that 
\begin{align*}
\-int_{\B_{r}} (a(\cdot, \ell(0), D\ell))_{r} D\eta \, dx =0. 
\end{align*}
So \eqref{eqc1} can be rewritten as 
\begin{align*}
I&:=\-int_{\B_{r}} (a(x, u, Du)-a(x, u, D\ell), (Du-D\ell)) \zeta^{p_{1}}\, dx \\
&=-p_{1} \-int_{\B_{r}} (a(x, u, Du)-a(x, u, D\ell))\zeta^{p_{1}-1} (u-\ell)\otimes D\zeta dx - \-int_{\B_{r}} (a(x, u, D\ell)-a(x, \ell(0), D\ell), D\eta)\, dx \\
&\qquad - \-int_{\B_{r}} (a(x, \ell(0), D\ell)-(a(\cdot, \ell(0), D\ell))_{r}, D\eta)\, dx + \-int_{\B_{r}} b(x, u, Du) \zeta^{p_{1}} (u-\ell)\, dx \\
&=: I\!I+ I\!I\!I+ I\!V+ V. 
\end{align*}
Exploiting Remark \ref{rem1} we can see that 
\begin{align*}
I\geq \nu c \-int_{\B_{r}} \V_{|D\ell|} (|Du-D\ell|)\zeta^{p_{1}}\, dx, 
\end{align*}
where $c$ depends on $p_{0}, p_{1}$. 

\noindent
Combining Remark \ref{rem2} together with Young's inequality, \eqref{t1conj} and \eqref{t3} we obtain
\begin{align*}
I\!I&\leq c \Lambda \-int_{\B_{r}} \V'_{|D\ell|}(|Du-D\ell|) \zeta^{p_{1}-1} |D\zeta| |u-\ell|\, dx \\
&\le\delta \-int_{\B_{r}} (\V_{|D\ell|})^{*} \left(\V'_{|D\ell|}(|Du-D\ell|) \zeta^{p_{1}-1}\right)\, dx + c(\delta) \-int_{\B_{r}} \V_{|D\ell|} \left( \frac{|u-\ell|}{r}\right)\, dx \\
&\leq c\delta \-int_{\B_{r}} \V_{|D\ell|}(|Du-D\ell|) \zeta^{p_{1}} \, dx + c(\delta) \-int_{\B_{r}} \V_{|D\ell|} \left( \frac{|u-\ell|}{r}\right)\, dx.
\end{align*}
Now we estimate $I\!I\!I$, using $(H_{5})$ and the definition of $\eta$, so that
\begin{align*}
I\!I\!I \leq \Lambda \-int_{\B_{r}} \omega(|u-\ell(x_{0})|) \V'(|D\ell|) \left[ p_{1}\zeta^{p_{1}-1} |u-\ell| |D\zeta| + \zeta^{p_{1}} |Du-D\ell|\right]\, dx = I\!I\!I_{1}+ I\!I\!I_{2}.  
\end{align*}
Again the use of Young's inequality gives
\begin{align*}
I\!I\!I_{1}&\leq \delta \-int_{\B_{r}}(\V_{|D\ell|})^{*}(\omega(|u-\ell(0)|) \V'(|D\ell|) \zeta^{p_{1}-1}) \, dx + c(\delta) \-int_{\B_{r}} \V_{|D\ell|} \left( \frac{|u-\ell|}{r}\right)\, dx\\
&\leq c \delta \-int_{\B_{r}} \omega(|u-\ell(0)|) (\V_{|D\ell|})^{*}(\V'_{|D\ell|}(|D\ell|)) \, dx + c(\delta) \-int_{\B_{r}} \V_{|D\ell|} \left( \frac{|u-\ell|}{r}\right)\, dx\\
&\leq c\delta \V(|D\ell|) \omega\left( \-int_{\B_{r}} |u-\ell(0)|\, dx \right) + c(\delta) \-int_{\B_{r}} \V_{|D\ell|} \left( \frac{|u-\ell|}{r}\right)\, dx
\end{align*}
recalling that $\omega: [0, \infty)\ri [0, 1]$ is a non-decreasing concave function, and similarly
\begin{align*}
I\!I\!I_{2}&\leq  \delta \-int_{\B_{r}} \V_{|D\ell|} (|Du-D\ell|) \zeta^{p_{1}}\, dx + c(\delta) \-int_{\B_{r}} \omega(|u-\ell(0)|) \V(|D\ell|)\, dx \\
&\leq \delta \-int_{\B_{r}} \V_{|D\ell|} (|Du-D\ell|) \zeta^{p_{1}}\, dx +  c(\delta)  \V(|D\ell|) \omega\left( \-int_{\B_{r}} |u-\ell(0)|\, dx \right). 
\end{align*}
Using $(H_{6})$ we have 
\begin{align*}
I\!V \leq \Lambda \-int_{\B_{r}} \w_{0}(x, r) \V'(|D\ell|) \left[\zeta^{p_{1}} |Du-D\ell| + p_{1}\zeta^{p_{1}-1} |u-\ell| |D\zeta|\right]\, dx = I\!V_{1}+ I\!V_{2}. 
\end{align*}

\noindent
Now taking in mind that $\w_{0}\leq 2 \Lambda$ we can see 
\begin{align*}
I\!V_{1} &\leq \delta \-int_{\B_{r}} \V_{|D\ell|} (|Du-D\ell|)\zeta^{p_{1}} \, dx + c(\delta) \-int_{\B_{r}} (\V_{|D\ell|})^* \left( \w_{0}(x, r) \V'(|D\ell|)\right)\, dx \\
&\leq \delta \-int_{\B_{r}} \V_{|D\ell|} (|Du-D\ell|)\zeta^{p_{1}} \, dx + c(\delta) \-int_{\B_{r}} \w_{0}(x, r) \V(|D\ell|)\, dx \\
&\leq  \delta \-int_{\B_{r}} \V_{|D\ell|} (|Du-D\ell|)\zeta^{p_{1}} \, dx + c {\rm W}(r) \V(|D\ell|). 
\end{align*}
Analogously, 
\begin{align*}
I\!V_{2} &\leq \delta \-int_{\B_{r}} (\V_{|D\ell|})^{*} (\w_{0}(x, r)\V'(|D\ell|)\zeta^{p_{1}-1})\, dx + c(\delta) \-int_{\B_{r}} \V_{|D\ell|} \left( \frac{|u-\ell|}{r}\right)\, dx \\
&\leq c {\rm W}(r) \V(|D\ell|) + c(\delta) \-int_{\B_{r}} \V_{|D\ell|} \left( \frac{|u-\ell|}{r}\right)\, dx. 
\end{align*}
Finally, using $(H_{7})$ and the fact that $\V'(a+t)\leq 2 \V'_{a}(a+t)$, we obtain
\begin{align*}
V &\leq L \-int_{\B_{r}} \V'(|Du|) \zeta^{p_{1}}|u-\ell|\, dx \\
&\leq L \-int_{\B_{r}} \V'(|D\ell|+ |Du-D\ell|) \zeta^{p_{1}}|u-\ell|\, dx  \\
&\leq 2L \-int_{\B_{r}} \V'_{|D\ell|}(|D\ell|+ |Du-D\ell|) \zeta^{p_{1}}|u-\ell|\, dx  \\ 
&= \frac{2L}{|\B_{r}|} \int_{\B_{r}\cap \{ |Du-D\ell|>|D\ell|\}} \V'_{|D\ell|}(|D\ell|+ |Du-D\ell|) \zeta^{p_{1}}|u-\ell|\, dx \\
&\qquad + \frac{2L}{|\B_{r}|} \int_{\B_{r}\cap \{ |Du-D\ell|\leq |D\ell|\}} \V'_{|D\ell|}(|D\ell|+ |Du-D\ell|) \zeta^{p_{1}}|u-\ell|\, dx\\
&= V_{1}+ V_{2}. 
\end{align*} 
Exploiting the $\Delta_{2}$-condition we deduce
\begin{align*}
V_{1} &\le c\int_{\B_{r}\cap \{ |Du-D\ell|>|D\ell|\}} \V'_{|D\ell|}(|Du-D\ell|) \zeta^{p_{1}}|u-\ell|\, dx\\
&\leq \delta \-int_{\B_{r}} \V_{|D\ell|} (|Du-D\ell|)\zeta^{p_{1}} \, dx + c(\delta)\-int_{\B_{r}} \V_{|D\ell|}(|u-\ell|)\, dx \\
&\leq \delta \-int_{\B_{r}} \V_{|D\ell|} (|Du-D\ell|)\zeta^{p_{1}} \, dx + c(\delta) r\-int_{\B_{r}} \V_{|D\ell|}\left(\frac{|u-\ell|}{r}\right)\, dx 
\end{align*}
and also
\begin{align*}
V_{2}& \le c\, r \-int_{\B_{r}} \V'_{|D\ell|}(|D\ell|) \zeta^{p_{1}} \frac{|u-\ell|}{r}\, dx \\
&\leq r \-int_{\B_{r}} \V_{|D\ell|}\left( \frac{|u-\ell|}{r}\right) \, dx + r\V(|D\ell|). 
\end{align*}
The terms multiplied by $\delta$ can be absorbed into the left hand side through a suitable choice of $\delta$, thus arriving to the desired estimate. Observe that $c_{cacc}$ depends on $\nu,\Lambda, L$ and $p_0,p_1$.
\end{proof}

\noindent
Choosing $\ell=u_{0}$ in Theorem \ref{Caccioppoli}, we get the following zero-order Caccioppoli inequality. 
\begin{cor}\label{cacczero}
Let $u\in W^{1, \V}(\Omega, \R^{N})$ be a weak solution to \eqref{P} under the same assumptions of Theorem \ref{mainthm}. Then, for every ball $\B_{r}(x_{0})\Subset \Omega$ and for every $u_{0}\in \R^{N}$ we have 
\begin{align}\label{Caccioppoli-zero}
\-int_{\B_{\frac{r}{2}}(x_{0})} \V(|Du|)\, dx \leq c_{cacc} \-int_{\B_{r}(x_{0})} \V \left(\frac{|u-u_{0}|}{r}\right)\, dx.  
\end{align}
\end{cor}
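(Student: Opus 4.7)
The plan is to apply Theorem \ref{Caccioppoli} directly with the (constant) affine function $\ell\equiv u_{0}$, i.e.\ the choice $Q=0$ and $y_{0}=u_{0}$. With this $\ell$ one has $D\ell\equiv 0$ and $\ell(x_{0})=u_{0}$, so the two terms of Caccioppoli's inequality simplify dramatically, and the right-hand side reduces to the single term $\-int_{\B_{r}(x_{0})}\V(|u-u_{0}|/r)\,dx$.

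First I would verify that the shifted $N$-function degenerates correctly at $a=0$. By the definition $\V'_{a}(t)=\V'(a+t)\,t/(a+t)$, setting $a=0$ gives $\V'_{0}(t)=\V'(t)$ for $t>0$, hence $\V_{0}=\V$. Consequently
$$
\V_{|D\ell|}(|Du-D\ell|)=\V_{0}(|Du|)=\V(|Du|),\qquad \V_{|D\ell|}\!\left(\tfrac{|u-\ell|}{r}\right)=\V\!\left(\tfrac{|u-u_{0}|}{r}\right),
$$
so the left-hand side of Theorem \ref{Caccioppoli} coincides with $\-int_{\B_{r/2}(x_{0})}\V(|Du|)\,dx$ and the main contribution on the right-hand side is the desired one.

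Next I would observe that the ``coefficient-term'' multiplying $\V(|D\ell|)$ in Theorem \ref{Caccioppoli} disappears: since $D\ell=0$, we have $\V(|D\ell|)=\V(0)=0$, and therefore the whole additional contribution involving ${\rm W}(r)$, $\omega(\-int_{\B_r(x_0)}|u-\ell(x_{0})|\,dx)$ and $r$ vanishes identically. Hence
$$
\-int_{\B_{r/2}(x_{0})}\V(|Du|)\,dx \leq c_{cacc}\-int_{\B_{r}(x_{0})}\V\!\left(\tfrac{|u-u_{0}|}{r}\right)\,dx,
$$
which is exactly the inequality \eqref{Caccioppoli-zero}, with the same constant $c_{cacc}=c_{cacc}(\nu,\Lambda,L,p_{0},p_{1})$ provided by Theorem \ref{Caccioppoli}.

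There is essentially no hard step here: the corollary is a direct specialization of the affine Caccioppoli inequality to the constant case. The only point worth spelling out explicitly is the identification $\V_{0}=\V$ (together with $\V(0)=0$), which is what kills every term depending on the VMO-modulus ${\rm W}$, on the continuity modulus $\omega$, and on the lower-order error $r\V(|D\ell|)$. No further ingredient beyond Theorem \ref{Caccioppoli} is needed.
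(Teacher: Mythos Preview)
Your proof is correct and follows exactly the paper's approach: the corollary is obtained by specializing Theorem \ref{Caccioppoli} to the constant affine function $\ell\equiv u_{0}$, so that $\V_{0}=\V$ and $\V(|D\ell|)=\V(0)=0$ eliminate the extra terms.
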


\noindent
From Corollary \ref{cacczero} it is possible to deduce in a standard way with the help of Gehring's Lemma the following higher integrability result (see Theorem 2.5 in \cite{CO}).

\begin{thm}\label{higherintegrability}
There exists $\gamma>1$ such that $\V(|Du|)\in L^{\gamma}_{loc}(\Omega)$ and 
\begin{align*}
\left(\-int_{\B_{\frac{r}{2}}(x_{0})} \V(|Du|)^{\gamma}\, dx \right)^{\frac{1}{\gamma}} \leq c_h\, \-int_{\B_{r}(x_{0})} \V(|Du|)\, dx
\end{align*}
for a suitable constant $c_h=c(n, N, p_{0}, p_{1}, \nu, \Lambda)>0$. 
\end{thm}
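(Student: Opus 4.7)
The plan is to derive a reverse H\"older inequality for $\V(|Du|)$ from the zero-order Caccioppoli estimate of Corollary \ref{cacczero}, combined with a sub-$\V$ Sobolev--Poincar\'e inequality, and then invoke Gehring's lemma to upgrade it to the claimed higher integrability.

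First, for a ball $\B_r(x_0)\Subset\Omega$, apply Corollary \ref{cacczero} with the choice $u_0=(u)_{x_0,r}$ to obtain
$$
\-int_{\B_{r/2}(x_0)} \V(|Du|)\,dx \leq c_{cacc}\-int_{\B_r(x_0)} \V\!\left(\frac{|u-(u)_{x_0,r}|}{r}\right)\,dx.
$$
Next, the right-hand side is controlled via a ``sub-$\V$'' Sobolev--Poincar\'e inequality: thanks to Assumption \ref{assumption}, $\V$ has $(p_0,p_1)$-growth and $\V,\V^*\in\Delta_2$, and this allows one to establish
$$
\-int_{\B_r(x_0)} \V\!\left(\frac{|u-(u)_{x_0,r}|}{r}\right)\,dx \leq c_{sp}\left(\-int_{\B_r(x_0)} \V(|Du|)^{\theta}\,dx\right)^{1/\theta}
$$
for some $\theta=\theta(n,p_0,p_1)\in(0,1)$. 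This is the Orlicz-valued counterpart of the classical estimate $\|u-(u)_r\|_{L^p}\leq c\,r\|Du\|_{L^{p_*}}$ with $p_*=np/(n+p)<p$, and follows from the power-case Sobolev--Poincar\'e by splitting the sets $\{|Du|\le 1\}$ and $\{|Du|>1\}$ and using the comparability relations \eqref{t1}.

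Chaining the two displays yields, on every $\B_r(x_0)\Subset\Omega$, the reverse H\"older inequality
$$
\-int_{\B_{r/2}(x_0)} \V(|Du|)\,dx \leq c\left(\-int_{\B_r(x_0)} \V(|Du|)^{\theta}\,dx\right)^{1/\theta},\qquad \theta\in(0,1).
$$
Setting $g=\V(|Du|)^{\theta}$ this reads $(\-int_{\B_{r/2}} g^{1/\theta}\,dx)^{\theta}\leq c\,\-int_{\B_r} g\,dx$, which is exactly the hypothesis of Gehring's lemma. The lemma then produces an exponent $\gamma>1$ together with the quantitative estimate of the statement, with constant $c_h$ depending on $n, N, p_0, p_1, \nu, \Lambda$ through $c_{cacc}$ and $c_{sp}$.

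The only non-routine ingredient is the sub-$\V$ Sobolev--Poincar\'e inequality, which is by now standard in the Orlicz setting (see, e.g., \cite{CO}); once that is in hand, the rest is the textbook Caccioppoli--Gehring chain. Crucially, the $(x,u)$-dependence of $a$ causes no extra trouble here, because its contributions are already absorbed in the zero-order Caccioppoli estimate of Corollary \ref{cacczero}.
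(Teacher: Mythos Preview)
Your proposal is correct and follows exactly the approach the paper indicates: the paper does not give a detailed proof but simply states that the result follows ``in a standard way'' from Corollary \ref{cacczero} together with Gehring's Lemma, referring to Theorem 2.5 in \cite{CO}. Your outline (zero-order Caccioppoli with $u_0=(u)_{x_0,r}$, then a sub-$\V$ Sobolev--Poincar\'e to get a reverse H\"older inequality, then Gehring) is precisely that standard chain, and you even cite \cite{CO} for the needed Orlicz Sobolev--Poincar\'e input.
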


\section{Approximate harmonicity by linearization}

\noindent
For $x_{0}\in \Omega$ and $r\in (0, {\rm dist}(x_{0}, \partial \Omega))$ we define the following excess functionals
\begin{align*}
\Phi(x_{0}, r) =\-int_{\B_{r}(x_{0})} \V_{|D\ell_{x_{0}, r}|} \left( \frac{|u- \ell_{x_{0}, r}|}{r}\right)\, dx
\end{align*}
and 
\begin{align*}
\Phi^{*}(x_{0}, r)= \Phi(x_{0}, r) + \h(x_{0}, r)^{\beta_{1}} \V(|D\ell_{x_{0}, r}|), 
\end{align*}
where
\begin{align*}
\h(x_{0}, r)= {\rm W}(r)+ \omega \left( \-int_{\B_{r}(x_{0})} |u-\ell_{x_0,r}(x_{0})|\, dx\right) +r, 
\end{align*}
and $\beta_{1}= \min\left\{\left(1-\frac{1}{\gamma}\right) \left(1-\frac{1}{p_{0}}\right), \frac{\beta_{0}+1}{2}\right\}$, with $\beta_0$ defined in $(H_4)$ and $\gamma$ given in Theorem \ref{higherintegrability}. 
Then we set
\begin{align*}
\Upsilon= \sqrt{\frac{\Phi^{*}(x_{0}, r)}{\V(|D\ell_{x_{0}, r}|)}}, 
\end{align*}
and for $|D\ell_{x_{0}, r}|\neq 0$ we define
\begin{align*}
\A= \frac{\left(D_{\xi} a( \cdot, \ell_{x_{0}, r}(x_{0}), D\ell_{x_{0}, r}) \right)_{x_{0}, r}}{\V''(|D\ell_{x_{0}, r}|)}. 
\end{align*}
Note that from hypotheses $(H_{2})$ and $(H_{3})$ it follows that for any $\xi, \eta \in \R^{nN}$ 
\begin{align*}
(\A \xi, \xi)\geq \nu |\xi|^{2} \quad \mbox{ and } \quad |\A (\xi, \eta)|\leq \Lambda |\xi| |\eta|. 
\end{align*}

\begin{comment}
For $x_{0}\in \Omega$ and $r\in (0, {\rm dist}(x_{0}, \partial \Omega))$ we define the following excess functional
\begin{align*}
\Phi^{*}(x_{0}, r)= \Phi(x_{0}, r) + \h(x_{0}, r)^{\beta_{1}} \V(|D\ell_{x_{0}, r}|), 
\end{align*}
where
\begin{align*}
\h(x_{0}, r)= {\rm W}(r)+ \omega \left( \-int_{\B_{r}(x_{0})} |u-\ell_{x_0,r}(x_{0})|\, dx\right) +r, 
\end{align*}
and set
\begin{align*}
\Upsilon= \sqrt{\frac{\Phi^{*}(x_{0}, r)}{\V(|D\ell_{x_{0}, r}|)}}. 
\end{align*}
\end{comment}

\begin{lem}\label{lem3.9Bog}
Suppose that 
\begin{align}\label{eq15}
\Phi(x_{0}, r)\leq \V(|D\ell_{x_{0}, r}|), 
\end{align}
then $u-\ell_{x_0,r}$ is approximately $\A$-harmonic on the ball $\B_{\frac{r}{4}}(x_{0})$ in the sense that 
\begin{align*}
\left| \-int_{\B_{\frac{r}{4}}(x_{0})} \A(D(u- \ell_{x_{0}, r}), D\zeta) \, dx \right| \leq c_0 \|D\zeta\|_{\infty} |D\ell_{x_{0}, r}| \left\{\frac{\Phi(x_{0}, r)}{\V(|D\ell_{x_{0}, r}|)} + \left[ \frac{\Phi(x_{0}, r)}{\V(|D\ell_{x_{0}, r}|)}\right]^{\frac{\beta_{0}+1}{2}} + \h(x_{0}, r)^{\beta_{1}} \right\}
\end{align*}
holds for all $\zeta \in \C^{\infty}_{c}(\B_{\frac{r}{4}}(x_{0}), \R^{N})$, where $\beta_{1}= \min\left\{\left(1-\frac{1}{\gamma}\right) \left(1-\frac{1}{p_{0}}\right), \frac{\beta_{0}+1}{2}\right\}$ and $\gamma$ is given in Theorem \ref{higherintegrability}. 
\end{lem}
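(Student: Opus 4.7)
The plan is to use a first-order Taylor linearization of $a$ around $D\ell_{x_{0},r}$ at base point $\ell_{x_{0},r}(x_{0})$, averaged in the spatial variable, to expose the constant tensor $\mathcal{A}$, and then to use the weak formulation \eqref{ws} to move the residuals to the right-hand side where they are controlled by (H_4)--(H_7). Concretely, set $\bar a(\xi):=(a(\cdot,\ell_{x_{0},r}(x_{0}),\xi))_{x_{0},r}$; differentiating under the integral (justified by (H_3)) gives $D_{\xi}\bar a(D\ell_{x_{0},r})=\V''(|D\ell_{x_{0},r}|)\mathcal{A}$, and Taylor's formula yields
\begin{align*}
\bar a(Du)-\bar a(D\ell_{x_{0},r})=D_{\xi}\bar a(D\ell_{x_{0},r})(Du-D\ell_{x_{0},r})+R,
\end{align*}
with $R:=\int_{0}^{1}[D_{\xi}\bar a(D\ell_{x_{0},r}+t(Du-D\ell_{x_{0},r}))-D_{\xi}\bar a(D\ell_{x_{0},r})](Du-D\ell_{x_{0},r})\,dt$. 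Since both $\bar a(D\ell_{x_{0},r})$ and $D_{\xi}\bar a(D\ell_{x_{0},r})D\ell_{x_{0},r}$ are constant tensors, they test to zero against $D\zeta$; combining the Taylor identity with \eqref{ws} produces the master identity
\begin{align*}
\V''(|D\ell_{x_{0},r}|)\-int_{\B_{r/4}(x_{0})}\mathcal{A}(D(u-\ell_{x_{0},r}),D\zeta)\,dx
=&\-int_{\B_{r/4}(x_{0})}[\bar a(Du)-a(x,u,Du)]:D\zeta\,dx\\
&+\-int_{\B_{r/4}(x_{0})}b(x,u,Du)\zeta\,dx-\-int_{\B_{r/4}(x_{0})}R:D\zeta\,dx.
\end{align*}

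For the first integrand, the splitting $\bar a(Du)-a(x,u,Du)=[\bar a(Du)-a(x,\ell_{x_{0},r}(x_{0}),Du)]+[a(x,\ell_{x_{0},r}(x_{0}),Du)-a(x,u,Du)]$ is controlled by (H_6) and (H_5), respectively, giving the pointwise bound $\Lambda(\w_{x_{0}}(x,r)+\omega(|u-\ell_{x_{0},r}(x_{0})|))\V'(|Du|)$. The $b$-integrand is controlled via (H_7) together with the Poincar\'e-type bound $\|\zeta\|_{\infty}\le r\|D\zeta\|_{\infty}$. For $R$, I split $\B_{r/4}(x_{0})$ into $\{|Du-D\ell_{x_{0},r}|\le\tfrac12|D\ell_{x_{0},r}|\}$, where (H_4) yields the decay $|R|\lesssim\V''(|D\ell_{x_{0},r}|)|D\ell_{x_{0},r}|^{-\beta_{0}}|Du-D\ell_{x_{0},r}|^{\beta_{0}+1}$, and its complement, where (H_3) combined with \eqref{t6} produces the crude bound $|R|\lesssim\V'_{|D\ell_{x_{0},r}|}(|Du-D\ell_{x_{0},r}|)$.

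To convert these pointwise bounds into the claimed form, I divide throughout by $\V''(|D\ell_{x_{0},r}|)\sim\V(|D\ell_{x_{0},r}|)/|D\ell_{x_{0},r}|^{2}$ and exploit $\V'(|Du|)\lesssim\V'(|D\ell_{x_{0},r}|)+\V'_{|D\ell_{x_{0},r}|}(|Du-D\ell_{x_{0},r}|)$. The terms linear in $\V'(|D\ell_{x_{0},r}|)$ combine, after averaging and assuming $\h(x_{0},r)\le 1$ (the complementary regime being trivial), into the factor $\|D\zeta\|_{\infty}|D\ell_{x_{0},r}|\h(x_{0},r)^{\beta_{1}}$. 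The Taylor piece in the small regime, treated by H\"older and the Caccioppoli inequality (Theorem~\ref{Caccioppoli}) together with \eqref{eq15}, produces exactly the exponent $(\beta_{0}+1)/2$ on $\Phi/\V(|D\ell_{x_{0},r}|)$. The mixed contributions involving $(\w_{x_{0}}+\omega)\V'_{|D\ell_{x_{0},r}|}(|Du-D\ell_{x_{0},r}|)$ (as well as the corresponding $b$-piece) are controlled by H\"older combined with the higher integrability granted by Theorem~\ref{higherintegrability}: the Gehring exponent extracts a factor $({\rm W}(r)+\omega(\cdots)+r)^{1-1/\gamma}$, and a further application of Caccioppoli together with \eqref{eq15} converts the remaining Orlicz-shifted integral into a power of $\V(|D\ell_{x_{0},r}|)$, producing the exponent $(1-1/\gamma)(1-1/p_{0})$. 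Taking the minimum with $(\beta_{0}+1)/2$ yields $\beta_{1}$ and hence the $\h(x_{0},r)^{\beta_{1}}$ contribution.

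The main technical obstacle is the calibration of the H\"older exponents: the Caccioppoli inequality itself contains an additive term of order $\V(|D\ell_{x_{0},r}|)\h(x_{0},r)$ on its right-hand side, and one has to pay the Gehring price $1-1/\gamma$ on the VMO/continuity factor precisely to keep the final exponent $\beta_{1}$ positive and to give every surviving term the correct normalization in $|D\ell_{x_{0},r}|$ after division by $\V''(|D\ell_{x_{0},r}|)$.
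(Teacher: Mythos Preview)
Your proposal is correct and follows essentially the same approach as the paper: the identical Taylor/linearization decomposition, the same close/far splitting of the remainder via $(H_4)$/$(H_3)$, and the same use of the weak formulation together with $(H_5)$--$(H_7)$, H\"older, the Gehring-type higher integrability, and the Caccioppoli inequality to produce the exponent $\beta_{1}$. The only organizational difference is that the paper applies the $(\V^{*})^{-1}$ trick directly to $\displaystyle\-int \w_{x_{0}}(x,r)\,\V'(|Du|)\,dx$ (and its $\omega$-analogue) without first splitting $\V'(|Du|)\lesssim \V'(|D\ell_{x_{0},r}|)+\V'_{|D\ell_{x_{0},r}|}(|Du-D\ell_{x_{0},r}|)$, which is slightly cleaner since the available higher integrability is for $\V(|Du|)$ rather than for the shifted quantity.
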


\begin{proof}
Assume that $x_{0}=0$ for simplicity. Take $\zeta \in \C^{\infty}_{c}(\B_{\frac{r}{4}}, \R^{N})$ and set $v= u-\ell_{r}$. Then, 
\begin{align*}
&\-int_{\B_{\frac{r}{4}}} \left( \A Dv, D\zeta \right)\, dx \\
&= \frac{1}{\V''(|D\ell_{r}|)} \-int_{\B_{\frac{r}{4}}} \int_{0}^{1} \left( \left[(D_{\xi}a(\cdot, \ell_{r}(0), D\ell_{r}))_{r} - (D_{\xi}a(\cdot, \ell_{r}(0), D\ell_{r} +tDv))_{r}\right]Dv, D\zeta  \right)\, dx \\
&\quad + \frac{1}{\V''(|D\ell_{r}|)} \-int_{\B_{\frac{r}{4}}} \int_{0}^{1} \left( (D_{\xi}a(\cdot, \ell_{r}(0), D\ell_{r} +tDv))_{r}Dv, D\zeta  \right)\, dx \\
&=I+I\!I. 
\end{align*} 

\noindent
{\it Estimate of $I$.} First we define the sets
%Now we estimate each integral; we first define the sets
\begin{align*}
\X=\B_{\frac{r}{4}} \cap \left\{|Du-D\ell_{r}|\geq \frac{|D\ell_{r}|}{2}\right\} \quad \mbox{ and } \quad \Y=\B_{\frac{r}{4}} \cap \left\{|Du-D\ell_{r}|< \frac{|D\ell_{r}|}{2}\right\}, 
\end{align*}
so that we can split $I$ as follows: 
\begin{align*}
I&= \frac{1}{\V''(|D\ell_{r}|)|\B_{\frac{r}{4}}|} \int_{\X}  \int_{0}^{1} \left( \left[(D_{\xi}a(\cdot, \ell_{r}(0), D\ell_{r}))_{r} - (D_{\xi}a(\cdot, \ell_{r}(0), D\ell_{r} +tDv))_{r}\right]Dv, D\zeta  \right)\, dt \,dx \\
&+\frac{1}{\V''(|D\ell_{r}|)|\B_{\frac{r}{4}}|} \int_{\Y}  \int_{0}^{1} \left( \left[(D_{\xi}a(\cdot, \ell_{r}(0), D\ell_{r}))_{r} - (D_{\xi}a(\cdot, \ell_{r}(0), D\ell_{r} +tDv))_{r}\right]Dv, D\zeta  \right)\, dt \, dx \\
&=I_{1}+ I_{2}. 
\end{align*}
Using $(H_{3})$ we have
\begin{align*}
I_{1}&=\frac{1}{\V''(|D\ell_{r}|)} \-int_{\B_{\frac{r}{4}}}\chi_{\X}  \int_{0}^{1} \left( \left[(D_{\xi}a(\cdot, \ell_{r}(0), D\ell_{r}))_{r} - (D_{\xi}a(\cdot, \ell_{r}(0), D\ell_{r} +tDv))_{r}\right]Dv, D\zeta  \right)\, dt \, dx \\
&\leq \frac{2\Lambda}{\V''(|D\ell_{r}|)}\-int_{\B_{\frac{r}{4}}}\chi_{\X}  \int_{0}^{1} \left[\V''(|D\ell_{r}|) + \V''(|D\ell_{r} +tDv|)\right] |Dv| |D\zeta|\, dt \,dx.  
\end{align*}
We point out that from \eqref{t3} and Lemma \ref{lem2} we obtain
\begin{align*}%\label{eq21}
\int_{0}^{1} \left[\V''(|D\ell_{r}|)+ \V''(|D\ell_{r}+t Dv|) \right] \, dt &\sim \int_{0}^{1} \left[\frac{\V'(|D\ell_{r}|)}{|D\ell_{r}|}+ \frac{\V'(|D\ell_{r}+t (Du- D\ell_{r})|)}{|D\ell_{r}+ t(Du- D\ell_{r})|} \right] \, dt \nonumber\\
&\sim \frac{\V'(|D\ell_{r}|)}{|D\ell_{r}|}+ \frac{\V'(|D\ell_{r}|+|Du|)}{|D\ell_{r}|+|Du|} \nonumber \\
&\leq \frac{\V'(|D\ell_{r}|) + \V'(|D\ell_{r}|+|Du|)}{|D\ell_{r}|} \nonumber\\
&\leq 2\frac{\V'(|D\ell_{r}|+|Du|)}{|D\ell_{r}|};
\end{align*}
then, recalling that on $\X$ we have $2|Du-D\ell_{r}|\geq |D\ell_{r}|$, we can say that $\V'(|D\ell_{r}|+|Du|)\lesssim \V'(|Du-D\ell_{r}|)$, so exploiting \eqref{t3}, $\V_{a}(t)\sim \V(t)$ for $t\geq a$ and Theorem \ref{Caccioppoli} we obtain 
\begin{align*}
I_{1}&\le c \frac{1}{\V''(|D\ell_{r}|)} \-int_{\B_{\frac{r}{4}}} \chi_{\X} \frac{\V'(|D\ell_{r}|+|Du|)}{|D\ell_{r}|} |Du-D\ell_{r}|\, dx\|D\zeta\|_{\infty}\\
&\le c\frac{|D\ell_{r}|}{\V(|D\ell_{r}|)} \-int_{\B_{\frac{r}{4}}} \V_{|D\ell_{r}|}(|Du-D\ell_{r}|)\, dx \|D\zeta\|_{\infty}\\
&\le c \frac{|D\ell_{r}|}{\V(|D\ell_{r}|)} \left( \Phi(r) + \V(|D\ell_{r}|)\h(r)\right) \|D\zeta\|_{\infty}\\
&= c|D\ell_{r}| \left( \frac{\Phi(r)}{\V(|D\ell_{r}|)} + \h(r)\right)\|D\zeta\|_{\infty}. 
\end{align*}
To estimate $I_{2}$ we use $(H_{4})$, H\"older inequality, \eqref{t6}, Theorem \ref{Caccioppoli} 
\begin{align*}
I_{2}&=\frac{1}{\V''(|D\ell_{r}|)|\B_{\frac{r}{4}}|} \-int_{\B_{r}} \int_{0}^{1} \int_{\Y} \left( \left[D_{\xi}a(y, \ell_{r}(0), D\ell_{r}) - D_{\xi}a(y, \ell_{r}(0), D\ell_{r} +tDv(x))\right]Dv, D\zeta \right)\, dx\, dt\, dy\\
&\le c\frac{1}{|\B_{\frac{r}{4}}|} \int_{0}^{1} \int_{\Y} \left( \frac{t|Du-D\ell_{r}|}{|D\ell_{r}|} \right)^{\beta_{0}} |Du-D\ell_{r}|\, dx \, dt\|D\zeta\|_{\infty}\\
&\le c|D\ell_{r}| \-int_{\B_{\frac{r}{4}}} \chi_{\Y} \left( \frac{|Du-D\ell_{r}|}{|D\ell_{r}|}\right)^{\beta_{0}+1}\, dx\|D\zeta\|_{\infty} \\
&\le c |D\ell_{r}| \-int_{\B_{\frac{r}{4}}} \chi_{\Y} \left( \frac{\V'(|D\ell_{r}|)|Du-D\ell_{r}|^{2}}{\V'(|D\ell_{r}|) |D\ell_{r}|^{2}}\right)^{\frac{\beta_{0}+1}{2}}\, dx\|D\zeta\|_{\infty} \\
&\le c |D\ell_{r}| \-int_{\B_{\frac{r}{4}}} \chi_{\Y} \left[ \frac{\V'(|D\ell_{r}| + |Du-D\ell_{r}|) |Du-D\ell_{r}|^{2}}{\V(|D\ell_{r}|) (|D\ell_{r}|+ |Du-D\ell_{r}|)} \right]^{\frac{\beta_{0}+1}{2}}\, dx \|D\zeta\|_{\infty}\\
&\le c |D\ell_{r}| \-int_{\B_{\frac{r}{4}}} \chi_{\Y} \left[ \frac{\V_{|D\ell_{r}|}(|Du-D\ell_{r}|)}{\V(|D\ell_{r}|)}\right]^{\frac{\beta_{0}+1}{2}}\, dx\|D\zeta\|_{\infty} \\
&\le c |D\ell_{r}| \left( \frac{1}{\V(|D\ell_{r}|)} \-int_{\B_{\frac{r}{4}}} \V_{|D\ell_{r}|}(|Du-D\ell_{r}|) \, dx \right)^{\frac{\beta_{0}+1}{2}}\|D\zeta\|_{\infty}\\
&\le c |D\ell_{r}| c_{cacc} \left(\frac{\Phi(r)}{\V(|D\ell_{r}|)} + \h(r)\right)^{\frac{\beta_{0}+1}{2}}\|D\zeta\|_{\infty}.  
\end{align*}
%Now we estimate $I\!I$.  
{\it Estimate of $I\!I$.}
Using
\begin{align*}
\-int_{\B_{\frac{r}{4}}} a(x, u, Du)D\zeta\, dx = \-int_{\B_{\frac{r}{4}}} b(x, u, Du)\zeta\, dx \quad \mbox{ and } \quad \-int_{\B_{\frac{r}{4}}} \left((a(\cdot, \ell_{r}(0), D\ell_{r}))_{r}, D\zeta \right)\, dx =0
\end{align*}
we obtain 
\begin{align*}
I\!I&=\frac{1}{\V''(|D\ell_{r}|)} \-int_{\B_{\frac{r}{4}}} \int_{0}^{1} \left( \frac{d}{dt} (a(\cdot, \ell_{r}(0), D\ell_{r} +tDv))_{r}, D\zeta  \right)\, dx\\
&=\frac{1}{\V''(|D\ell_{r}|)} \-int_{\B_{\frac{r}{4}}} \left( (a(\cdot, \ell_{r}(0) , Du))_{r}- (a(\cdot, \ell_{r}(0), D\ell_{r}))_{r}, D\zeta \right)\, dx \\
&=\frac{1}{\V''(|D\ell_{r}|)} \-int_{\B_{\frac{r}{4}}} \left( (a(\cdot, \ell_{r}(0) , Du))_{r}- a(x, \ell_{r}(0), Du), D\zeta \right)\, dx \\
&\qquad + \frac{1}{\V''(|D\ell_{r}|)} \-int_{\B_{\frac{r}{4}}} \left( a(x, \ell_{r}(0), Du) - a(x, u, Du), D\zeta \right)\, dx \\
&\qquad + \frac{1}{\V''(|D\ell_{r}|)} \-int_{\B_{\frac{r}{4}}} b(x, u, Du)\zeta\, dx \\
&=I\!I_{1}+ I\!I_{2} + I\!I_{3}. 
\end{align*}
%Now, from $(H_{6})$ and recalling that 
Combining $(H_{6})$ together with $\w_{0}(x, r)\leq 2 \Lambda$, Theorem \ref{higherintegrability}, \eqref{Caccioppoli-zero} and \eqref{t5conj} we have
\begin{align*}
I\!I_{1} &\leq \frac{\Lambda}{\V''(|D\ell_{r}|)} \-int_{\B_{\frac{r}{4}}} \w_{0}(x, r) \V'(|Du|) |D\zeta|\, dx \\
&\le \frac{\Lambda}{\V''(|D\ell_{r}|)} (\V^{*})^{-1}\circ \V^{*} \left( \-int_{\B_{\frac{r}{4}}} \w_{0}(x, r) \V'(|Du|) \, dx \right) \|D\zeta \|_{\infty} \\
&\leq c\frac{\Lambda}{\V''(|D\ell_{r}|)} (\V^{*})^{-1} \left( \-int_{\B_{\frac{r}{4}}} \V^{*} \left(  \w_{0}(x, r) \V'(|Du|) \right)\, dx  \right) \|D\zeta \|_{\infty} \\
&\le c \frac{1}{\V''(|D\ell_{r}|)} (\V^{*})^{-1} \left( \-int_{\B_{\frac{r}{4}}} \w_{0}(x, r) \V(|Du|) \, dx  \right) \|D\zeta \|_{\infty} \\
&\leq \frac{1}{\V''(|D\ell_{r}|)} (\V^{*})^{-1} \left[\left( \-int_{\B_{\frac{r}{4}}} \w_{0}(x, r)^{\frac{\gamma}{\gamma-1}}\, dx \right)^{1-\frac{1}{\gamma}}\left( \-int_{\B_{\frac{r}{4}}} \V(|Du|)^{\gamma}\, dx \right)^{\frac{1}{\gamma}} \right]\|D\zeta \|_{\infty}\\
&\le c \frac{1}{\V''(|D\ell_{r}|)} (\V^{*})^{-1} \left[ {\rm W}(r)^{1-\frac{1}{\gamma}} \-int_{\B_{\frac{r}{2}}} \V(|Du|)\, dx \right]\|D\zeta \|_{\infty}\\
&\le c\frac{1}{\V''(|D\ell_{r}|)} (\V^{*})^{-1} \left[ {\rm W}(r)^{1-\frac{1}{\gamma}} \-int_{\B_{\frac{r}{2}}} \V\left(\frac{|u-(u)_{r}|}{r}\right)\, dx \right]\|D\zeta \|_{\infty}\\
&\le c\frac{1}{\V''(|D\ell_{r}|)} \left[ {\rm W}(r)^{1-\frac{1}{\gamma}} \right]^{1-\frac{1}{p_{0}}} (\V^{*})^{-1} \left[ \-int_{\B_{\frac{r}{2}}} \V\left(\frac{|u-(u)_{r}|}{r}\right)\, dx \right]\|D\zeta \|_{\infty}. 
\end{align*}
We point out that from \eqref{eq15} we get
\begin{align*}
\-int_{\B_{r}} \V\left( \frac{|u- (u)_{r}|}{r}\right)\, dx &\leq \-int_{\B_{r}} \V\left( \frac{|u- \ell_{r}|}{r} + \frac{|\ell_{r}- (u)_{r}|}{r}\right)\, dx\\
&\leq \-int_{\B_{r}} \V\left( \frac{|u- \ell_{r}|}{r} + |D\ell_{r}|\right)\, dx\\
&\le c \-int_{\B_{r}} \V_{|D\ell_{r}|} \left( \frac{|u- \ell_{r}|}{r}\right) +c \V(|D\ell_{r}|)\, dx\\
&= c[ \Phi(r) + \V(|D\ell_{r}|)]\leq 2c \V(|D\ell_{r}|). 
\end{align*}
Hence, using $\V^{*}\left( \frac{\V(t)}{t}\right) \sim \V(t)$ and \eqref{t3} we can see 
\begin{align*}
I\!I_{1} \le c \frac{1}{\V''(|D\ell_{r}|)} {\rm W}(r)^{\bar{\beta}} (\V^{*})^{-1} (\V(|D\ell_{r}|))\|D\zeta \|_{\infty}=c {\rm W}(r)^{\bar{\beta}} |D\ell_{r}|\|D\zeta \|_{\infty}, 
\end{align*}
where $\bar{\beta}:= \left(1-\frac{1}{\gamma}\right) \left(1-\frac{1}{p_{0}}\right)$. Exploiting $(H_{5})$ and proceeding similarly to $I\!I_{1}$ we obtain 
\begin{align*}
I\!I_{2} &\leq \frac{1}{\V''(|D\ell_{r}|)}\-int_{\B_{\frac{r}{4}}} \omega (|u-\ell_{r}(0)|)\V'(|Du|)\, dx \|D\zeta\|_{\infty} \\
&\leq \frac{1}{\V''(|D\ell_{r}|)} (\V^{*})^{-1}\circ \V^{*} \left( \-int_{\B_{\frac{r}{4}}} \omega (|u-\ell_{r}(0)|)\V'(|Du|)\, dx\right)  \|D\zeta\|_{\infty} \\
&\leq \frac{1}{\V''(|D\ell_{r}|)} (\V^{*})^{-1} \left( \-int_{\B_{\frac{r}{4}}} \V^{*} \left( \omega (|u-\ell_{r}(0)|)\V'(|Du|)\right)\, dx \right) \|D\zeta\|_{\infty} \\
&\leq \frac{1}{\V''(|D\ell_{r}|)} (\V^{*})^{-1} \left[\left(\-int_{\B_{\frac{r}{4}}} \omega (|u-\ell_{r}(0)|)^{\frac{\gamma}{\gamma-1}}\, dx \right)^{1-\frac{1}{\gamma}} \left( \-int_{\B_{\frac{r}{4}}} \V(|Du|)^{\gamma}\, dx \right)^{\frac{1}{\gamma}} \right]\|D\zeta \|_{\infty}\\
&\leq \frac{1}{\V''(|D\ell_{r}|)}\left[ \omega \left( \-int_{\B_{\frac{r}{4}}} |u-\ell_{r}(0)| \, dx \right) \right]^{\bar{\beta}} (\V^{*})^{-1} (\V(|D\ell_{r}|))\|D\zeta \|_{\infty}\\
&\leq \omega \left( \-int_{\B_{\frac{r}{4}}} |u-\ell_{r}(0)| \, dx \right)^{\bar{\beta}} |D\ell_{r}|\|D\zeta \|_{\infty}. 
\end{align*}
Now we define 
\begin{align*}
\W= \B_{\frac{r}{4}} \cap \{|Du- D\ell_{r}|\geq |D\ell_{r}|\} \quad \mbox{ and } \quad \Z= \B_{\frac{r}{4}} \cap \{|Du- D\ell_{r}|< |D\ell_{r}|\}
\end{align*}
Then, from $(H_{7})$, \eqref{t6} and Theorem \ref{Caccioppoli} we see 
\begin{align*}
I\!I_{3} &\leq \frac{L}{\V''(|D\ell_{r}|)} \-int_{\B_{\frac{r}{4}}} \V'(|Du|) |\zeta|\, dx \\
&\le c \frac{r}{\V''(|D\ell_{r}|)} \-int_{\B_{\frac{r}{4}}} \V'(|Du|)\, dx\, \|D\zeta\|_{\infty}\\
&\leq c\frac{r}{\V''(|D\ell_{r}|)} \-int_{\B_{\frac{r}{4}}} \chi_{\W} \V'(|Du-D\ell_{r}|+ |D\ell_{r}|) \, dx\, \|D\zeta\|_{\infty}\\ 
&\qquad + c\frac{r}{\V''(|D\ell_{r}|)} \-int_{\B_{\frac{r}{4}}} \chi_{\Z} \V'(|Du-D\ell_{r}|+ |D\ell_{r}|) \, dx\, \|D\zeta\|_{\infty}\\  
&\le c \frac{r}{\V''(|D\ell_{r}|)} \-int_{\B_{\frac{r}{4}}} \V'_{|D\ell_{r}|}(|Du-D\ell_{r}|) \frac{|Du-D\ell_{r}|}{|D\ell_{r}|}  \, dx \,\|D\zeta\|_{\infty}\\ 
&\qquad +c \frac{r}{\V''(|D\ell_{r}|)} \-int_{\B_{\frac{r}{4}}} \V'(|D\ell_{r}|)\, dx \,\|D\zeta\|_{\infty}\\ 
&\le c\frac{|D\ell_{r}|}{\V(|D\ell_{r}|)}\-int_{\B_{\frac{r}{4}}} \V_{|D\ell_{r}|}(|Du-D\ell_{r}|)  \, dx\, \|D\zeta\|_{\infty} + r|D\ell_{r}| \|D\zeta\|_{\infty}\\ 
&\le c \left\{ |D\ell_{r}| \left[ \frac{\Phi(r)}{\V(|D\ell_{r}|)} + \h(r)\right] + r|D\ell_{r}|\right\}\|D\zeta\|_{\infty}. 
\end{align*}
In conclusion we get
\begin{align*}
&\left| \-int_{\B_{\frac{r}{4}}} \A (Du-D\ell_{r}, D\zeta)\,dx \right| \\
&\leq c \|D\zeta\|_{\infty} |D\ell_{r}| \left\{ \left[ \frac{\Phi(r)}{\V(|D\ell_{r}|)} + \h(r)\right]^{\frac{\beta_{0}+1}{2}} + \left[ \frac{\Phi(r)}{\V(|D\ell_{r}|)} + \h(r)\right]+ {\rm W}(r)^{\bar{\beta}} \right. \\
&\qquad \qquad \qquad \qquad \qquad \left.+ \omega\left( \int_{\B_{r}} |u-\ell_{r}(0)|\, dx \right)^{\bar{\beta}}+r \right\}\\
&\le c_0\|D\zeta\|_{\infty} |D\ell_{r}| \left\{\frac{\Phi(r)}{\V(|D\ell_{r}|)} + \left[ \frac{\Phi(r)}{\V(|D\ell_{r}|)}\right]^{\frac{\beta_{0}+1}{2}} + \h(r)^{\beta_{1}} \right\}
\end{align*}
where $\beta_{1}= \min\{\bar{\beta}, \frac{\beta_{0}+1}{2}\}$. 
\end{proof}

\noindent The following lemma ensures that any solution to  \eqref{P} is approximately $\V$-harmonic. This will be the starting point for the application of the $\V$-harmonic approximation Theorem \ref{phiappr}.

\smallskip

\noindent From the assumption \eqref{nearzero} we have: for any $\delta>0$ there exists $\eta(\delta)>0$ such that if $|\xi|< \eta(\delta)$ then 
\begin{align}\label{DC}
\left| a(x, u, Du)- \frac{\xi}{|\xi|} \V'(|\xi|)\right| \leq \delta \V'(|\xi|) \quad \forall (x, u) \in \Omega \times \R^{N}. 
\end{align}
We define the following excess functional:
\begin{align*}
\Psi_{1}(x_{0}, r)= \-int_{\B_{r}(x_{0})} \V\left(\frac{|u-(u)_{x_{0}, r}|}{r}\right)\, dx. 
\end{align*}

\begin{lem}\label{lem1DC}
For any $\delta>0$ and for $\eta(\delta)>0$ given by \eqref{DC}, the inequality 
\begin{align*}
&\left|\-int_{\B_{\frac{r}{4}}(x_{0})} \left( \V'(|Du|)\frac{Du}{|Du|}, D\zeta \right) \, dx\right|\\
& \leq c_{1} \left[ \delta + \frac{1}{\eta(\delta)} \V^{-1} \left(\Psi_{1}\left(x_{0}, \frac{r}{2}\right)\right)+ {\rm W}(r)^{\frac{\gamma-1}{\gamma}} + \omega \left(\-int_{\B_{r}(x_{0})} |u-(u)_{x_{0}, r}|\, dx \right)^{\frac{\gamma-1}{\gamma}}+r\right]\times\\
&\qquad \times \left[\-int_{\B_{\frac{r}{2}}(x_{0})} \V(|Du|)\, dx + \V(\|D\zeta\|_{\infty})\right]. 
\end{align*}
holds, for every $\zeta \in \C^{\infty}_{c}(\B_{\frac{r}{4}}(x_{0}), \R^{N})$ and $c_1=c_1(n, N, \Lambda, \nu,p_0,p_1)>0$. 
\end{lem}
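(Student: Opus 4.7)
I would test the weak formulation \eqref{ws} against $\zeta\in\C^\infty_c(\B_{r/4}(x_0),\R^N)$; adding and subtracting $\V'(|Du|)\tfrac{Du}{|Du|}$ inside the first integrand yields
\begin{equation*}
\-int_{\B_{r/4}}\Bigl(\V'(|Du|)\tfrac{Du}{|Du|},D\zeta\Bigr)dx=\-int_{\B_{r/4}}\Bigl(\V'(|Du|)\tfrac{Du}{|Du|}-a(x,u,Du),D\zeta\Bigr)dx+\-int_{\B_{r/4}}b(x,u,Du)\cdot\zeta\,dx.
\end{equation*}
The $b$-term is disposed of immediately: by $(H_7)$, the pointwise estimate $\|\zeta\|_\infty\le c\,r\,\|D\zeta\|_\infty$ valid for $\C^\infty_c$ functions on a ball of radius $r/4$, and Young's inequality combined with $\V^*(\V'(t))\sim\V(t)$, it is bounded by $c\,r\bigl[\-int_{\B_{r/2}}\V(|Du|)\,dx+\V(\|D\zeta\|_\infty)\bigr]$, producing the $r$-contribution.

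\smallskip

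\noindent For the main integrand, set $u_0:=(u)_{x_0,r}$ and decompose
\begin{equation*}
\V'(|Du|)\tfrac{Du}{|Du|}-a(x,u,Du)=P+Q+R,
\end{equation*}
with $P=\V'(|Du|)\tfrac{Du}{|Du|}-\bigl(a(\cdot,u_0,Du(x))\bigr)_{x_0,r}$, $Q=\bigl(a(\cdot,u_0,Du(x))\bigr)_{x_0,r}-a(x,u_0,Du(x))$, and $R=a(x,u_0,Du)-a(x,u,Du)$. The VMO assumption $(H_6)$ yields $|Q|\le\Lambda w_{x_0}(x,r)\V'(|Du|)$, while $(H_5)$ gives $|R|\le\Lambda\omega(|u-u_0|)\V'(|Du|)$. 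Applying Young to reduce $\V'(|Du|)|D\zeta|$ to $\V(|Du|)+\V(\|D\zeta\|_\infty)$, then H\"older with the pair $(\gamma,\gamma/(\gamma-1))$, the higher-integrability Theorem \ref{higherintegrability}, and the pointwise bound $w\le 2\Lambda$, produces the $W(r)^{(\gamma-1)/\gamma}$ factor for $Q$; for $R$, the same computation supplemented by $\omega^{\gamma/(\gamma-1)}\le\omega$ (since $\omega\in[0,1]$) and Jensen's inequality for the concave non-decreasing $\omega$ delivers the $\omega\bigl(\-int_{\B_r(x_0)}|u-u_0|\,dx\bigr)^{(\gamma-1)/\gamma}$ factor. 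The term $P$ is handled by the splitting $\B_{r/4}=E_1\cup E_2$ with $E_1:=\{|Du|<\eta(\delta)\}$: on $E_1$, applying \eqref{DC} pointwise in $y\in\B_r(x_0)$ at fixed $\xi=Du(x)$ and averaging yields $|P|\le\delta\V'(|Du|)$, which contributes $c\delta\bigl[\-int\V(|Du|)+\V(\|D\zeta\|_\infty)\bigr]$ via Young; on $E_2$ only the triangle bound $|P|\le(1+\Lambda)\V'(|Du|)$ is available, but the condition $|Du|\ge\eta(\delta)$ together with $\V'(t)\le p_1\V(t)/t$ from \eqref{CO1} gives
\begin{equation*}
\-int_{\B_{r/4}}\chi_{E_2}|P||D\zeta|\,dx\le\frac{c\,\|D\zeta\|_\infty}{\eta(\delta)}\-int_{\B_{r/4}}\V(|Du|)\,dx.
\end{equation*}

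\smallskip

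\noindent The main obstacle is to convert this $\|D\zeta\|_\infty$-linear $E_2$-bound into the form $\frac{\V^{-1}(\Psi_1)}{\eta(\delta)}\bigl[\-int\V(|Du|)+\V(\|D\zeta\|_\infty)\bigr]$ required by the statement. The key elementary inequality, setting $M:=\V^{-1}(\Psi_1(x_0,r/2))$, is
\begin{equation*}
\|D\zeta\|_\infty\cdot A\le c\,M\bigl(A+\V(\|D\zeta\|_\infty)\bigr)\quad\text{whenever}\quad A\le c\,\V(M),
\end{equation*}
which is proved by case analysis according to $\|D\zeta\|_\infty\le M$ or $\|D\zeta\|_\infty>M$, exploiting that $t\mapsto\V(t)/t$ is non-decreasing up to constants by \eqref{CO1}. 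The required smallness $A=\-int_{\B_{r/4}}\V(|Du|)\le c\,\Psi_1(x_0,r/2)=c\,\V(M)$ is precisely Caccioppoli's inequality (Corollary \ref{cacczero}) applied on $\B_{r/4}$ with reference ball $\B_{r/2}$ and affine constant $(u)_{x_0,r/2}$. Collecting the four contributions, and enlarging $\-int_{\B_{r/4}}\V(|Du|)\le 2^n\-int_{\B_{r/2}}\V(|Du|)$ where necessary, yields the asserted inequality.
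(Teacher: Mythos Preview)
Your proof is correct and follows essentially the same decomposition as the paper: the same splitting into the $b$-term, the VMO piece $Q$, the $u$-continuity piece $R$, and the near-origin piece $P$ with its further split according to $|Du|\lessgtr\eta(\delta)$, each handled by the same tools (Young, H\"older against the higher-integrability exponent $\gamma$, Jensen for the concave $\omega$, Caccioppoli). The only cosmetic difference is in the $E_2$-step: where the paper passes through the duality relation $(\V^*)^{-1}(t)\,\V^{-1}(t)\sim t$ and then applies Young's inequality to the product $(\V^*)^{-1}(\Psi_1)\cdot\|D\zeta\|_\infty$, you instead use a direct case split on $\|D\zeta\|_\infty\lessgtr M=\V^{-1}(\Psi_1)$ together with the monotonicity of $t\mapsto\V(t)/t$; both arguments are elementary and yield the same bound.
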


\begin{proof}
Again we suppose $x_{0}=0$ and we let $\zeta \in \C^{\infty}_{c}(\B_{\frac{r}{4}}, \R^{N})$. Then 
\begin{align*}
\-int_{\B_{\frac{r}{4}}} \left( \V'(|Du|)\frac{Du}{|Du|}, D\zeta \right) \, dx &\leq\-int_{\B_{\frac{r}{4}}} \left( \V'(|Du|)\frac{Du}{|Du|}- \left( a(\cdot, \ell_{r}(0), Du) \right)_{r}, D\zeta \right) \, dx \\
&\quad + \-int_{\B_{\frac{r}{4}}} ( \left( a(\cdot, \ell_{r}(0), Du) \right)_{r}- a(x, \ell_{r}(0), Du), D\zeta ) \, dx \\
&\quad + \-int_{\B_{\frac{r}{4}}} ( a(x, \ell_{r}(0), Du) - a(x, u, Du), D\zeta ) \, dx \\
&\quad + \-int_{\B_{\frac{r}{4}}} b(x, u, Du) \zeta \, dx =I+I\!I+I\!I\!I+I\!V. 
\end{align*}
{\it Estimate of $I$.} First we define the sets 
\begin{align*}
\X= \B_{\frac{r}{4}}\cap \left\{ |Du|<\eta(\delta) \right\} \quad \mbox{ and } \quad \Y= \B_{\frac{r}{4}} \cap \left\{ |Du|\geq \eta(\delta)\right\}, 
\end{align*}
with $\eta(\delta)>0$ given by \eqref{DC}, so that $I$ can be splitted as 
\begin{align*}
|I|&\leq \-int_{\B_{\frac{r}{4}}} \left[ \-int_{\B_{r}} \left| \V'(|Du|)\frac{Du}{|Du|}- a(y, \ell_{r}(0), Du) \right|\, dy \, \|D\zeta \|_{\infty} \right]\, dx \\
&=\-int_{\B_{r}} \-int_{\B_{\frac{r}{4}}} \chi_{\X} \left| \V'(|Du|)\frac{Du}{|Du|}- a(y, \ell_{r}(0), Du) \right| \, \|D\zeta \|_{\infty} dxdy \\
&\quad +\-int_{\B_{r}} \-int_{\B_{\frac{r}{4}}} \chi_{\Y} \left| \V'(|Du|)\frac{Du}{|Du|}- a(y, \ell_{r}(0), Du) \right| \, \|D\zeta \|_{\infty} dxdy \\
&=I_{1}+ I_{2}. 
\end{align*}
Using \eqref{DC}, Young's inequality and \eqref{t3}, we get
%the equivalence $\V^{*}(\V'(t))\sim \V(t)$, we get 
\begin{align*}
I_{1}&\leq \delta \-int_{\B_{\frac{r}{4}}} \chi_{\X} \V'(|Du|)\, \|D\zeta \|_{\infty}  dx\\
&\leq \delta \left[ \-int_{\B_{\frac{r}{4}}} \V^{*} \left( \V'(|Du|) \right) + \V (\|D\zeta\|_{\infty})\, dx \right]\\
&\leq c\,\delta \left[ \-int_{\B_{\frac{r}{4}}} \V(|Du|) + \V (\|D\zeta\|_{\infty})\, dx \right]. 
\end{align*}
Moreover, from $(H_1)$, \eqref{t3}, 
%the equivalence $\V(t)\sim\V'(t)t$, 
Theorem \ref{cacczero}, the fact that $(\V^{*})^{-1}(t) \V^{-1}(t)\sim t$, Young's inequality and finally Poincar\`e's inequality, we can see 
\begin{align*}
I_{2}&\leq c \-int_{\B_{\frac{r}{4}}} \chi_{\Y} \V'(|Du|)\|D\zeta \|_{\infty}\, dx \leq \frac{c}{\eta(\delta)} \-int_{\B_{\frac{r}{4}}} \V'(|Du|) |Du|\|D\zeta \|_{\infty}\, dx \\ 
&\leq \frac{c}{\eta(\delta)} \-int_{\B_{\frac{r}{4}}} \V(|Du|) \|D\zeta \|_{\infty}\, dx\leq \frac{c}{\eta(\delta)}\-int_{\B_{\frac{r}{2}}} \V\left(\frac{|u-(u)_{\frac{r}{2}}|}{\frac{r}{2}}\right) \|D\zeta\|_{\infty} \, dx \\
&\leq \frac{c}{\eta(\delta)} \Psi_{1} \left(\frac{r}{2} \right) \|D\zeta\|_{\infty}\leq \frac{c}{\eta(\delta)} (\V^{*})^{-1}\left(\Psi_{1} \left(\frac{r}{2} \right)\right) \V^{-1}\left(\Psi_{1} \left(\frac{r}{2} \right)\right)\|D\zeta\|_{\infty}\\
&\leq \frac{c}{\eta(\delta)}\V^{-1}\left(\Psi_{1} \left(\frac{r}{2} \right)\right) \left[ \V^{*} \circ (\V^{*})^{-1} \left( \Psi_{1}\left(\frac{r}{2}\right)\right) + \V(\|D\zeta\|_{\infty})\right]\\
&\leq \frac{c}{\eta(\delta)}\V^{-1}\left(\Psi_{1} \left(\frac{r}{2} \right)\right) \left[ \Psi_{1}\left(\frac{r}{2}\right)+ \V(\|D\zeta\|_{\infty})\right]\\
&\leq \frac{c}{\eta(\delta)}\V^{-1}\left(\Psi_{1} \left(\frac{r}{2} \right)\right) \left[\-int_{\B_{\frac{r}{2}}} \V(|Du|)\, dx + \V(\|D\zeta\|_{\infty})\right]. 
\end{align*}
{\it Estimate of $I\!I$.} Using $(H_{6})$, Young's inequality, Theorem \ref{higherintegrability} and recalling that $\w_{0}(x, r)\leq 2\Lambda$ we can infer that
\begin{align*}
I\!I &\leq \Lambda \-int_{\B_{\frac{r}{4}}} \w_{0}(x, r) \V'(|Du|) |D\zeta|\, dx \\
&\leq \Lambda \-int_{\B_{\frac{r}{4}}} \w_{0}(x, r) \V^{*}(\V'(|Du|))\, dx + \Lambda \-int_{\B_{\frac{r}{4}}} \w_{0}(x, r) \V(\|D\zeta\|_{\infty})\, dx \\
&\leq c \-int_{\B_{\frac{r}{4}}} \w_{0}(x, r) \V(|Du|)\, dx + \Lambda {\rm W}(r) \V(\|D\zeta\|_{\infty}) \\
&\leq c \left( \-int_{\B_{\frac{r}{4}}} \w_{0}(x, r)^{\frac{\gamma}{\gamma-1}} \, dx \right)^{1-\frac{1}{\gamma}} \left( \-int_{\B_{\frac{r}{4}}} \V(|Du|)^{\gamma}\, dx \right)^{\frac{1}{\gamma}}+ \Lambda {\rm W}(r) \V(\|D\zeta\|_{\infty}) \\
&\leq c \left[{\rm W}(r)\right]^{\frac{\gamma-1}{\gamma}} \-int_{\B_{\frac{r}{2}}} \V(|Du|)\, dx + \Lambda {\rm W}(r) \V(\|D\zeta\|_{\infty}). 
\end{align*}
{\it Estimate of $I\!I\!I$.} In a similar way, exploiting $(H_{5})$, Young's inequality, Theorem \ref{higherintegrability} we obtain
\begin{align*}
|I\!I\!I|&\leq \Lambda \-int_{\B_{\frac{r}{4}}} \omega\left( |u-\ell_{r}(0)|\right)\V'(|Du|) \|D\zeta\|_{\infty} \, dx \\
&\leq c \-int_{\B_{\frac{r}{4}}} \omega\left( |u-\ell_{r}(0)|\right)\V(|Du|)\, dx + \Lambda \-int_{\B_{\frac{r}{4}}} \omega\left( |u-\ell_{r}(0)|\right)\V(\|D\zeta\|_{\infty}) \, dx \\
&\leq c \left( \-int_{\B_{\frac{r}{4}}} \omega\left( |u-\ell_{r}(0)|\right)^{\frac{\gamma}{\gamma-1}}\, dx \right)^{1-\frac{1}{\gamma}}\-int_{\B_{\frac{r}{2}}} \V(|Du|)\, dx + \Lambda \omega \left( \-int_{\B_{\frac{r}{4}}} |u-\ell_{r}(0)|\, dx \right)\V(\|D\zeta\|_{\infty})\\
&\leq c\,\omega \left( \-int_{\B_{\frac{r}{4}}} |u-\ell_{r}(0)|\, dx \right)^{\frac{\gamma-1}{\gamma}} \-int_{\B_{\frac{r}{2}}} \V(|Du|)\, dx+ \Lambda \omega \left( \-int_{\B_{\frac{r}{4}}} |u-\ell_{r}(0)|\, dx \right)\V(\|D\zeta\|_{\infty}). 
\end{align*}
{\it Estimate of $I\!V$.} Using $(H_{7})$ and Young's inequality, we get
%we have  Theorem \ref{cacczero}, the fact that $(\V^{*})^{-1}(t) \V^{-1}(t)\sim t$, Young's inequality and Poincar\`e inequality, we get
\begin{align*}
|I\!V|\leq L \-int_{\B_{\frac{r}{4}}} \V'(|Du|) r \|D\zeta\|_{\infty} \, dx \leq c\,r \left[ \-int_{\B_{\frac{r}{2}}} \V(|Du|)\, dx + \V(\|D\zeta\|_{\infty})\right].
%&\leq c \-int_{\B_{\frac{r}{2}}} \V\left(\frac{|u-(u)_{r}|}{\frac{r}{2}}\right) r \|D\zeta\|_{\infty} \, dx \\
%&\leq cr \Psi_{1} \left(\frac{r}{2} \right) \|D\zeta\|_{\infty}\\
%&\leq cr (\V^{*})^{-1}\left(\Psi_{1} \left(\frac{r}{2} \right)\right) \V^{-1}\left(\Psi_{1} \left(\frac{r}{2} \right)\right)\|D\zeta\|_{\infty}\\
%&\leq cr \V^{-1}\left(\Psi_{1} \left(\frac{r}{2} \right)\right) \left[ (\V^{*} \circ (\V^{*})^{-1}) \left( \Psi_{1}\left(\frac{r}{2}\right)\right) + \V(\|D\zeta\|_{\infty})\right]\\
%&\leq cr \V^{-1}\left(\Psi_{1} \left(\frac{r}{2} \right)\right) \left[ \Psi_{1}\left(\frac{r}{2}\right)+ \V(\|D\zeta\|_{\infty})\right]\\
\end{align*}
Combining the previous estimates we obtain the desired result. 
%\begin{align*}
%&\left|\-int_{\B_{\frac{r}{4}}} ( \V'(|Du|)\frac{Du}{|Du|}, D\zeta ) \, dx\right|\\
%& \leq c_{1} \left[ \delta + \frac{1}{\eta(\delta)} \V^{-1} \left(\Psi_{1}\left(\frac{r}{2}\right)\right)+ {\rm W}(r)^{\frac{\gamma-1}{\gamma}} + \omega \left(\-int_{\B_{r}} |u-(u)_{r}|\, dx \right)^{\frac{\gamma-1}{\gamma}}+r\right]\times\\
%&\qquad \times \left[\-int_{\B_{\frac{r}{2}}} \V(|Du|)\, dx + \V(\|D\zeta\|_{\infty})\right]. 
%\end{align*}
\end{proof}

\section{Decay Estimates}

\noindent Now we are in the position to establish the excess improvements. We start with the non-degenerate regime. The strategy of the proof is to approximate our solution by $\mathcal{A}$-harmonic functions.

\begin{lem}\label{excessdecay}
Assume that for $\tau\in (0, \frac{1}{8}]$ and for some ball $\B_{r}(x_{0})\subset \Omega$ with $r\leq \rho_{0}$ and $|D\ell_{x_{0}, r}|\neq 0$ the following assumptions are satisfied
\begin{align}\label{eq23}
\frac{\Phi(x_{0}, r)}{\V(|D\ell_{x_{0}, r}|)}\leq \delta_{1} \quad \mbox{ and } \quad \h(r)^{\beta_{1}}\leq \delta_{2}, 
\end{align}
where $\delta_{1}, \delta_{2}\in (0, 1)$ are such that %$\delta_{1}+\delta_{2}\leq 1$ 
\begin{align*}
&\delta_{1}= \min \left\{ \left( \frac{\tau^{n+1}}{8(n+2)}\right)^{p_{1}}, \left( \frac{\delta_{0}}{4}\right)^{\frac{2}{\beta_{0}}}\right\}, \\
&\delta_{2} = \left(\frac{\delta_{0}}{2}\right)^{2}, 
\end{align*}
with $\delta_{0}$ given in Theorem \ref{approxdlsv} and $\beta_{1}$  in Lemma \ref{lem3.9Bog}. 
Then, 
\begin{align*}
\Phi(x_{0}, \tau r)\leq c_2 \tau^{2} \Phi^{*}(x_{0}, r),
\end{align*}
where $c_2$ depends on $\nu, \Lambda, L, p_0, p_1$.
%where 
%\begin{align*}
%\Phi^{*}(x_{0}, r)= \Phi(x_{0}, r) + \h(r)^{\beta_{1}} \V(|D\ell_{x_{0}, r}|). 
%\end{align*}
\end{lem}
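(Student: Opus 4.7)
Assume $x_{0}=0$ for notational simplicity and set $v:=u-\ell_{0,r}$, $Q:=D\ell_{0,r}$, and $\Upsilon^{2}:=\Phi^{*}(r)/\V(|Q|)$. The plan is to run the classical $\A$-harmonic comparison scheme, adapted to the Orlicz setting, in four stages.

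\textbf{Stage 1 (approximate $\A$-harmonicity).} Feed the assumptions \eqref{eq23} into Lemma \ref{lem3.9Bog}. Because $\Phi/\V(|Q|)\le\delta_{1}\le 1$ and $(\beta_{0}+1)/2\le 1$, the linear term is dominated by the power-$(\beta_{0}+1)/2$ term; using $\delta_{1}\le(\delta_{0}/4)^{2/\beta_{0}}$ the factor $(\Phi/\V(|Q|))^{\beta_{0}/2}$ is bounded by $\delta_{0}/4$, and using $\delta_{2}\le(\delta_{0}/2)^{2}$ the factor $\h(r)^{\beta_{1}/2}$ is bounded by $\delta_{0}/2$. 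Each of the three terms in the RHS of Lemma \ref{lem3.9Bog} can therefore be written as a fraction of $\delta_{0}\Upsilon$, which yields
\[
\Bigl|\-int_{\B_{r/4}}\A(Dv,D\zeta)\,dx\Bigr|\le \delta_{0}\,|Q|\Upsilon\,\|D\zeta\|_{\infty}
\]
for every $\zeta\in\C^{\infty}_{c}(\B_{r/4},\R^{N})$; that is, $v$ is approximately $\A$-harmonic in the sense of Theorem \ref{approxdlsv} with parameter $|Q|\Upsilon$.

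\textbf{Stage 2 ($\A$-harmonic approximation).} Fix an exponent $s\in(1,\min\{n/(n-1),p_{1}/(p_{1}-p_{0}+1)\})$ and, given a small $\e>0$ to be chosen in Stage 4, apply Corollary \ref{coro} on $\B_{r/4}$ to obtain an $\A$-harmonic $h\in W^{1,\V}(\B_{r/4},\R^{N})$ with $h=v$ on $\partial\B_{r/4}$ and
\[
\Bigl(\-int_{\B_{r/4}}\V(4|v-h|/r)\,dx\Bigr)^{1/s}\le \e\Bigl\{\bigl(\-int_{\B_{r/4}}\V(|Dv|)\,dx\bigr)^{1/s}+\V^{1/s}(|Q|\Upsilon)\Bigr\}.
\]
Control of the first term on the RHS comes from the Caccioppoli inequality (Theorem \ref{Caccioppoli}): combined with the assumption $\Phi(r)\le\V(|Q|)$ and $\h(r)\le 1$, together with $\V(|Dv|)\le c(\V_{|Q|}(|Dv|)+\V(|Q|))$ coming from \eqref{t6}, it gives $\-int_{\B_{r/4}}\V(|Dv|)\,dx\le c\V(|Q|)$; the second term is bounded using $\V(|Q|\Upsilon)\le\Upsilon^{p_{0}}\V(|Q|)$ since $\Upsilon\le 1$ in our regime.

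\textbf{Stage 3 (Campanato decay for $h$).} Because $\A$ has constant coefficients and is strongly elliptic, $h$ is smooth and enjoys the classical second-order Campanato estimate
\[
\sup_{\B_{r/8}}\bigl(|Dh|^{2}+r^{2}|D^{2}h|^{2}\bigr)\le c\-int_{\B_{r/4}}|Dh|^{2}\,dx,
\]
together with the energy bound $\-int_{\B_{r/4}}|Dh|^{2}\,dx\le c\-int_{\B_{r/4}}|Dv|^{2}\,dx$, which via $\V_{|Q|}(t)\sim\V''(|Q|)t^{2}$ for $t\le|Q|$ translates into the quantitative bound $(r|D^{2}h|)^{2}+|Dh|^{2}\le c\V(|Q|)\Upsilon^{2}/\V''(|Q|)$ on $\B_{r/8}$. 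Taylor expanding at the origin then gives, for every $x\in\B_{\tau r}$ with $\tau\le 1/8$,
\[
|h(x)-h(0)-Dh(0)\,x|\le c(\tau r)^{2}\sup_{\B_{r/8}}|D^{2}h|.
\]

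\textbf{Stage 4 (conclusion by almost-minimality).} Introduce the affine competitor $\tilde\ell(x):=\ell_{0,r}(x)+h(0)+Dh(0)\,x$. Lemma \ref{minaffine}, applied at radius $\tau r$ with the $N$-function $\V_{|Q|}$, together with the shift Lemma \ref{shift} to pass from $\V_{|Q|}$ to $\V_{|D\ell_{0,\tau r}|}$ (using that $|D\ell_{0,\tau r}-Q|$ is controlled by $\-int_{\B_{\tau r}}|v|/(\tau r)$ via \eqref{minimizza}), yields
\[
\Phi(0,\tau r)\le c\-int_{\B_{\tau r}}\V_{|Q|}\bigl(|v-h|/(\tau r)\bigr)dx+c\-int_{\B_{\tau r}}\V_{|Q|}\bigl(|h(x)-h(0)-Dh(0)x|/(\tau r)\bigr)dx.
\]
The second integrand, by Stage 3 together with $\V_{|Q|}(\tau t)\lesssim\tau^{2}\V_{|Q|}(t)$ for $t\le|Q|$, is of order $\tau^{2}\V(|Q|)\Upsilon^{2}=\tau^{2}\Phi^{*}(r)$. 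For the first, $\V_{|Q|}\lesssim\V$ in the relevant range, so the Stage 2 bound combined with the extra factor $\tau^{-n-p_{1}}$ from passing from $\B_{r/4}$ to $\B_{\tau r}$ is absorbed by choosing $\e=\e(\tau)$ small enough that the contribution is at most $\tau^{2}\Phi^{*}(r)$; note that this choice of $\e$ fixes $\delta_{0}$ in Stage 1, which is consistent with the structural constants $\delta_{1},\delta_{2}$ prescribed in the statement. This proves $\Phi(0,\tau r)\le c_{2}\tau^{2}\Phi^{*}(r)$.

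\textbf{Anticipated obstacle.} The main delicate point is Stage 4: the $\A$-harmonic approximation naturally delivers bounds in terms of the unshifted $\V$, while the excess $\Phi$ is defined via the shifted $\V_{|D\ell_{0,\tau r}|}$. Bridging this via Lemma \ref{shift} and the change-of-centre inequality \eqref{minimizza} must be done without losing the $\tau^{2}$ decay, and in particular one must verify that the small-argument (quadratic) regime of $\V_{|Q|}$ is the relevant one both for the Taylor remainder of $h$ and for the comparison between $|D\ell_{0,\tau r}|$ and $|Q|$, which is precisely where the smallness hypothesis $\Phi(r)\le\delta_{1}\V(|Q|)$ is used in an essential way.
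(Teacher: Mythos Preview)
Your overall four-stage scheme matches the paper's, but Stage~2 contains a genuine gap that propagates into Stage~4 and makes the argument fail as written. The issue is that you apply Corollary~\ref{coro} with the \emph{unshifted} $N$-function $\V$ to $v=u-\ell_{r}$, which yields
\[
\-int_{\B_{r/4}}\V\!\left(\frac{|v-h|}{r/4}\right)dx\le c\,\e^{s}\V(|Q|),
\]
since, as you correctly note, one can only say $\-int_{\B_{r/4}}\V(|Dv|)\,dx\le c\,\V(|Q|)$ (without any $\Upsilon^{2}$ factor; this bound is sharp in the subquadratic regime). In Stage~4 you then need the first term to be at most $c\,\tau^{2}\Phi^{*}(r)=c\,\tau^{2}\Upsilon^{2}\V(|Q|)$, which would force $\e^{s}\le c\,\tau^{n+p_{1}+2}\Upsilon^{2}$. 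Since $\Upsilon$ is not a structural constant but can be arbitrarily small, no choice $\e=\e(\tau)$ can achieve this. Moreover, your claim ``$\V_{|Q|}\lesssim\V$ in the relevant range'' is false for arguments $t\ll|Q|$ when $p_{1}>2$ (indeed $\V_{|Q|}(t)\sim\V''(|Q|)t^{2}$ while $\V(t)\lesssim t^{p_{1}}\V(1)$), so you cannot pass from the $\V$-bound to the $\V_{|Q|}$-bound needed for the excess.

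The remedy, and what the paper actually does, is to normalize $w=v/(c_{0}|Q|)$ and apply Corollary~\ref{coro} with the \emph{shifted} $N$-function $\psi(t):=\V_{|Q|}(|Q|t)$ (legitimate by Remark~\ref{phiap0p1}). The approximation parameter is then the dimensionless $\Upsilon$, and since $\psi(|Dw|)\sim\V_{|Q|}(|Dv|)$, the Caccioppoli inequality gives $\-int_{\B_{r/4}}\psi(|Dw|)\,dx\le c\,\Upsilon^{2}\V(|Q|)$, while $\psi(\Upsilon)=\V_{|Q|}(|Q|\Upsilon)\le c\,\Upsilon^{2}\V(|Q|)$ since $\Upsilon\le 1$. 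Thus the approximation delivers
\[
\-int_{\B_{r/4}}\V_{|Q|}\!\left(\frac{|v-c_{0}|Q|h|}{r/4}\right)dx\le c\,\e^{s}\,\Upsilon^{2}\V(|Q|),
\]
directly in the shifted $N$-function and with the $\Upsilon^{2}$ factor in place, so that choosing $\e^{s}\le\tau^{n+p_{1}+2}$ suffices in Stage~4. Your Stages~1 and~3 are essentially correct (the paper replaces your Lemma~\ref{shift} argument for changing the shift parameter by a direct proof that $|D\ell_{\tau r}|\sim|Q|$, but either route works there).
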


\begin{proof}
Assume that $x_{0}=0$. 
%and define
%\begin{align*}
%\Upsilon= \sqrt{\frac{\Phi^{*}(r)}{\V(|D\ell_{r}|)}}. 
%\end{align*}
Using the definition of $\Phi^{*}$ and assumptions \eqref{eq23} we get
\begin{align}\label{eq24*}
\Upsilon^{2}=\frac{\Phi^{*}(r)}{\V(|D\ell_{r}|)} = \frac{\Phi(r)}{\V(|D\ell_{r}|)} + \h(r)^{\beta_{1}}\leq \delta_{1}+ \delta_{2}\leq1. 
\end{align}
Setting 
\begin{align*}
w= \frac{u- \ell_{r}}{c_0|D\ell_{r}|},
\end{align*}
where $c_0$ is the constant of Lemma \ref{lem3.9Bog}, from which we deduce 
\begin{align}\label{eq24}
\-int_{\B_{\frac{r}{4}}} \A(Dw, D\eta) \, dx  \leq \|D\eta\|_{\infty}\, \Upsilon\, \left[ \left(\frac{\Phi(r)}{\V(|D\ell_{r}|)}\right)^{\frac{1}{2}} + \left(\frac{\Phi(r)}{\V(|D\ell_{r}|)}\right)^{\frac{\beta_{0}}{2}}+ \h(r)^{\frac{\beta_{1}}{2}}\right], 
\end{align}
and exploiting \eqref{eq23}, together with the choice of $\delta_1$ and $\delta_2$, we obtain that $w$ is approximately $\A$-harmonic in the sense that
\begin{align*}
\-int_{\B_{\frac{r}{4}}} \A(Dw, D\eta)\, dx \leq \|D\eta\|_{\infty} \Upsilon \, \left[\delta_{1}^{\frac{1}{2}} + \delta_{1}^{\frac{\beta_{0}}{2}} +\delta_{2}^{\frac{1}{2}} \right] \leq \|D\eta\|_{\infty} \Upsilon \, \delta_{0}.
\end{align*}
Then Theorem \ref{approxdlsv} and Corollary \ref{coro} applied to the $N$-function $t\to \V_ {|D\ell_{r}|}(|D\ell_{r}| t)$ (see Remark \ref{phiap0p1}) give the existence of  an $\A$-harmonic function $h\in \C^{\infty}(\B_{\frac{r}{4}}, \R^{N})$ such that 
\begin{align*}
\-int_{\B_{\frac{r}{4}}} \V_{|D\ell_{r}|} \left(|D\ell_{r}| \left|\frac{w-h}{\frac{r}{4}}\right|\right) \, dx \leq c \e^{s} \left[ \-int_{\B_{\frac{r}{4}}} \V_{|D\ell_{r}|} (|D\ell_{r}| |Dw|) \, dx + \V_{|D\ell_{r}|}(|D\ell_{r}|\Upsilon) \right]. 
\end{align*}
Moreover, from Lemma 5 in \cite{DGK}, it follows that $h$ verifies
\begin{equation}\label{stimeh}
r \sup_{\B_{\frac{r}{8}}} |D^{2}h| + \sup_{\B_{\frac{r}{8}}} |Dh| \leq c \-int_{\B_{\frac{r}{4}}} |Dh|\, dx 
\end{equation}
with $c= c(\A, n, N)$. 
Now we note that using $\varphi_{a}(sa)\leq c s^{2} \varphi_{a}(a)$ for all $s\in [0,1]$, and taking in mind \eqref{eq24*} we have 
\begin{align*}
\V_{|D\ell_{r}|}(|D\ell_{r}|\Upsilon) \leq c \Upsilon^{2} \V_{|D\ell_{r}|}(|D\ell_{r}|) = c\Upsilon^{2} \V(|D\ell_{r}|). 
\end{align*}
From Theorem \ref{Caccioppoli}, \eqref{t1} and the definition of $\Phi^{*}$ we have
\begin{equation}\label{eq29}
\begin{split}
&\-int_{\B_{\frac{r}{4}}} \V_{|D\ell_{r}|} (|D\ell_{r}| |Dw|) \, dx\\
&\quad =\-int_{\B_{\frac{r}{4}}} \V_{|D\ell_{r}|} \left( \frac{|Du-D\ell_{r}|}{c_0}\right)\, dx \leq c \-int_{\B_{\frac{r}{4}}} \V_{|D\ell_{r}|} \left(|Du-D\ell_{r}|\right)\, dx \\
&\quad  \leq c_{cacc} \left\{ \-int_{\B_{\frac{r}{2}}} \V_{|D\ell_{r}|} \left(\frac{|u-\ell_{r}|}{\frac{r}{2}}\right) \, dx + \V(|D\ell_{r}|) \left[ {\rm W}(r) + \omega \left( \-int_{\B_{\frac{r}{2}}} |u-(u)_{r}|\, dx\right)+ r\right] \right\}\\
&\quad \leq c \left\{ \-int_{\B_{r}} \V_{|D\ell_{r}|} \left(\frac{|u-\ell_{r}|}{r}\right) \, dx + \V(|D\ell_{r}|) \h(r)\right\} \\
&\quad \leq c \left[\Phi(r)+ \V(|D\ell_{r}|) \h(r)^{\beta_{1}}\right] \\
&\quad =c \frac{\Phi^{*}(r)}{\V(|D\ell_{r}|)} \V(|D\ell_{r}|)= c \Upsilon^{2} \V(|D\ell_{r}|).\\
\end{split}
\end{equation}
Hence
\begin{align}\label{eq25}
\-int_{\B_{\frac{r}{4}}} \V_{|D\ell_{r}|} \left(|D\ell_{r}| \left|\frac{w-h}{\frac{r}{4}}\right|\right) \, dx \leq c \e^{s} \Upsilon^{2} \V(|D\ell_{r}|). 
\end{align}

\noindent
%{\color{red}{Then 
%\begin{align}\label{eq26}
%\sup_{\B_{\tau r}} |h(x) - \ell^{(h)}(x)|\leq \sup_{\B_{\tau r}} |D^{2}h| \frac{\tau^{2}r^{2}}{2} \leq c \-int_{\B_{\frac{r}{4}}} |Dh|\, dx \tau^{2} r \leq C\Upsilon \tau^{2}r. 
%\end{align}}}
Then, applying Theorem 18 in \cite{DLSV} to the $N$-function $t\to \V_{|D\ell_{r}|}(|D\ell_{r}|t)$ and using (\ref{eq29}), we can infer 
\begin{align*}
\-int_{\B_{\frac{r}{4}}} \V_{|D\ell_{r}|}(|D\ell_{r}| |Dh|)\, dx \le c \-int_{\B_{\frac{r}{4}}} \V_{|D\ell_{r}|}(|Du-D\ell_{r}|)\, dx \le c \Upsilon^{2}  \V(|D\ell_{r}|). 
\end{align*}
Furthermore, we note that by \eqref{t6} for any $t\leq 1$ we have 
\begin{align*}
\frac{\V_{|D\ell_{r}|}(|D\ell_{r}|t)}{\V(|D\ell_{r}|)} \sim \frac{t^{2}}{(1+t)^{2}} \frac{\V(|D\ell_{r}|(1+t))}{\V(|D\ell_{r}|)} \geq \frac{t^{2}}{(1+t)^{2}} \geq \frac{t^{2}}{4}. 
\end{align*}
Hence, naming $\psi(t)= \V_{|D\ell_{r}|}(|D\ell_{r}|t)$ and using \eqref{eq24*} and the above inequalities we get
\begin{align*}
\-int_{\B_{\frac{r}{4}}} |Dh|\, dx &= \psi^{-1} \circ \psi \left( \-int_{\B_{\frac{r}{4}}} |Dh|\, dx \right) \leq \psi^{-1}\left( \-int_{\B_{\frac{r}{4}}} \psi(|Dh|)\, dx \right)\\
&= \psi^{-1}\left( \-int_{\B_{\frac{r}{4}}} \V_{|D\ell_{r}|}(|D\ell_{r}| |Dh|) \, dx \right) \le c\, \psi^{-1} (\Upsilon^{2}  \V(|D\ell_{r}|)) \\
&\le c\, \psi^{-1}(\V_{|D\ell_{r}|} (|D\ell_{r}|\Upsilon)) \le c\, \psi^{-1}(\psi(\Upsilon))=c \Upsilon. 
\end{align*}
Hence, fixing $\tau \in \left(0, \frac{1}{8}\right]$ and defining $\ell^{(h)}(x)= h(0)+ Dh(0) x$, by (\ref{stimeh}) we have 
\begin{align*}%\label{eq26}
\sup_{\B_{\tau r}} |h(x) - \ell^{(h)}(x)|
%\leq \sup_{\B_{\tau r}} |D^{2}h| \frac{\tau^{2}r^{2}}{2} 
\leq c \, \tau^{2} r\, \-int_{\B_{\frac{r}{4}}} |Dh|\, dx \leq c \tau^{2}r \Upsilon, 
\end{align*}
from which, recalling that $\varphi_{a}(sa)\leq c s^{2} \varphi_{a}(a)$ for all $s\in [0,1]$, we see  
\begin{align*}
\-int_{\B_{\tau r}} \V_{|D\ell_{r}|} \left( |D\ell_{r}| \left|\frac{h-\ell^{(h)}}{\tau r}\right|\right)\, dx \leq c\-int_{\B_{\tau r}} \V_{|D\ell_{r}|} \left( |D\ell_{r}| \Upsilon \tau \right)\, dx \leq c \Upsilon^{2} \tau^{2} \V(|D\ell_{r}|). 
\end{align*}
Therefore, from the above inequality, \eqref{t1} and \eqref{eq25} we have 
\begin{align*}
&\-int_{\B_{\tau r}} \V_{|D\ell_{r}|} \left( |D\ell_{r}| \left|\frac{w- \ell^{(h)}}{\tau r}\right|\right)\, dx \\
&\quad \leq c \-int_{\B_{\tau r}} \V_{|D\ell_{r}|} \left( |D\ell_{r}| \left|\frac{w-h}{\tau r}\right|\right)\, dx + c \-int_{\B_{\tau r}} \V_{|D\ell_{r}|} \left( |D\ell_{r}| \left|\frac{h- \ell^{(h)}}{\tau r}\right|\right)\, dx\\
&\quad \leq \frac{c}{(4\tau)^{n}} \-int_{\B_{\frac{r}{4}}} \V_{|D\ell_{r}|} \left( |D\ell_{r}| \left|\frac{w-h}{\frac{r}{4}}\right| \frac{\frac{r}{4}}{\tau r}\right)\, dx+ c \V(|D\ell_{r}|) \Upsilon^{2}\tau^{2}\\
&\quad \leq \frac{c}{(4\tau)^{n}} \frac{1}{(4\tau)^{p_{1}}} \-int_{\B_{\frac{r}{4}}} \V_{|D\ell_{r}|} \left( |D\ell_{r}| \left|\frac{w-h}{\frac{r}{4}}\right|\right)\, dx+ c \V(|D\ell_{r}|) \Upsilon^{2}\tau^{2}\\
&\quad \leq c \tau^{-n-p_{1}} \e^{s} \Upsilon^{2} \V(|D\ell_{r}|)+ c \V(|D\ell_{r}|) \Upsilon^{2}\tau^{2}. 
\end{align*}
Taking $\e^{s}\leq \tau^{n+p_{1}+2}$ we get
\begin{align*}
\-int_{\B_{\tau r}} \V_{|D\ell_{r}|} \left( |D\ell_{r}| \left|\frac{w- \ell^{(h)}}{\tau r}\right|\right)\, dx \leq c \V(|D\ell_{r}|)\Upsilon^{2} \tau^{2} = c\, \Phi^{*}(r)\tau^{2}.
\end{align*}
On the other hand, by Lemma \ref{minaffine},
\begin{align*}
\-int_{\B_{\tau r}} \V_{|D\ell_{r}|} \left( |D\ell_{r}| \left|\frac{w- \ell^{(h)}}{\tau r}\right|\right)\, dx&= \-int_{\B_{\tau r}} \V_{|D\ell_{r}|} \left( |D\ell_{r}| \left| \frac{u-\ell_{r}}{c_0 |D\ell_{r}|} - \ell^{(h)} \right|\frac{1}{\tau r}\right)\, dx \\
&= \-int_{\B_{\tau r}} \V_{|D\ell_{r}|} \left(\frac{|u-\ell_{r} - \ell^{(h)} c_0 |D\ell_{r}||}{c_0\tau r}\right)\, dx \\
&\geq c \-int_{\B_{\tau r}} \V_{|D\ell_{r}|} \left(\frac{|u-\ell_{r} - \ell^{(h)} c_0 |D\ell_{r}||}{\tau r}\right)\, dx \\
&\geq  c \-int_{\B_{\tau r}} \V_{|D\ell_{r}|} \left(\frac{|u- \ell_{\tau r}|}{\tau r}\right)\, dx.   
\end{align*}
Therefore 
\begin{align*}
\-int_{\B_{\tau r}} \V_{|D\ell_{r}|} \left(\frac{|u- \ell_{\tau r}|}{\tau r}\right)\, dx \leq c\, \Phi^{*}(r)\tau^{2}.
\end{align*}
Let us point out that $|D\ell_{r}|\sim |D\ell_{\tau r}|$. Indeed, from \eqref{minimizza}, \eqref{eq23} and \eqref{t4} we obtain
\begin{align*}
|D\ell_{r}- D\ell_{\tau r}| &\leq (n+2) \-int_{\B_{\tau r}} \left|\frac{u- \ell_{r}}{\tau r}\right|\, dx \leq \frac{n+2}{\tau^{n+1}}\-int_{\B_{r}} \left|\frac{u- \ell_{r}}{r}\right|\, dx\\
&= \frac{n+2}{\tau^{n+1}} (\V_{|D\ell_{r}|})^{-1} \circ \V_{|D\ell_{r}|} \left( \-int_{\B_{r}} \left|\frac{u- \ell_{r}}{r}\right|\, dx \right) \\
&\leq \frac{n+2}{\tau^{n+1}} (\V_{|D\ell_{r}|})^{-1} \left( \-int_{\B_{r}} \V_{|D\ell_{r}|} \left( \left|\frac{u- \ell_{r}}{r}\right|\right)\, dx \right)\\
&= \frac{n+2}{\tau^{n+1}} (\V_{|D\ell_{r}|})^{-1} \left( \Phi(r)\right) \\
&\leq \frac{n+2}{\tau^{n+1}} (\V_{|D\ell_{r}|})^{-1} (\delta_{1} \V(|D\ell_{r}|)) \\
&\leq \frac{n+2}{\tau^{n+1}} \delta_{1}^{\frac{1}{p_{1}}} (\V_{|D\ell_{r}|})^{-1} (4\V_{|D\ell_{r}|}(|D\ell_{r}|))\\
&\le 4\frac{n+2}{\tau^{n+1}} \delta_{1}^{\frac{1}{p_{1}}} |D\ell_{r}|\leq \frac{1}{2} |D\ell_{r}|
\end{align*}
considering that $\delta_{1} \leq \left(\frac{\tau^{n+1}}{2(n+2)}\right)^{p_{1}}$. 
Hence
\begin{align*}
|D\ell_{\tau r}|\leq |D\ell_{\tau r} - D\ell_{r}|+ |D\ell_{r}|\leq \frac{3}{2} |D\ell_{r}|. 
\end{align*}
Moreover
\begin{align*}
|D\ell_{r}|\leq |D\ell_{r}- D\ell_{\tau r}|+ |D\ell_{\tau r}|\leq \frac{1}{2}|D\ell_{r}| + |D\ell_{\tau r}| 
\end{align*}
and so $\frac{1}{2}|D\ell_{r}|\leq |D\ell_{\tau r}|$, from which $|D\ell_{r}|\sim |D\ell_{\tau r}|$ and $\V_{|D\ell_{\tau r}|}(\cdot)\sim \V_{|D\ell_{r}|}(\cdot)$. Combining the previous estimates we can conclude that 
\begin{align*}
\-int_{\B_{\tau r}} \V_{|D\ell_{\tau r}|} \left( \frac{|u-\ell_{\tau r}|}{\tau r}\right) \, dx \leq c\, \Phi^{*}(r)\tau^{2}, 
\end{align*}
that is 
\begin{align*}
\Phi(\tau r)\leq c_2\, \Phi^{*}(r)\tau^{2}. 
\end{align*}
\end{proof}

\noindent
In what follows we will iterate the excess-decay estimate from Lemma \ref{excessdecay}. We define 
%\begin{align*}
%C_{0}(r)=\-int_{\B_{r}(x_{0})} |u-(u)_{x_{0}, r}| \, dx + r \left( \-int_{\B_{r}(x_{0})} \V \left( \frac{|u-(u)_{x_{0}, r}|}{r}\right)\, dx \right)^{\frac{1}{p_{0}}}
%\end{align*}
%and 
\begin{align*}
\Psi_{\alpha}(x_{0}, r)= r^{p_{0}(1-\alpha)} \-int_{\B_{r}(x_{0})} \V\left(\frac{|u-(u)_{x_{0}, r}|}{r} \right) \, dx \quad \mbox{ for } \alpha \in (0,1).
\end{align*}
\begin{lem}\label{iterazione} 
Let $\alpha \in (0, 1)$; then, there exist constants $\mu_{*},\kappa_{*}\in (0, 1)$, $r_{*}\in (0, \rho_{0})$ and $\tau \in (0, \frac{1}{8}]$ such that the conditions 
\begin{align}\label{assitera}
\frac{\Phi(x_{0}, r)}{\V(|D\ell_{x_{0}, r}|)}< \mu_{*} \quad \mbox{ and } \quad  \Psi_{\alpha}(x_{0}, r)<\kappa_{*}
\end{align}
for $\B_{r}(x_{0})\subset \Omega$ with $r\leq r_{*}$, imply 
\begin{align}\label{thesiitera}
\frac{\Phi(x_{0}, \tau^{j} r)}{\V(|D\ell_{x_{0}, \tau^{j}  r}|)}< \mu_{*} \quad \mbox{ and } \quad  \Psi_{\alpha}(x_{0},\tau^{j}  r)<\kappa_{*}
\end{align}
for every $j\in \N_{0}$.
\end{lem}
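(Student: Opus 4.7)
The approach is an induction on $j\ge 0$ in which the parameters $\tau,\mu_*,\kappa_*,r_*$ are fixed in a specific order so that both bounds in (\ref{thesiitera}) are self-reproducing. Write $\rho_k:=\tau^k r$. The base case $j=0$ is precisely (\ref{assitera}); assuming (\ref{thesiitera}) at all indices $\le j$, I plan to verify it for $j+1$ by applying Lemma \ref{excessdecay} at scale $\rho_j$. The hypothesis $\Phi(\rho_j)/\V(|D\ell_{\rho_j}|)<\delta_1$ of that lemma is immediate once $\mu_*\le\delta_1$. To bound $\h(\rho_j)^{\beta_1}$, observe that $\ell_{\rho_j}(x_0)=(u)_{\rho_j}$, so by Jensen's inequality and (\ref{t4}),
$$\-int_{\B_{\rho_j}(x_0)}|u-\ell_{\rho_j}(x_0)|\,dx\le \rho_j\,\V^{-1}\!\left(\frac{\Psi_\alpha(\rho_j)}{\rho_j^{p_0(1-\alpha)}}\right)\le \rho_j^\alpha\V^{-1}(\kappa_*)\le r_*^\alpha\V^{-1}(\kappa_*).$$
Together with the monotonicity of ${\rm W}$, this yields $\h(\rho_j)\le{\rm W}(r_*)+\omega(r_*^\alpha\V^{-1}(\kappa_*))+r_*$, which by the VMO property of ${\rm W}$ and the continuity of $\omega$ at $0$ can be made $\le\min\{\delta_2,\mu_*\}^{1/\beta_1}$ by choosing $r_*$ small after $\kappa_*$ and $\mu_*$ are fixed.

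For the first half of (\ref{thesiitera})$_{j+1}$, I combine the conclusion of Lemma \ref{excessdecay} with the relation $|D\ell_{\rho_{j+1}}|\ge\tfrac12|D\ell_{\rho_j}|$ established in its proof, which gives $\V(|D\ell_{\rho_j}|)\le 2^{p_1}\V(|D\ell_{\rho_{j+1}}|)$:
$$\frac{\Phi(\rho_{j+1})}{\V(|D\ell_{\rho_{j+1}}|)}\le c_2\tau^2 2^{p_1}\left[\frac{\Phi(\rho_j)}{\V(|D\ell_{\rho_j}|)}+\h(\rho_j)^{\beta_1}\right]\le 2c_2 2^{p_1}\tau^2\mu_*\le\mu_*,$$
where the last step uses the first smallness condition on $\tau$, namely $2c_2 2^{p_1}\tau^2\le 1$.

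For the second half, I use $|u-(u)_{\rho_{j+1}}|\le|u-\ell_{\rho_{j+1}}|+|D\ell_{\rho_{j+1}}|\rho_{j+1}$ together with $\V(a+t)\sim\V_a(t)+\V(a)$ to bound
$$\Psi_\alpha(\rho_{j+1})\le C\rho_{j+1}^{p_0(1-\alpha)}\bigl[\Phi(\rho_{j+1})+\V(|D\ell_{\rho_{j+1}}|)\bigr]\le 2C\rho_{j+1}^{p_0(1-\alpha)}\V(|D\ell_{\rho_{j+1}}|).$$
Iterating $|D\ell_{\rho_{k+1}}|\le\tfrac32|D\ell_{\rho_k}|$ yields $|D\ell_{\rho_{j+1}}|\le(3/2)^{j+1}|D\ell_r|$, while the reverse Jensen-type estimate (from (\ref{minimizza}) with $Q=0,\xi=(u)_r$ and (\ref{t1})) gives $r^{p_0(1-\alpha)}\V(|D\ell_r|)\le(n+2)^{p_1}\Psi_\alpha(r)\le(n+2)^{p_1}\kappa_*$. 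Setting $\theta:=\tau^{p_0(1-\alpha)}(3/2)^{p_1}$ produces
$$\Psi_\alpha(\rho_{j+1})\le 2C(n+2)^{p_1}\theta^{j+1}\kappa_*\le\kappa_*,$$
provided $\tau$ satisfies the second smallness condition $\theta\le(2C(n+2)^{p_1})^{-1}$ (which in particular makes $\theta<1$, so all subsequent powers $\theta^{j+1}$ are controlled by $\theta$).

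The main obstacle is the potentially exponential growth of $|D\ell_{\rho_j}|$, by a factor up to $3/2$ per step, which is compensated by the shrinking weight $\rho_j^{p_0(1-\alpha)}$ only when $\tau$ is chosen small enough to force $\theta<1$; note that this requirement degenerates as $\alpha\to 1$. The parameters must therefore be fixed in the order: first $\tau$ (small, depending on $\alpha,p_0,p_1$ and the constants $c_2,C$), then $\mu_*:=\delta_1(\tau)$ as in Lemma \ref{excessdecay}, then any $\kappa_*\in(0,1)$, and finally $r_*$ small enough that ${\rm W}(r_*)+\omega(r_*^\alpha\V^{-1}(\kappa_*))+r_*$ remains uniformly below $\min\{\delta_2,\mu_*\}^{1/\beta_1}$ along the whole iteration.
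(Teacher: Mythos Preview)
Your argument is correct and follows the same inductive scheme as the paper's. The two proofs differ only in how $\Psi_\alpha(\rho_{j+1})$ is controlled. The paper compares directly to the previous scale: it uses the bound $\V(|D\ell_{\rho_j}|)\le(n+2)^{p_1}\rho_j^{-p_0(1-\alpha)}\Psi_\alpha(\rho_j)$ (valid at every step by \eqref{minimizza} and Jensen), which yields a one-step recursion $\Psi_\alpha(\rho_{j+1})\le\big[\bar c\,\tau^{p_0(1-\alpha)-p_1-n}(n+2)^{p_1}\mu_*+\bar c\,\tau^{p_0(1-\alpha)}(n+2)^{p_1}\big]\Psi_\alpha(\rho_j)$; closing this requires both a smallness condition on $\tau$ and an additional condition $\mu_*\le\tau^{p_1+n-p_0(1-\alpha)}/(2\bar c(n+2)^{p_1})$. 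You instead iterate $|D\ell_{\rho_{k+1}}|\le\tfrac32|D\ell_{\rho_k}|$ back to the initial scale and absorb the geometric growth $(3/2)^{(j+1)p_1}$ into the decay of $\tau^{(j+1)p_0(1-\alpha)}$. This costs you strong induction (you need the first bound at all earlier steps, not just step $j$) and a slightly stronger condition on $\tau$, but it dispenses with the extra smallness requirement on $\mu_*$. Both routes are valid; the paper's is a bit cleaner since it needs only the immediately preceding step, while yours keeps the roles of $\tau$ and $\mu_*$ more decoupled.
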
 

\begin{proof}
Assume $x_{0}=0$ and let us choose the constants. First we let
$$
\tau \leq \min \left\{ \frac{1}{8}, (2^{p_{1}+1}c_2)^{-\frac{1}{2}}, ( 2(n+2)^{p_{1}} \bar{c})^{-\frac{1}{p_{0}(1-\alpha)}}\right\}
$$
with $c_2$ determined in Lemma \ref{excessdecay} and $\bar{c}$ determined next in \eqref{cbar}. This fixes the constant $\delta_0$ of Theorem \ref{approxdlsv} (since $\e$ has been chosen such that $\e^{s}\leq \tau^{n+p_{1}+2}$). Next we choose
$$
\mu_{*}\leq \min \left\{ \left( \frac{\tau^{n+1}}{8(n+2)}\right)^{p_{1}}, \left(\frac{\delta_{0}}{4}\right)^{\frac{2}{\beta_0}},\frac{\tau^{p_{1}+n-p_{0}(1-\alpha)}}{2\bar{c} (n+2)^{p_{1}}}\right\}. 
$$
and $\kappa_*$ and $r_*$ so small that
\begin{align*}
\left[{\rm W}(r_{*}) +r_*+ \omega \left( r_{*}+ {\tilde c}r_{*}^{\alpha} \kappa_*^{\frac{1}{p_0}}\right) \right]^{\beta_{1}}\leq \min\left\{\left(\frac{\delta_{0}}{2}\right)^{2},\mu_*\right \},
\end{align*}
with ${\tilde c}=\frac{1}{\V(1)^{\frac{1}{p_0}}}$.

\noindent Now we prove the lemma by induction on $j$. If $j=0$ then \eqref{thesiitera} is implied by \eqref{assitera}. Assume that \eqref{thesiitera} is verified by $j\in \N_{0}$ and let us prove it for $j+1$. 
We first notice that H\"older inequality together with \eqref{t1} produces 
\begin{align*}
\-int_{\B_{r}} |u-(u)_{r}|\, dx &= \frac{1}{|\B_{r}|} \int_{\B_{r}\cap \left\{\frac{|u-(u)_{r}|}{r}\leq 1\right\}} |u-(u)_{r}|\, dx+ \frac{1}{|\B_{r}|} \int_{\B_{r}\cap \left\{\frac{|u-(u)_{r}|}{r}> 1\right\}} |u-(u)_{r}|\, dx \\
&\leq r + \frac{r}{|\B_{r}|} \int_{\B_{r}\cap \left\{\frac{|u-(u)_{r}|}{r}> 1\right\}} \frac{|u-(u)_{r}|}{r}\, dx \\
&\leq r+ \frac{r}{|\B_{r}|} \left( \int_{\B_{r}\cap \left\{\frac{|u-(u)_{r}|}{r}> 1\right\}} \left|\frac{u-(u)_{r}}{r}\right|^{p_{0}}\, dx\right)^{\frac{1}{p_{0}}} |\B_{r}|^{1-\frac{1}{p_{0}}}\\
&= r+ \frac{r}{|\B_{r}|^{\frac{1}{p_{0}}}} \left( \int_{\B_{r}\cap \left\{\frac{|u-(u)_{r}|}{r}> 1\right\}} \left|\frac{u-(u)_{r}}{r}\right|^{p_{0}} \V(1) \, dx\right)^{\frac{1}{p_{0}}} \frac{1}{\V(1)^{\frac{1}{p_{0}}}}\\
&\leq r+ \frac{r}{\V(1)^{\frac{1}{p_{0}}}} \left( \-int_{\B_{r}} \V\left(\frac{|u-(u)_{r}|}{r}\right)\, dx \right)^{\frac{1}{p_{0}}},
\end{align*}
so from the definition of $\Psi_{\alpha}$ we infer
$$
{\mathcal H}(r)\leq W(r)+r+ \omega(r+{\tilde c}\,r^\alpha\Psi_\alpha(r)^{\frac{1}{p_0}}).
%r \left( \-int_{\B_{r}} \V\left(\frac{|u-(u)_{r}|}{r}\right)\, dx \right)^{\frac{1}{p_{0}}} = 2r+ cr^{\alpha} \Psi_{\alpha}(r)^{\frac{1}{p_{0}}}. 
$$
\noindent Therefore, recalling that $r\leq r_{*}$ and $\tau \leq 1$, and by the inductive hypothesis on $\Psi_\alpha$, we have  
\begin{align*}
\h(\tau^{j} r) &\le{\rm W}(\tau^{j} r) +\tau^{j} r+ \omega\left( \tau^{j} r+ {\tilde c}(\tau^{j}r)^{\alpha} \Psi_{\alpha}(\tau^{j}r)^{\frac{1}{p_{0}}} \right)\\
&\leq {\rm W}(r_{*}) +r_*+ \omega\left( r_{*}+ {\tilde c}r_{*}^{\alpha} \kappa_*^{\frac{1}{p_{0}}} \right).
\end{align*}
\noindent Using the previous choice of constants and Lemma \ref{excessdecay}, we thus have
\begin{align}\label{eq27}
\Phi(\tau^{j+1} r) \leq c_1 \tau^{2} \Phi^{*}(\tau^{j}r) \leq 2c_1 \tau^{2} \mu_{*} \V(|D\ell_{\tau^{j}r}|). 
\end{align}
Let us consider
\begin{align*}
|D\ell_{\tau^{j}r}-D\ell_{\tau^{j+1}r} |&\leq \frac{n+2}{\tau^{j+1}r}\-int_{\B_{\tau^{j+1}r}} |u-\ell_{\tau^{j}r}|\, dx\\
&= \frac{n+2}{\tau}\frac{1}{(\tau^{j+1}r)^{n}\omega_{n}} \int_{\B_{\tau^{j+1}r}} \frac{|u-\ell_{\tau^{j}r}|}{\tau^{j}r}\, dx\\
&\leq \frac{n+2}{\tau^{n+1}} \-int_{\B_{\tau^{j}r}} \frac{|u-\ell_{\tau^{j}r}|}{\tau^{j}r}\, dx \\
&= \frac{n+2}{\tau^{n+1}} (\V_{|D\ell_{\tau^{j}r}|})^{-1} \circ \V_{|D\ell_{\tau^{j}r}|} \left(\-int_{\B_{\tau^{j}r}} \frac{|u-\ell_{\tau^{j}r}|}{\tau^{j}r}\, dx\right)\\
&\leq \frac{n+2}{\tau^{n+1}} (\V_{|D\ell_{\tau^{j}r}|})^{-1} \left(\-int_{\B_{\tau^{j}r}} \V_{|D\ell_{\tau^{j}r}|}\left(\frac{|u-\ell_{\tau^{j}r}|}{\tau^{j}r}\right)\, dx\right)\\
&=\frac{n+2}{\tau^{n+1}} (\V_{|D\ell_{\tau^{j}r}|})^{-1} \left( \Phi(\tau^{j}r)\right) \\
&\leq \frac{n+2}{\tau^{n+1}}( \V_{|D\ell_{\tau^{j}r}|})^{-1} \left( \mu_{*}\V(|D\ell_{\tau^{j}r}|)\right)\\
&\leq \frac{n+2}{\tau^{n+1}} \mu_{*}^{\frac{1}{p_{1}}}\V_{|D\ell_{\tau^{j}r}|}^{-1} \left(\V(|D\ell_{\tau^{j}r}|)\right)\le\frac{n+2}{\tau^{n+1}} \mu_{*}^{\frac{1}{p_{1}}}(\V_{|D\ell_{\tau^{j}r}|})^{-1}
 \left(4\V_{|D\ell_{\tau^{j}r}|}(|D\ell_{\tau^{j}r}|)\right)\\
&\le 4\frac{n+2}{\tau^{n+1}} \mu_{*}^{\frac{1}{p_{1}}}|D\ell_{\tau^{j}r}|\leq \frac{1}{2}|D\ell_{\tau^{j}r}|
\end{align*}
where we used \eqref{assitera} and the choice of $\mu_{*}$. 
This implies that
\begin{align*}
|D\ell_{\tau^{j}r}|\leq |D\ell_{\tau^{j}r}-D\ell_{\tau^{j+1}r}|+ |D\ell_{\tau^{j+1}r}| \leq \frac{1}{2}|D\ell_{\tau^{j}r}|+ |D\ell_{\tau^{j+1}r}|
\end{align*}
and so
\begin{align}\label{eq28}
|D\ell_{\tau^{j}r}| \leq 2 |D\ell_{\tau^{j+1}r}|.
\end{align}
Therefore, combining \eqref{eq27} and \eqref{eq28}, and taking into account \eqref{t1} and the choice of $\tau$, we have
\begin{align*}
\Phi(\tau^{j+1}r)&\leq 2 c_1\tau^{2}\mu_{*} \V(|D\ell_{\tau^{j}r}|) \leq 2 c_1\tau^{2}\mu_{*} \V(2|D\ell_{\tau^{j+1}r}|) \\
&\leq c_1\,\tau^{2}\mu_{*} 2^{p_{1}+1}\V(|D\ell_{\tau^{j+1}r}|) \leq \mu_{*}\V(|D\ell_{\tau^{j+1}r}|).  
\end{align*} 
Now, using $\ell_{\tau^{j}r}(x)= (u)_{\tau^{j}r} + D\ell_{\tau^{j}r}x$, \eqref{ok2.7}, and \eqref{t6}, we can see that
\begin{align*}
\Psi_{\alpha}(\tau^{j+1}r)&= (\tau^{j+1}r)^{p_{0}(1-\alpha)} \-int_{\B_{\tau^{j+1}r}} \V \left( \frac{|u- (u)_{\tau^{j+1}r}|}{\tau^{j+1}r} \right) \, dx \\
&\leq (\tau^{j+1}r)^{p_{0}(1-\alpha)}{c} \,\-int_{\B_{\tau^{j+1}r}} \V \left( \frac{|u- (u)_{\tau^{j}r}|}{\tau^{j+1}r} \right) \, dx \\
&\leq (\tau^{j+1}r)^{p_{0}(1-\alpha)}{c} \,\-int_{\B_{\tau^{j+1}r}} \V \left( \frac{|u- \ell_{\tau^{j}r}|}{\tau^{j+1}r} + \frac{|\ell_{\tau^{j}r}- (u)_{\tau^{j}r}|}{\tau^{j+1}r} \right) \, dx \\
&\leq (\tau^{j+1}r)^{p_{0}(1-\alpha)}{c} \, \-int_{\B_{\tau^{j+1}r}} \V \left( \frac{|u- \ell_{\tau^{j}r}|}{\tau^{j+1}r} +\left| D\ell_{\tau^{j}r}\right| \right) \, dx \\
&\leq (\tau^{j+1}r)^{p_{0}(1-\alpha)}{c} \, \-int_{\B_{\tau^{j+1}r}} \V_{|D\ell_{\tau^{j}r}|} \left(\frac{|u- \ell_{\tau^{j}r}|}{\tau^{j+1}r} \right) \, dx + (\tau^{j+1}r)^{p_{0}(1-\alpha)}{c} \V(|D\ell_{\tau^{j}r}|)\\
&\leq \frac{(\tau^{j+1}r)^{p_{0}(1-\alpha)}{c}}{\tau^{p_{1}+n}} \-int_{\B_{\tau^{j}r}}  \V_{|D\ell_{\tau^{j}r}|} \left(\frac{|u- \ell_{\tau^{j}r}|}{\tau^{j}r} \right) \, dx+ (\tau^{j+1}r)^{p_{0}(1-\alpha)}{c} \V(|D\ell_{\tau^{j}r}|)\\
&=\frac{(\tau^{j+1}r)^{p_{0}(1-\alpha)}{c}}{\tau^{p_{1}+n}} \Phi(\tau^{j}r) + (\tau^{j+1}r)^{p_{0}(1-\alpha)}{c} \V(|D\ell_{\tau^{j}r}|)\\
&\leq \frac{(\tau^{j+1}r)^{p_{0}(1-\alpha)}{c}}{\tau^{p_{1}+n}} \mu_{*} \V(|D\ell_{\tau^{j}r}|) + (\tau^{j+1}r)^{p_{0}(1-\alpha)}{c} \V(|D\ell_{\tau^{j}r}|).  
\end{align*}
Let us note that from 
\begin{align*}
|D\ell_{\tau^{j}r}| \leq \frac{n+2}{\tau^{j}r} \-int_{\B_{\tau^{j}r}} |u-(u)_{\tau^{j}r}| \, dx 
\end{align*}
we get
\begin{align*}
\V(|D\ell_{\tau^{j}r}|) &\leq (n+2)^{p_{1}} \-int_{\B_{\tau^{j}r}} \V\left(\frac{|u-(u)_{\tau^{j}r}|}{\tau^{j}r}\right) \, dx \\
&= (n+2)^{p_{1}} \frac{1}{(\tau^{j}r)^{p_{0}(1-\alpha)}} \Psi_{\alpha}(\tau^{j}r)\leq \frac{(n+2)^{p_{1}}}{(\tau^{j}r)^{p_{0}(1-\alpha)}}\kappa_{*}.
\end{align*}
Hence, by the choice of the constants, we finally get
\begin{equation}\label{cbar}
\begin{split}
\Psi_{\alpha}(\tau^{j+1}r) &\leq \frac{(\tau^{j+1}r)^{p_{0}(1-\alpha)}\bar{c}}{\tau^{p_{1}+n}} \mu_{*} \frac{(n+2)^{p_{1}}}{(\tau^{j}r)^{p_{0}(1-\alpha)}}\kappa_{*} + (\tau^{j+1}r)^{p_{0}(1-\alpha)}\bar{c} \frac{(n+2)^{p_{1}}}{(\tau^{j}r)^{p_{0}(1-\alpha)}}\kappa_{*}\\
&=\bar{c} \tau^{p_{0}(1-\alpha)-p_{1}-n}(n+2)^{p_{1}} \mu_{*}\kappa_{*} +\bar{c}\tau^{p_{0}(1-\alpha)}(n+2)^{p_{1}}\kappa_{*}\leq \kappa_{*}. 
\end{split}
\end{equation}
\end{proof}

\noindent We now establish an excess improvement estimate for the degenerate case. This will be achieved
via the $\V$-harmonic approximation lemma. We will be able to approximate our solution by a $\V$-harmonic function, allowing our solution to inherit the a priori estimates for $\V$-harmonic functions due to \cite{DSV}.

\begin{lem}\label{iterazioneDC} 
Let $\alpha, \mu, \kappa \in (0, 1)$; then, there exist constants $\sigma \in (0,1)$, $r_{\sharp}$ and $\delta_{*}$ such that if 
\begin{align}\label{assiterDC}
\mu \V(|D\ell_{x_{0}, r}|) \leq \Phi(x_{0}, r) \quad \mbox{ and }\quad  \Phi(x_{0}, r)\leq \delta_{*} 
\end{align}
with $r<r_{\sharp}$, then 
\begin{align*}
\Phi(x_{0}, \sigma r)\leq \delta_{*} \quad \mbox{ and } \quad \Psi_{\alpha}(x_{0}, \sigma r)<\kappa. 
\end{align*}
\end{lem}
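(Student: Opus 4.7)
I would work at $x_0=0$ and, in contrast to the non-degenerate decay of Lemma~\ref{excessdecay}, rely on the $\V$-harmonic approximation Theorem~\ref{phiappr} together with the interior regularity for $\V$-harmonic maps in Theorem~\ref{regh}. The point is that the degeneracy hypothesis $\mu\V(|D\ell_{r}|)\le \Phi(r)$ forbids linearization around $D\ell_r$, but on the other hand it allows us to turn any bound on $\Phi$ into a bound on $\int\V(|Du|)$: applying Caccioppoli (Theorem~\ref{Caccioppoli}) with $\ell=\ell_r$, using $\V(|Du|)\lesssim \V_{|D\ell_r|}(|Du-D\ell_r|)+\V(|D\ell_r|)$ from \eqref{t6}, and the hypothesis, one gets
\[
\-int_{\B_{r/2}}\V(|Du|)\,dx\;\le\; C(\mu)\bigl[\Phi(r)+\V(|D\ell_r|)\,\h(r)\bigr]\;\le\; C(\mu)\,\delta_*,
\]
so that in particular $\Psi_1(r/2)\le C(\mu)\delta_*$ provided $r\le r_\sharp$ is small enough to bound $\h(r)$.

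Next I would plug this estimate into Lemma~\ref{lem1DC}: choosing first a small $\delta$, then $r_\sharp$ and $\delta_*$ small enough so that the bracket
\[
c_1\!\left[\delta+\tfrac{1}{\eta(\delta)}\V^{-1}\!\bigl(C(\mu)\delta_*\bigr)+ W(r_\sharp)^{\frac{\gamma-1}{\gamma}}+\omega\bigl(C(\mu,\delta_*)\bigr)^{\frac{\gamma-1}{\gamma}}+r_\sharp\right]
\]
is smaller than the threshold $\delta_0$ furnished by Theorem~\ref{phiappr} (corresponding to some fixed $\e>0$ to be chosen later). This exhibits $u$ as almost $\V$-harmonic on $\B_{r/4}$, and Theorem~\ref{phiappr} produces a $\V$-harmonic $h$ on $\B_{r/4}$ with $h=u$ on $\partial\B_{r/4}$ and
\[
\Bigl(\-int_{\B_{r/4}}|V(Du)-V(Dh)|^{2\theta}\,dx\Bigr)^{1/\theta}\;\le\;\e\-int_{\B_{r/2}}\V(|Du|)\,dx\;\le\;C(\mu)\e\,\Phi(r).
\]
The triangle inequality then gives $\-int_{\B_{r/4}}\V(|Dh|)\le C(\mu)\Phi(r)$, and the sup-estimate \eqref{suph} upgrades this to $\sup_{\B_{r/8}}\V(|Dh|)\le c_*C(\mu)\delta_*$, hence $\sup_{\B_{r/8}}|Dh|\le M=M(\mu,\delta_*)$.

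To conclude with $\Psi_\alpha(\sigma r)<\kappa$, I would pick $\sigma\in(0,1/16)$ and split $u-(u)_{\sigma r}=(u-h)+(h-(h)_{\sigma r})+\bigl((h)_{\sigma r}-(u)_{\sigma r}\bigr)$, use \eqref{ok2.7}, and estimate
\[
\-int_{\B_{\sigma r}}\V\!\Bigl(\tfrac{|u-(u)_{\sigma r}|}{\sigma r}\Bigr)dx\;\le\; c\-int_{\B_{\sigma r}}\V\!\Bigl(\tfrac{|u-h|}{\sigma r}\Bigr)dx+c\,\V(M).
\]
For the first term, Sobolev--Poincar\'e (Theorem~\ref{SP}) applied to $u-h\in W^{1,\V}_0(\B_{r/4})$ combined with Lemma~\ref{shift} and $|V(Du)-V(Dh)|^2\sim \V_{|Du|}(|Du-Dh|)$ gives a bound by $C(\mu)\e\,\Phi(r)$; passing from scale $r/4$ to scale $\sigma r$ costs only a factor $\sigma^{-n-p_1}$. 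Multiplying by the scale factor $(\sigma r)^{p_0(1-\alpha)}$, both contributions can be made smaller than $\kappa/2$ by picking $r_\sharp$ and $\delta_*$ small enough. For the $\Phi(\sigma r)\le\delta_*$ conclusion, Lemma~\ref{minaffine} with the constant competitor $\ell\equiv(u)_{\sigma r}$ reduces the estimate to bounding $\-int_{\B_{\sigma r}}\V_{|D\ell_{\sigma r}|}\!\bigl(|u-(u)_{\sigma r}|/(\sigma r)\bigr)\,dx$, which by Lemma~\ref{shift} and $|D\ell_{\sigma r}|\lesssim M$ is controlled by the same quantities already estimated.

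The main technical obstacle is the simultaneous choice of the three parameters $\sigma$, $r_\sharp$, $\delta_*$: the factor $\sigma^{-n-p_1}$ arising from rescaling the closeness estimate from $\B_{r/4}$ down to $\B_{\sigma r}$ must be absorbed by the smallness $\e$ in the $\V$-harmonic approximation. The consistent order is: fix $\sigma$ from the prescribed $\kappa$ and $\alpha$; choose $\e=\e(\sigma)$ so that the rescaling loss is beaten; determine $\delta_0=\delta_0(\e)$ from Theorem~\ref{phiappr}; and finally select $\delta_*,r_\sharp$ small enough to push the bracket of Lemma~\ref{lem1DC} below $\delta_0$. The degeneracy hypothesis $\mu\V(|D\ell_r|)\le\Phi(r)$ is used throughout precisely to convert the excess $\Phi$ into a genuine Orlicz bound on $\int\V(|Du|)$ without having to treat the term $\V(|D\ell_r|)$ separately.
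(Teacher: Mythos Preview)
Your overall strategy---approximate $\V$-harmonicity via Lemma~\ref{lem1DC}, then apply Theorem~\ref{phiappr} and the interior estimates of Theorem~\ref{regh}---is exactly what the paper does, and your treatment of $\Psi_\alpha(\sigma r)<\kappa$ is essentially correct: the extra factor $(\sigma r)^{p_0(1-\alpha)}$ in $\Psi_\alpha$ absorbs the structural constants coming from $\V(M)\le c_*C(\mu)\delta_*$.

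There is, however, a genuine gap in your argument for $\Phi(\sigma r)\le\delta_*$. Following your split through $(h)_{\sigma r}$, the term $\displaystyle\-int_{\B_{\sigma r}}\V\Bigl(\frac{|h-(h)_{\sigma r}|}{\sigma r}\Bigr)\,dx$ is only bounded by $c\,\V(M)\le c\,c_*C(\mu)\,\delta_*$, where $c,c_*,C(\mu)$ are all fixed structural constants that are $\ge 1$. After Lemma~\ref{minaffine} and Lemma~\ref{shift} you therefore obtain $\Phi(\sigma r)\le \tilde C\,\delta_*$ with $\tilde C>1$, and there is no free small parameter left to force $\tilde C\le 1$: $\sigma$ was already fixed from $\kappa$, $\e$ was fixed from $\sigma$, and making $\delta_*$ or $r_\sharp$ smaller does not improve the \emph{ratio} $\Phi(\sigma r)/\delta_*$. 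In other words, the sup-estimate \eqref{suph} alone is not strong enough for this step.

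The paper closes this gap by using, in addition to \eqref{suph}, the \emph{decay} estimate \eqref{decayh} for the $\V$-harmonic comparison map. Instead of working at the function level, one passes to gradients via Poincar\'e and compares through $(Dh)_{\sigma r}$: writing
\[
\Phi(\sigma r)\le c\!\-int_{\B_{\sigma r}}\!|V(Du)-V(Dh)|^2 + c\!\-int_{\B_{\sigma r}}\!|V(Dh)-V((Dh)_{\sigma r})|^2 + c\!\-int_{\B_{\sigma r}}\!|V((Dh)_{\sigma r})-V(D\ell_{\sigma r})|^2,
\]
the middle term is controlled by \eqref{decayh} and yields a factor $\sigma^{2\alpha_0}$. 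One then shows all three terms are bounded by $c\,\sigma^{2\alpha_0}\!\-int_{\B_{r/2}}\V(|Du|)\le \bar c\,\sigma^{2\alpha_0}(1+\mu^{-1})\,\delta_*$, and the choice $\bar c\,\sigma^{2\alpha_0}(1+\mu^{-1})<1$ gives $\Phi(\sigma r)\le\delta_*$. You should incorporate \eqref{decayh} in precisely this way; the rest of your outline (choice order $\sigma\to\e\to\delta_0\to\delta_*,r_\sharp$) is fine once $\sigma$ is also constrained by this decay condition.
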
 

\begin{proof}
Assume $x_{0}=0$ and let us consider the function $h$, unique solution to 
\begin{align*}
\left\{
\begin{array}{ll}
-\dive \left( \V'(|Dh|) \frac{Dh}{|Dh|}\right) =0 &\mbox{ in } \B_{\frac{r}{4}}, \\
h=u &\mbox{ on } \partial \B_{\frac{r}{4}}. 
\end{array}
\right. 
\end{align*}
Let $\sigma \in \left(0, \frac{1}{4}\right)$ be such that 
\begin{align}\label{sigma}
\bar{c} \sigma^{2\alpha_{0}} \left(1+\frac{1}{\mu}\right)<1\ \ \ \ \ \hbox{ and  } \ \ \ \ \ \sigma^{p_{0}(1-\alpha)} <\frac{\mu}{\mu+1} \frac{1}{\tilde{c}} \kappa,
\end{align}
where $\bar c$ and $\tilde c$ will be defined later.
We set $\e=\sigma^{2\alpha_{0}+n}$ and $\delta_0$ be the corresponding constant from Theorem \ref{phiappr}, with $\alpha_0\in (0,1)$ defined by \eqref{decayh} (relative to $h$); moreover we choose $c_{1}\delta=\frac{1}{3}\delta_{0}$, where $c_1$ and $\delta$ come from Lemma \ref{lem1DC}.

Then, using \eqref{t6} and \eqref{assiterDC}, it yields
\begin{align}\label{DC1}
\Psi_{1}\left(\frac{r}{2}\right)&=\-int_{\B_{\frac{r}{2}}}\V\left(\frac{|u-(u)_{\frac{r}{2}}|}{\frac{r}{2}}\right)\, dx \leq c\-int_{\B_{r}}\V \left( \frac{|u-(u)_{r}|}{r}\right)\, dx \nonumber \\
&\leq c\-int_{\B_{r}} \V\left( \frac{|u-\ell_{r}|}{r}+|D\ell_{r}|\right)\, dx \nonumber \\
&\leq \tilde{c} \left[ \-int_{\B_{r}} \V_{|D\ell_{r}|}\left( \frac{|u-\ell_{r}|}{r}\right)\, dx+ \V(|D\ell_{r}|)\right]  \nonumber \\
&\leq \tilde{c}\, \Phi(r)+ \frac{\tilde{c}}{\mu} \Phi(r) \leq \delta_{*}\tilde{c}\left(1+\frac{1}{\mu}\right),
 \end{align}
hence
\begin{equation}\label{deltastar}
\frac{c_{1}}{\eta(\delta)} \V^{-1}\left(\Psi_{1}\left(\frac{r}{2}\right)\right) \leq \frac{c_{1}}{\eta(\delta)}  \V^{-1}\left( \delta_{*}\tilde{c}\left(1+\frac{1}{\mu}\right)\right)\leq \frac{1}{3}\delta_{0}(\e),
\end{equation}
provided $\delta_*$ is chosen sufficiently small to guarantee \eqref{deltastar}.
We note that from \eqref{DC1}, $\delta_{*}<1$, and $r<r_{\sharp}$ we have also
\begin{align*}
\-int_{\B_{r}} |u-(u)_{r}|\, dx &= r\-int_{\B_{r}} \frac{|u-(u)_{r}|}{r}\, dx = r \V^{-1} \circ \V \left( \-int_{\B_{r}} \frac{|u-(u)_{r}|}{r}\, dx\right) \\
&\leq r \V^{-1} \left( \Psi_{1}(r)\right) \leq r \V^{-1}\left( \delta_{*}\tilde{c}\left(1+\frac{1}{\mu}\right)\right)\\
&\leq r \V^{-1} \left(\tilde{c}\left(1+\frac{1}{\mu}\right)\right) \leq r_{\sharp} c_{*},
\end{align*}
where $c_{*}=\V^{-1} \left(\tilde{c}\left(1+\frac{1}{\mu}\right)\right)$. Now, we choose $r_{\sharp}$ such that 
\begin{align*}
c_{1} \left[ {\rm W}(r_{\sharp})^{\frac{\gamma-1}{\gamma}} + \omega (c_{*}r_{\sharp})^{\frac{\gamma-1}{\gamma}}+r_{\sharp}\right]\leq \frac{1}{3}\delta_{0}. 
\end{align*}
Therefore, from Lemma \ref{lem1DC} we have
\begin{align*}
\left| \-int_{\B_{\frac{r}{4}}} \left( \V'(|Du|)\frac{Du}{|Du|}, D\zeta \right) \, dx \right| \leq \delta_{0} \left[ \-int_{\B_{\frac{r}{2}}} \V(|Du|)\, dx + \V(\|D\zeta\|_{\infty})\right]. 
\end{align*}
Hence we can apply Theorem \ref{phiappr} to assert that the function $h$ satisfies 
\begin{align*}
\left(\-int_{\B_{\frac{r}{4}}} |V(Du)- V(Dh)|^{2\theta} \, dx \right)^{\frac{1}{\theta}} \leq \e \-int_{\B_{\frac{r}{2}}} \V(|Du|)\, dx=\sigma^{2\alpha_{0}+n} \-int_{\B_{\frac{r}{2}}} \V(|Du|)\, dx,  
\end{align*}
recalling the definition of $\e$.
From \cite[Corollary 2.10]{CO} it follows that
\begin{align}\label{DC2}
\-int_{\B_{\frac{r}{4}}} |V(Du)- V(Dh)|^{2} \, dx \leq c\,\sigma^{2\alpha_{0}+n} \-int_{\B_{\frac{r}{2}}} \V(|Du|)\, dx.  
\end{align}
Now, let $\sigma <\frac{1}{4}$. Then, from Poincar\`e's inequality
\begin{align*}
\Phi(\sigma r)&=\-int_{\B_{\sigma r}} \V_{|D\ell_{\sigma r}|} \left( \frac{|u-\ell_{\sigma r}|}{\sigma r}\right)\, dx \\
&\leq c_p \-int_{\B_{\sigma r}} \V_{|D\ell_{\sigma r}|} \left( |Du-D\ell_{\sigma r}|\right)\, dx\\
&\leq c \-int_{\B_{\sigma r}} |V(Du)-V(D\ell_{\sigma r})|^{2}\, dx\\
&\leq c\left[ \-int_{\B_{\sigma r}} |V(Du)-V(Dh)|^{2}\, dx + \-int_{\B_{\sigma r}} |V(Dh)-V((Dh)_{\sigma r})|^{2}\, dx \right. \\
&\qquad \left.+ \-int_{\B_{\sigma r}} |V((Dh)_{\sigma r})- V(D\ell_{\sigma r})|^{2}\, dx \right]=I+I\!I + I\!I\!I. 
\end{align*}
{\it Estimate of $I$.} Using \eqref{DC2}
\begin{align*}
I\leq \frac{c}{\sigma^{n}} \-int_{\B_{\frac{r}{4}}} |V(Du)-V(Dh)|^{2}\, dx \leq c \sigma^{2\alpha_{0}} \-int_{\B_{\frac{r}{2}}} \V(|Du|)\, dx.  
\end{align*}
{\it Estimate of $I\!I$.} From \eqref{decayh} and using the fact that $|V(Dh)|^{2}\sim \V(|Dh|)$ we have 
\begin{align*}
I\!I&\leq c\, \sigma^{2\alpha_{0}} \-int_{\B_{\frac{r}{4}}} |V(Dh) - (V(Dh))_{\frac{r}{4}}|^{2}\, dx \leq c\, \sigma^{2\alpha_{0}} \-int_{\B_{\frac{r}{4}}} \V(|Dh|) \, dx \\
&\leq c\, \sigma^{2\alpha_{0}} \-int_{\B_{\frac{r}{4}}} \V(|Du|) \, dx\leq c\, \sigma^{2\alpha_{0}} \-int_{\B_{\frac{r}{2}}} \V(|Du|) \, dx.
\end{align*}
{\it Estimate of $I\!I\!I$.} From the definition of the $V$-function we have 
\begin{align*}
I\!I\!I\leq c\,\V_{|(Dh)_{\sigma r}|} \left( |D\ell_{\sigma r} - (Dh)_{\sigma r}|\right). 
\end{align*}
By \eqref{minimizza}, we note that 
\begin{align*}
|D\ell_{\sigma r} - (Dh)_{\sigma r}| &\leq \frac{n+2}{\sigma r} \-int_{\B_{\sigma r}} |u-(u)_{\sigma r} -(Dh)_{\sigma r} x|\, dx\leq (n+2) c\, \-int_{\B_{\sigma r}} |Du- (Dh)_{\sigma r}|\, dx,
\end{align*}
hence  it yields
\begin{align*}
I\!I\!I &\leq c \-int_{\B_{\sigma r}} \V_{|(Dh)_{\sigma r}|} \left( |Du - (Dh)_{\sigma r}|\right)\, dx \leq c \-int_{\B_{\sigma r}} |V(Du) - V((Dh)_{\sigma r})|^{2}\, dx \\
&\leq c\left[ \-int_{\B_{\sigma r}} |V(Du) - V(Dh)|^{2}\, dx  + \-int_{\B_{\sigma r}} |V(Dh) - V((Dh)_{\sigma r})|^{2}\, dx \right]\\
&\leq c\, \sigma^{2\alpha_{0}} \-int_{\B_{\frac{r}{2}}} \V(|Du|) \, dx. 
\end{align*}
Combining the previous estimates we obtain 
\begin{align*}
\Phi(\sigma r)\leq c\, \sigma^{2\alpha_{0}} \-int_{\B_{\frac{r}{2}}} \V(|Du|) \, dx. 
\end{align*}
Using Theorem \ref{cacczero}, \eqref{t6}, and \eqref{assiterDC} we get
\begin{align*}
\Phi(\sigma r)&\leq c\, \sigma^{2\alpha_{0}} \-int_{\B_{\frac{r}{2}}} \V(|Du|) \, dx\leq c\, \sigma^{2\alpha_{0}} \-int_{\B_{r}} \V \left( \frac{|u-(u)_{r}|}{r}\right) \, dx\leq c \,\sigma^{2\alpha_{0}} \left[ \-int_{\B_{r}} \V \left( \frac{|u-\ell_{r}|}{r}+|D\ell_{r}|\right)\right]\\
&\leq \bar{c}\, \sigma^{2\alpha_{0}} \left[ \-int_{\B_{r}} \V_{|D\ell_{r}|} \left( \frac{|u-\ell_{r}|}{r}\right) \, dx + \V(|D\ell_{r}|)\right]\leq \bar{c}\, \sigma^{2\alpha_{0}} \left(1+\frac{1}{\mu}\right) \Phi(r)\leq \bar{c} \,\sigma^{2\alpha_{0}} \left(1+\frac{1}{\mu}\right) \delta_{*}\\
&\le  \delta_{*}, 
\end{align*}
according to \eqref{sigma}.
Moreover, from \eqref{t6}, Poincar\'e's inequality, and \eqref{suph}, and proceeding as for the estimates of $I$, $I\!I$, and $I\!I\!I$ we get  
\begin{align*}
\Psi_{\alpha}(\sigma r) &= (\sigma r)^{p_{0}(1-\alpha)} \-int_{\B_{\sigma r}} \V \left( \frac{|u-(u)_{\sigma r}|}{\sigma r}\right)\, dx \\
&\leq (\sigma r)^{p_{0}(1-\alpha)} \-int_{\B_{\sigma r}} \V \left( \frac{|u-(u)_{\sigma r}-(Dh)_{{\sigma r}}x|}{\sigma r} + |(Dh)_{{\sigma r}}|\right)\, dx\\
&\leq c (\sigma r)^{p_{0}(1-\alpha)} \left[\-int_{\B_{\sigma r}} \V_{|(Dh)_{{\sigma r}}|} \left( \frac{|u-(u)_{\sigma r}-(Dh)_{{\sigma r}}x|}{\sigma r}\right)\, dx  + \V\left(|(Dh)_{{\sigma r}}|\right) \right]\\
&\leq c (\sigma r)^{p_{0}(1-\alpha)} \left[\-int_{\B_{\sigma r}} \V_{|(Dh)_{{\sigma r}}|} \left( |Du-(Dh)_{{\sigma r}}|\right)\, dx  + \-int_{\B_{{\sigma r}}} \V(|Dh|)\, dx  \right]\\
&\leq c (\sigma r)^{p_{0}(1-\alpha)} \left[\-int_{\B_{\sigma r}} |V(Du)-V((Dh)_{{\sigma r}})|^{2} \, dx  + \sup_{\B_{{\sigma r}{} }} \V(|Dh|)\, \right]\\
&\leq c (\sigma r)^{p_{0}(1-\alpha)}\! \left[\-int_{\B_{\sigma r}} \!|V(Du)-V(Dh)|^{2} \, dx +\-int_{\B_{\sigma r}} \!|V(Dh)-V((Dh)_{{\sigma r}})|^{2} \, dx +  \-int_{\B_{\frac{r}{2}}} \! \V(|Dh|)\, dx \right]\\
&\leq c (\sigma r)^{p_{0}(1-\alpha)} \sigma^{2\alpha_{0}} \-int_{\B_{\frac{r}{2}}} \V(|Du|)\, dx + c (\sigma r)^{p_{0}(1-\alpha)} \-int_{\B_{\frac{r}{2}}} \V(|Du|)\, dx\\
&\leq c (\sigma r)^{p_{0}(1-\alpha)} \-int_{\B_{\frac{r}{2}}} \V(|Du|)\, dx\leq \tilde{c} (\sigma r)^{p_{0}(1-\alpha)} \left(1+\frac{1}{\mu}\right) \delta_{*}\leq \tilde{c}\sigma^{p_{0}(1-\alpha)} \left(1+\frac{1}{\mu}\right) \leq \kappa,
\end{align*}
according to \eqref{sigma}. This completes the proof of the lemma.
\end{proof}

\noindent In the following lemma we combine the degenerate and the non-degenerate regime.

\begin{lem}\label{lemconclusivo}
Let $\alpha \in (0, 1)$. Then there exist constants $\delta_{*}, \kappa_{*}\in (0, 1)$ and $r_{1}, C$ such that the conditions
\begin{align}\label{DC3}
\Phi(x_{0}, r)<\delta_{*} \quad \mbox{ and } \quad \Psi_{\alpha}(x_{0}, r)<\kappa_{*} 
\end{align}
for $\B_{r}(x_{0}) \subset \Omega$ with $r\leq r_{1}$, imply 
\begin{align*}
\sup_{\rho \in (0, r]} \Psi_{\alpha}(x_{0}, \rho)\leq C. 
\end{align*}
\end{lem}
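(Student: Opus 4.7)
\medskip

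\noindent\textbf{Proof proposal for Lemma \ref{lemconclusivo}.}

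The plan is a dichotomy argument at each dyadic scale together with the two iteration lemmas already proved. Let $\mu_*,\kappa_*,\tau,r_*$ be the constants from Lemma \ref{iterazione} (for the given $\alpha$); feed $\mu=\mu_*$ and $\kappa=\kappa_*$ into Lemma \ref{iterazioneDC} and let $\sigma,r_\sharp,\delta_*$ be the resulting constants. Set $r_1=\min\{r_*,r_\sharp\}$ and shrink $\delta_*,\kappa_*$ further, if necessary, so that the hypotheses of both lemmas may be invoked simultaneously from \eqref{DC3}. Without loss of generality, assume $\sigma\le \tau$.

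I would build inductively a decreasing sequence of scales $\rho_0=r>\rho_1>\rho_2>\cdots\to 0$ along which both excess smallness and the $\Psi_\alpha$-control persist. At step $k$, with $\Phi(x_0,\rho_k)<\delta_*$ and $\Psi_\alpha(x_0,\rho_k)<\kappa_*$, two cases arise. If $\Phi(x_0,\rho_k)<\mu_*\V(|D\ell_{x_0,\rho_k}|)$ (non-degenerate regime), then Lemma \ref{iterazione} applied at $\rho_k$ propagates forever: the scales $\tau^j\rho_k$ automatically satisfy $\Psi_\alpha(x_0,\tau^j\rho_k)<\kappa_*$ for every $j\ge 0$, and the inductive procedure terminates with this geometric tail. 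If instead $\Phi(x_0,\rho_k)\ge\mu_*\V(|D\ell_{x_0,\rho_k}|)$ (degenerate regime), then Lemma \ref{iterazioneDC} with $\mu=\mu_*$ and $\kappa=\kappa_*$ yields $\Phi(x_0,\sigma\rho_k)\le\delta_*$ and $\Psi_\alpha(x_0,\sigma\rho_k)<\kappa_*$, so I set $\rho_{k+1}=\sigma\rho_k$ and continue. Thus along the whole sequence $\{\rho_k\}$ one has $\Psi_\alpha(x_0,\rho_k)<\kappa_*$, with consecutive ratios $\rho_{k+1}/\rho_k\in\{\sigma,\tau\}\subset[\sigma,\tau]$.

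The remaining step is to upgrade this discrete control to all radii $\rho\in(0,r]$. Given such $\rho$, pick the (unique) $k$ with $\sigma\rho_k\le\rho_{k+1}\le\rho<\rho_k$. Using \eqref{ok2.7} with $u_0=(u)_{x_0,\rho_k}$ and enlarging the domain of integration from $\B_\rho(x_0)$ to $\B_{\rho_k}(x_0)$ (at a cost $(\rho_k/\rho)^n\le\sigma^{-n}$), then rescaling the argument of $\V$ via $|u-(u)_{x_0,\rho_k}|/\rho\le \sigma^{-1}|u-(u)_{x_0,\rho_k}|/\rho_k$ together with the upper growth estimate \eqref{t1} (giving a factor $\sigma^{-p_1}$), one obtains
\begin{align*}
\-int_{\B_\rho(x_0)}\V\!\left(\frac{|u-(u)_{x_0,\rho}|}{\rho}\right)dx
\;\le\; \frac{c}{\sigma^{n+p_1}}\-int_{\B_{\rho_k}(x_0)}\V\!\left(\frac{|u-(u)_{x_0,\rho_k}|}{\rho_k}\right)dx.
\end{align*}
Multiplying by $\rho^{p_0(1-\alpha)}\le\rho_k^{p_0(1-\alpha)}$ gives $\Psi_\alpha(x_0,\rho)\le c\,\sigma^{-(n+p_1)}\Psi_\alpha(x_0,\rho_k)\le C$ with $C:=c\,\sigma^{-(n+p_1)}\kappa_*$, which is the claimed uniform bound.

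The main obstacle, in my view, is only bookkeeping: one has to fix $\delta_*$ and $\kappa_*$ small enough that the hypotheses of both Lemma \ref{iterazione} and Lemma \ref{iterazioneDC} are simultaneously available at every step, and that the degenerate hypothesis $\Phi(x_0,\rho_k)\ge\mu_*\V(|D\ell_{x_0,\rho_k}|)$ used in Lemma \ref{iterazioneDC} is compatible with the non-degenerate alternative from Lemma \ref{iterazione} (which is why $\mu=\mu_*$ was chosen above). Once the constants are coordinated in this way, the proof follows the above three-paragraph outline without further technical difficulty.
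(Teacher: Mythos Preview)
Your proposal is correct and follows essentially the same approach as the paper's proof: coordinate the constants of the two iteration lemmas by taking $\mu=\mu_*$ and $\kappa=\kappa_*$ in Lemma \ref{iterazioneDC}, run the degenerate $\sigma$-step until (if ever) the non-degenerate alternative occurs, then hand off to Lemma \ref{iterazione}, and finish with a standard discrete-to-continuous interpolation. The paper presents this as a two-case split (depending on whether the degenerate regime persists along the whole $\sigma$-grid), while you phrase it as a single inductive construction, but the content is identical; the final constant in the paper is $c\,\kappa_*\max\{\sigma^{-(n+p_1)},\tau^{-(n+p_1)}\}$, matching your $c\,\sigma^{-(n+p_1)}\kappa_*$ under the harmless assumption $\sigma\le\tau$.
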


\begin{proof}
Assume $x_{0}=0$. Let $\mu_{*}\in (0,1)$, $\kappa_{*}\in (0,1)$, $r_{*}\in (0, \rho_{0})$ and $\tau \in \left(0, \frac{1}{8}\right)$ be given by Lemma \ref{iterazione}. Let us choose $\mu= \mu_{*}$ and $\kappa= \kappa_{*}$ in Lemma \ref{iterazioneDC}. Set $r_{1}= \min \{r_{\sharp}, r_{*}\}$, and take $r<r_{1}$. 

Define
\begin{align*}
{\mathcal S}= \left\{ k\in \N_{0} \, : \, \mu_{*} \V(|D\ell_{\sigma^{k}r}|)\leq \Phi(\sigma^{k}r) \right\}. 
\end{align*}
We distinguish two cases.  

{\bf First case: ${\mathcal S}= \N_{0}$.} We claim that 
\begin{align}\label{DC4}
\Phi(\sigma^{k}r)<\delta_{*} \quad \mbox{ and }\quad  \Psi_{\alpha}(\sigma^{k}r)<\kappa_{*} \quad  \forall k\in \N_{0}. 
\end{align}
We prove the claim by induction on $k$. If $k=0$ then \eqref{DC4} follows by \eqref{DC3}. Assume that \eqref{DC4} is verified by $k\in \N_{0}$ and let us prove it for $k+1$. By applying Lemma \ref{iterazioneDC} to $\sigma^{k}r$ in place of $r$ we get the thesis. Now we prove that 
\begin{align}\label{DC5}
\sup_{\rho \in (0, r]} \Psi_{\alpha}(\rho)\leq C. 
\end{align}
Let $\rho \in (0, r]$. Then there exists $k\in \N_{0}$ such that 
\begin{align*}
\sigma^{k+1}r <\rho \leq \sigma^{k}r. 
\end{align*}
Then, by Remark \ref{mediamin}, \eqref{t1} and \eqref{DC4} we infer
\begin{align*}
\Psi_{\alpha}(\rho)&= \rho^{p_{0}(1-\alpha)} \-int_{\B_{\rho}} \V \left( \frac{|u-(u)_{\rho}|}{\rho}\right)\, dx \\
&\leq \frac{c(\sigma^{k}r)^{p_{0}(1-\alpha)}}{\sigma^{n} |\B_{\sigma^{k}r}|} \int_{\B_{\rho}} \V \left( \frac{|u-(u)_{\sigma^{k}r}|}{\rho}\right)\, dx \\
&\leq \frac{c(\sigma^{k}r)^{p_{0}(1-\alpha)}}{\sigma^{n}} \-int_{\B_{\sigma^{k}r}} \V \left( \frac{|u-(u)_{\sigma^{k}r}|}{\sigma^{k+1}r}\right)\, dx \\
&\leq \frac{c(\sigma^{k}r)^{p_{0}(1-\alpha)}}{\sigma^{n+p_{1}}} \-int_{\B_{\sigma^{k}r}} \V \left( \frac{|u-(u)_{\sigma^{k}r}|}{\sigma^{k}r}\right)\, dx \\
&= \frac{c}{\sigma^{n+p_{1}}} \Psi_{\alpha}(\sigma^{k}r)\leq \frac{c}{\sigma^{n+p_{1}}} \kappa_{*}. 
\end{align*}
Hence \eqref{DC5} holds true. 

{\bf Second case: ${\mathcal S}\neq \N_{0}$}.
Then, there exists $k_{0}= \min \{\N_{0}\setminus {\mathcal S}\} \in \N_{0}$ such that for any $k=0, \dots, k_{0}-1$ (if $k_{0}\geq 1$) 
\begin{align*}
\mu_{*}\V(|D\ell_{\sigma^{k}r}|)\leq \Phi(\sigma^{k}r) \quad \mbox{ and } \quad \mu_{*}\V(|D\ell_{\sigma^{k_{0}}r}|)> \Phi(\sigma^{k_{0}}r). 
\end{align*}
Proceeding as in the first case, we deduce that for any $k\leq k_{0}$ 
\begin{align}\label{DC6}
\Phi(\sigma^{k}r)<\delta_{*} \quad \mbox{ and }\quad  \Psi_{\alpha}(\sigma^{k}r)<\kappa_{*}. 
\end{align}
Therefore 
\begin{align*}
\frac{\Phi(\sigma^{k_{0}}r)}{\V(|D\ell_{\sigma^{k_{0}}r}|)}<\mu_{*} \quad \mbox{ and } \quad \Psi_{\alpha}(\sigma^{k_{0}}r)<\kappa_{*}. 
\end{align*}
Thus we are in the position to apply Lemma \ref{iterazione} to deduce that
\begin{align*}
\frac{\Phi(\tau^{j}\sigma^{k_{0}}r)}{\V(|D\ell_{\tau^{j}\sigma^{k_{0}}r}|)}<\mu_{*} \quad \mbox{ and } \quad \Psi_{\alpha}(\tau^{j}\sigma^{k_{0}}r)<\kappa_{*} \quad \forall j\in \N_{0}. 
\end{align*}
Let $\rho \in (0, r]$. If $\rho \in \left(\frac{\sigma^{k_{0}}r}{2}, r \right]$ then there exists $k\leq k_{0}$ such that $\sigma^{k+1}r <\rho \leq \sigma^{k}r$. Hence we are in the same position of the first case, and we deduce that 
\begin{align*}
\Psi_{\alpha}(\rho)\leq \frac{c}{\sigma^{n+p_{1}}} \kappa_{*}. 
\end{align*}
Now, if $\rho \in \left(0, \frac{\sigma^{k_{0}}r}{2}\right]$, then there exists $j\in \N_{0}$ such that $\tau^{j+1}\sigma^{k_{0}}r <\rho \leq \tau^{j}\sigma^{k_{0}}r$. Therefore
\begin{align*}
\Psi_{\alpha}(\rho) &=\rho^{p_{0}(1-\alpha)} \-int_{\B_{\rho}} \V \left( \frac{|u-(u)_{\rho}|}{\rho}\right)\, dx \\
&\leq c \frac{(\tau^{j}\sigma^{k_{0}}r)^{p_{0}(1-\alpha)}}{\tau^{n} |\B_{\tau^{j}\sigma^{k_{0}}r}|} \int_{\B_{\rho}} \V \left( \frac{|u-(u)_{\tau^{j}\sigma^{k_{0}}r}|}{\rho}\right)\, dx \\
&\leq \frac{c}{\tau^{n+p_{1}}} \Psi_\alpha(\tau^{j}\sigma^{k_{0}}r) < \frac{c}{\tau^{n+p_{1}}} \kappa_{*}. 
\end{align*}
Then we conclude that 
\begin{align*}
\sup_{\rho \in (0, r]} \Psi_{\alpha}(\rho) \leq c\,\kappa_{*} \max \left\{ \frac{1}{\tau^{n+p_{1}}}, \frac{1}{\sigma^{n+p_{1}}} \right\},
\end{align*}
and this completes the proof of the lemma.
\end{proof}

\section{Proof of the main result}

\noindent
Let $x_{0}$ be such that 
\begin{align*}
&\liminf_{r\ri 0} \-int_{\B_{r}(x_{0})} |V(Du)- (V(Du))_{x_{0}, r}|^{2}\, dx =0,\\
&\limsup_{r\ri 0} \left|\-int_{\B_{r}(x_{0})} Du\, dx \right|<\infty. 
\end{align*}

\noindent
Using Lemma 22 in \cite{DLSV} we have that
\begin{align*}
\liminf_{r\ri 0} \-int_{\B_{r}(x_{0})} \V_{|(Du)_{x_{0}, r}|}(|Du- (Du)_{x_{0}, r}|)\, dx=0.  
\end{align*}

\noindent
Exploiting Poincar\'e's inequality together with \eqref{t6} we infer
\begin{align*}
\Psi_{\alpha}(x_{0}, r)&= r^{p_{0}(1-\alpha)} \-int_{\B_{r}(x_{0})} \V \left( \left| \frac{u-(u)_{x_{0}, r}}{r}\right|\right)\, dx \\
&\leq c\, r^{p_{0}(1-\alpha)} \-int_{\B_{r}(x_{0})} \V(|Du|)\, dx \\
&\leq c\, r^{p_{0}(1-\alpha)} \-int_{\B_{r}(x_{0})} \V(|Du - (Du)_{x_{0}, r}| + |(Du)_{x_{0}, r}|)\, dx \\
&\leq c\, r^{p_{0}(1-\alpha)} \-int_{\B_{r}(x_{0})} \V_{|(Du)_{x_{0}, r}|}(|Du - (Du)_{x_{0}, r}|)\, dx + c\,r^{p_{0}(1-\alpha)} \V(|(Du)_{x_{0}, r}|). 
\end{align*}
Furthermore, using Lemma \ref{shift}, we get
\begin{align*}
\Phi(x_{0}, r) &=\-int_{\B_{r}} \V_{|(D\ell)_{x_{0}, r}|} \left( \frac{|u-\ell_{x_{0}, r}|}{r}\right) \, dx \\
&\leq c_{P} \-int_{\B_{r}} \V_{|(D\ell)_{x_{0}, r}|} \left( |Du-(Du)_{x_{0}, r}|\right) \, dx \\
&\leq c \-int_{\B_{r}} \V_{|(Du)_{x_{0}, r}|} \left( |Du-(Du)_{x_{0}, r}|\right) \, dx + c \V_{|(D\ell)_{x_{0}, r}|} \left( |(Du)_{x_{0}, r}-(D\ell)_{x_{0}, r}|\right) \\
&\leq c \-int_{\B_{r}} \V_{|(Du)_{x_{0}, r}|} \left( |Du-(Du)_{x_{0}, r}|\right) \, dx + c \V_{|(Du)_{x_{0}, r}|} \left( |(Du)_{x_{0}, r}-D\ell_{x_{0}, r}|\right) \\
&\leq c \-int_{\B_{r}} \V_{|(Du)_{x_{0}, r}|} \left( |Du-(Du)_{x_{0}, r}|\right) \, dx.  
\end{align*}
Keeping in mind the choice of $x_0$, these estimates lead to the existence of a radius $\rho<r_{1}$ such that 
\begin{align*}
\Phi(x_{0}, \rho)<\delta_{*} \quad \mbox{ and } \quad \Psi_{\alpha}(x_{0}, \rho)<\kappa_{*},
\end{align*}
where $r_1$, $\delta_*$, and $\kappa_*$ are given in Lemma \ref{lemconclusivo}.
The absolute continuity of the integral entails the existence of a neighborhood $U$ of $x_{0}$ such that:
\begin{align*}
\Phi(x, \rho)<\delta_{*} \quad \mbox{ and } \quad \Psi_{\alpha}(x, \rho)<\kappa_{*} \quad \forall x\in U.  
\end{align*}
So, by Lemma \ref{lemconclusivo} we get
\begin{align*}
\sup_{x\in U, \sigma\in (0, \rho]} \Psi_{\alpha}(x, \sigma)<\infty,
\end{align*}
from which we finally deduce
\begin{align*}
&\sigma^{p_{0}(1-\alpha)} \-int_{\B_{\sigma}(x)} \left| \frac{u-(u)_{x, \sigma}}{\sigma}\right|^{p_{0}}\, dx \\
&= \frac{\sigma^{p_{0}(1-\alpha)}}{|\B_{\sigma}|} \int_{\B_{\sigma}\cap \left\{\left| \frac{u-(u)_{x, \sigma}}{\sigma}\right|\leq 1 \right\} } \left| \frac{u-(u)_{x, \sigma}}{\sigma}\right|^{p_{0}}\, dx + \frac{\sigma^{p_{0}(1-\alpha)}}{|\B_{\sigma}|\V(1)} \int_{\B_{\sigma}\cap \left\{\left| \frac{u-(u)_{x, \sigma}}{\sigma}\right|> 1 \right\} } \left| \frac{u-(u)_{x, \sigma}}{\sigma}\right|^{p_{0}} \V(1)\, dx \\
&\leq \sigma^{p_{0}(1-\alpha)} + \frac{\sigma^{p_{0}(1-\alpha)}}{|\B_{\sigma}|\V(1)} \int_{\B_{\sigma}} \V\left(\left| \frac{u-(u)_{x, \sigma}}{\sigma}\right|\right) \, dx,
\end{align*}
and 
\begin{align*}
\sup_{x\in U, \sigma \in (0, \rho)} \frac{1}{\sigma^{n+p_{0}\alpha}} \int_{\B_{\sigma}(x)} |u-(u)_{x, \sigma}|^{p_{0}}\, dx <\infty.
\end{align*}
Using Campanato's characterization of H\"older continuous functions we eventually conclude that $u\in \C^{0, \alpha}(U, \R^{N})$.

\addcontentsline{toc}{section}{\refname}

\end{document}